\author{Luca Rossi\\
\small Dipartimento di Matematica,
\small Università di Padova,\\
\small Via Trieste 63, 35121 Padova, Italy\\
\small \texttt{lucar@math.unipd.it}\\
\and
Andrea Tellini\\
\small Centre d'Analyse et de Math\'ematiques Sociales,\\
\small \'Ecole des Hautes \'Etudes en Sciences Sociales\\
\small 190-198 avenue de France, 75244 Paris Cedex 13, France\\
\small \texttt{andrea.tellini@ehess.fr}
\and
Enrico Valdinoci\\
\small Weierstra{\ss} Institut f\"ur Angewandte Analysis und Stochastik\\
\small Mohrenstra{\ss}e 39, 10117 Berlin, Germany\\
\small \texttt{enrico.valdinoci@wias-berlin.de}
}
\title{\textbf{The effect on Fisher-KPP propagation in a cylinder with fast diffusion on the boundary}}
\date{\today}
\renewcommand{\theequation}{\arabic{section}.\arabic{equation}}
\theoremstyle{plain}
\newtheorem{theorem}{Theorem}[section]
\newtheorem{proposition}[theorem]{Proposition}
\newtheorem{lemma}[theorem]{Lemma}
\newtheorem{corollary}[theorem]{Corollary}
\theoremstyle{definition}
\newtheorem{definition}[theorem]{Definition}
\newtheorem{remark}[theorem]{Remark}
\theoremstyle{remark}
\DeclareMathOperator{\KPP}{KPP}
\DeclareMathOperator{\real}{Re}
\DeclareMathOperator{\im}{Im}
\DeclareMathOperator{\repart}{Re}
\DeclareMathOperator{\impart}{Im}
\DeclareMathOperator{\Arg}{Arg}
\DeclareMathOperator{\supp}{supp}
\newcommand{\field}[1] {\mathbb{#1}}
\newcommand{\N}{\field{N}}
\newcommand{\R}{\field{R}}
\newcommand{\C}{\field{C}}
\newcommand{\cKPP}{c_{\KPP}}
\def\a{\alpha}
\def\b{\beta}
\def\e{\varepsilon}
\def\D{\Delta}
\def\d{\delta}
\def\g{\gamma}
\def\G{\Gamma}
\def\l{\lambda}
\def\m{\mu}
\def\O{\Omega}
\def\p{\partial}
\def\r{\rho}
\def\Si{\Sigma}
\def\ov{\overline}
\def\un{\underline}
\def\ua{\uparrow}
\def\da{\downarrow}
\newcommand{\mc}{\mathcal}
\begin{document}
\maketitle

\begin{abstract}
In this paper we consider a reaction-diffusion equation of Fisher-KPP type 
inside an infinite cylindrical domain in $\R^{N+1}$, coupled with a 
reaction-diffusion equation on the boundary of the domain, where potentially 
fast diffusion is allowed. We will study the existence of an asymptotic speed of 
propagation for solutions of the Cauchy problem associated with such system, 
as well as the dependence of this speed on the diffusivity at the 
boundary and the amplitude of the cylinder. 

When $N=1$ the domain reduces to a strip between two straight 
lines. This models the effect of two roads with fast diffusion on a strip-shaped 
field bounded by them.
\end{abstract}

\smallskip
\noindent \textbf{Keywords:} KPP equations, reaction-diffusion systems,
different spatial dimensions, asymptotic speed of spreading.

\smallskip
\noindent \textbf{2010 MSC:} 35K57, 35B40, 35K40, 35B53.

\setcounter{equation}{0}
\setcounter{figure}{0}
\section{Introduction}
\label{section1}
Recently, in \cite{BRR1,BRR2,BRR3}, the authors introduced a model to describe 
the effect of a road with potentially fast diffusion in a field which was 
assumed to be the (half) plane.
The scope of the present paper is to provide a rigorous mathematical framework 
for the problem of the propagation of biological species or substances in presence of more general domains, with potentially fast diffusion on their boundary.

In particular we will consider the case of diffusion inside a \emph{$(N+1)$-dimensional cylinder} $\O=\R\times B_N(0,R)$, where $B_N(0,R)$ is the $N$-dimensional ball of radius $R>0$ centered at the origin. In this case the walls, which may favor the process of diffusion, play the role of a \emph{pipe}.

In the specific case $N=1$, this situation models \emph{two parallel roads} of potentially fast diffusion bounding a \emph{field} described be a two-dimensional slab $\O=\R\times(-R,R)$. Such spreading heterogeneities have been tracked in several real environments, such as the diffusion of wolves along seismic lines in the Western Canadian Forest (\cite{McK}) or the early spreading of HIV among humans in Democratic Republic of Congo (\cite{Faria}), just to mention a few ones (see \cite{BRR1} for a more detailed list of empirical observations).

In our model, roughly speaking, a species, whose density will be denoted by $v$, occupies the region $\Omega$, where it reproduces, diffuses and dies, according to a logistic law. The logistic law is governed by a function~$f$ (as a model case, the reader may think about the case of~$f(v)=v(1-v)$, but more general logistic laws will be allowed as well in this paper).

The population diffuses randomly inside the field, with a diffusion 
coefficient~$d$. 
When the individuals of the species, driven by their random movement, hit the 
boundary of~$\Omega$, a proportion $\nu$ of them decides to 
use it to keep diffusing, with a possibly different diffusion coefficient~$D$. 
We denote the density of the population which moves on $\p\O$ by $u$ and we 
assume that there no reproduction occurs, but the number of individuals on it 
only varies because of the above mentioned positive contribution from the field 
and a negative contribution given by a part $\mu$ of the population that leaves 
$\p\O$ to go back to the field.

Motivated by the wide number of applications, several authors have recently considered, from different point of views, similar models which couple reaction-diffusion equations posed in different spatial dimensions and with different diffusivities in the interior and on the boundary of a bounded domain. For example, in \cite{Bao}, the authors consider no reaction in the interior and nonlinear Robin coupling terms. They prove the existence of a unique weak solution of the evolution problem by means of sub- and supersolutions and determine the exponential rate of convergence to the unique steady state through entropy methods. On the other hand, in \cite{MCV}, the authors consider two coupled reaction-diffusion systems, one in the interior of the domain and one on the boundary and study, according to the values of the different diffusivities, the formation of Turing patterns in the interior and/or the boundary.

Our first result shows, roughly speaking, that the species invades the whole 
environment $\ov\O$, i.e.~it converges to a positive steady state of our model 
(which will be shown to be unique). With this result in hand, a very natural 
question from the biological point of view is to know how fast the species 
invades the domain, or, in other words, what is the velocity of expansion of the 
region  in which the solution of our problem converges to the positive steady 
state. Such concept is known as \emph{asymptotic speed of propagation} and it 
was introduced in the contest of population dynamics in~\cite{F, KPP}.

The main results of this paper are that our problem indeed admits a positive asymptotic speed of propagation $c^*$ and we study its dependence on the parameters  $d$, $D$ and~$R$, characterizing the different regimes of propagation. In particular, we will prove that, taking a large diffusion coefficient~$D$ produces arbitrarily large speeds, but taking~$D$ small cannot slow down the process below a certain threshold. Notice that neither of these two phenomena is obvious a priori from the biological interpretation of the model, since, in principle, one can think that the slower diffusive process might drag the whole diffusion and either prevent arbitrarily large speeds of propagation or make such speed arbitrarily small -- but we will show that this is not what happens in our model.

Moreover, we will prove that increasing the diffusion coefficient~$D$ 
always makes~$c^*$ larger, as illustrated in Figure \ref{Fig11}.
Remarkably, this phenomenon occurs even 
if $D$ is very small, in particular smaller than the diffusion 
coefficient~$d$ inside the field (i.e.~the speed of 
propagation is influenced by the auxiliary boundary device even when the latter is not, 
in principle, faster than the field itself).
This is in contrast with the case of a single road treated in~\cite{BRR1}, 
where, for $D\leq2d$, the speed of propagation is constantly equal to 
$c_{\KPP}$, the asymptotic speed of a classical Fisher-KPP equation in 
$\R^N$.

\begin{figure}[ht]
\begin{center}
\includegraphics[scale=0.3]{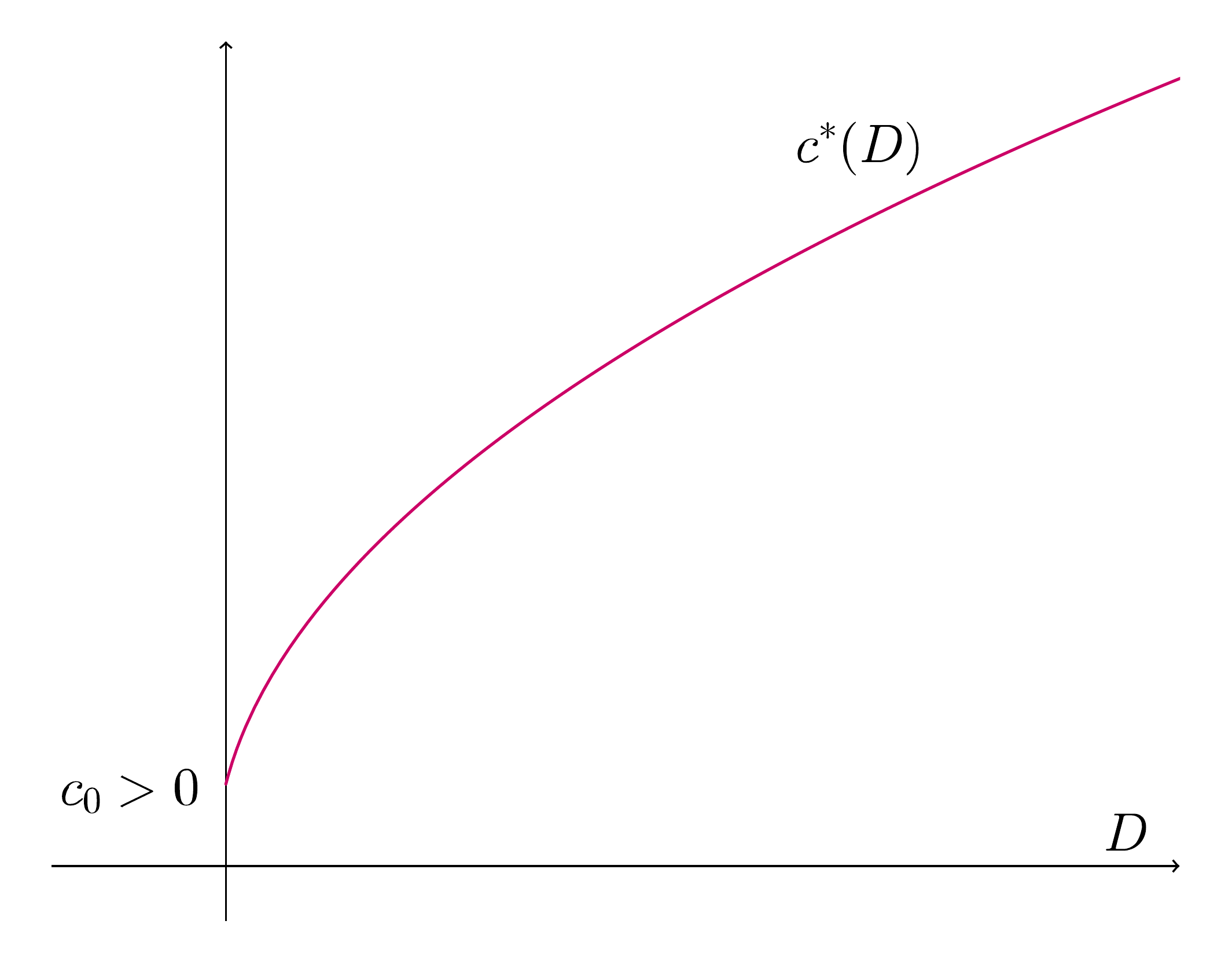}
\caption{\slshape\small{Behavior of the asymptotic speed of propagation $c^*$ with respect to $D$, the diffusivity on the boundary.}} \label{Fig11}
\end{center}
\end{figure}

The dependence of~$c^*$ on the geometry of the field is even more  
intriguing and rich of somehow unexpected features. If the field is very thin 
(i.e.~when $R$ goes to zero), the asymptotic speed approaches zero. Such fact 
is not immediately evident from the biological interpretation: in the case of 
two fast diffusive roads, for instance, one might be inclined to think that for 
small~$R$ the two roads get very close and more ``available'' to the 
individuals, which should make the diffusion faster. This is not the case: the 
explanation of this phenomenon probably lies in the fact that small fields do 
not allow the species to reproduce enough, and this reduces the overall 
diffusion.
On the other hand, as~$R$ becomes large, the asymptotic speed~$c^*$ approaches 
the limit value~$c^*_\infty$, the asymptotic speed in the half-plane found in 
\cite{BRR1}. As mentioned above, $c^*_\infty$ coincides with the 
standard speed 
$c_{\KPP}$ in the field if~$D\le 2d$. Otherwise, if~$D>2d$ (i.e. when the 
auxiliary network provides fast enough diffusion, with respect to the diffusion 
in the field) $c^*_\infty$ is greater than $c_{\KPP}$.

The threshold~$D=2d$ is also crucial for the monotonicity properties of the asymptotic speed~$c^*$ in dependence of the size of the field~$R$. Indeed, when~$D\le2d$ we have that~$c^*$ is increasing in~$R$ (see the left graph in Figure \ref{Fig12}). This can be interpreted by saying that, if the diffusion on the auxiliary network is not fast enough, then the propagation speed only relies on the proliferation of the species, which is in turn facilitated by a larger field. Conversely, when~$D>2d$, the speed~$c^*$ does not depend monotonically on~$R$ anymore, but it has a single maximum at a precise value~$R=R_M$, as shown in the right graph of Figure \ref{Fig12}.

\begin{figure}[ht]
\begin{center}
\begin{tabular}{cc}
\hspace{-0.1cm}\vspace{-0.4cm} \includegraphics[height=4cm]{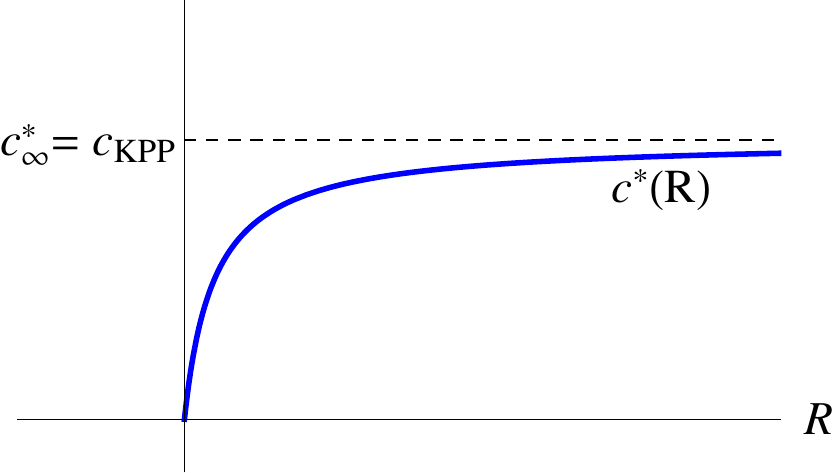} & \hspace{0.5cm}
\includegraphics[height=4cm]{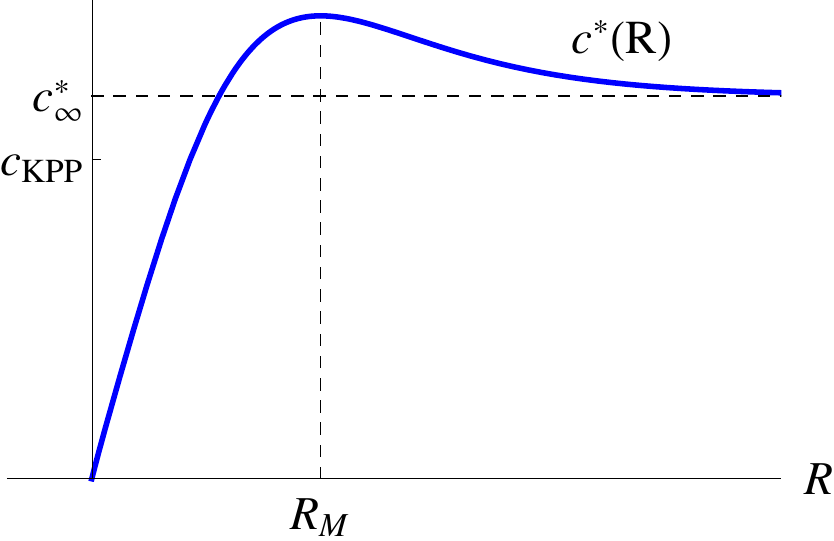}
\end{tabular}
\caption{\slshape\small{Behavior of $c^*$ with respect to the amplitude of the cylinder in the case $D\leq 2d$ (left) and $D>2d$ (right).}} \label{Fig12}
\end{center}
\end{figure}

Once again, this fact underlines an interesting biological phenomenon: when the auxiliary network provides a fast enough diffusion, there is a competition between the proliferation of the species and the availability of the network, in order to speed up the invasion of the species. On the one hand, a large~$R$ would favor proliferation and so, in principle, diffusion; on the other hand, when~$R$ becomes too large the fast diffusion device gets too 
far and cannot be conveniently reached by the population (similarly, for small~$R$ the fast diffusion device becomes easily available, but the proliferation rate gets reduced). As a consequence of these opposite effects, there is an optimal distance at which the two roads shall be placed to get the maximum enhancement of the propagation.

Such kind of monotonicity might arise also in other contests with contrasting effects on the propagation, for example if $D>d$ and there is a smaller reaction on the boundary of the cylinder than in the interior, or if $D<d$ and the reaction on the boundary is larger. However, this kind of analysis, as well as the effect of transport terms on the boundary, goes out the scope of this work and is left for further investigation.


After these considerations, we pass now to introduce the formal mathematical 
setting that describes our model and state in detail our main 
results.

We consider the following problem 
\begin{equation}
\label{I1}
  \left\{ \begin{array}{lll}
  \!\!\!v_t(x,y,t)-d\D v(x,y,t)=f(v(x,y,t)) & \text{for } \!(x,y)\!\in\!\O, & \!\! t>0  \\
  \!\!\!u_t(x,y,t)\!-\!D\D_{\p\O}u(x,y,t)\!=\!\nu v(x,y,t)\!-\!\m u(x,y,t) & \text{for } (x,y)\in\p\O, & \!\! t>0 \\ 
  \!\!\!d\p_n v(x,y,t)=\m u(x,y,t)-\nu v(x,y,t) & \text{for } (x,y)\in\p\O, & \!\! t>0,
  \end{array} \right.
\end{equation}
where $\D_{\p\O}$ denotes the Laplace-Beltrami operator on $\p\O$, while $\p_n$ 
stands for the outer normal derivative to $\p\O$.
The parameters $D,d,\mu,\nu,R$ are positive and the nonlinearity $f\in\mc{C}^1([0,1])$ is of KPP type, i.e. satisfies
\begin{equation}
\label{I2}
f(0)=0=f(1), \qquad 0<f(s)\leq f'(0)s \;\; \text{ for }s\in(0,1)
\end{equation}
and is extended to a negative function outside $[0,1]$. When $N=1$, due to the non-connectedness of $\p B_1(0,R)$,  problem \eqref{I1} reduces to the case of a strip bounded by two roads and reads as
\begin{equation}
\label{I3}
  \left\{ \begin{array}{lll}
  \!\!\!v_t(x,y,t)-d\D v(x,y,t)=f(v(x,y,t)) & \!\!\text{for } \!(x,y)\!\in\!\R\!\times\!(-R,R), & \!\! t\!>\!0  \\
  \!\!\!u_t(x,t)\!-\!Du_{xx}(x,t)\!=\!\nu v(x,R,t)\!-\!\m u(x,t) & \!\!\text{for } x\in\R, & \!\! t\!>\!0 \\ 
  \!\!\!dv_y(x,R,t)=\m u(x,t)-\nu v(x,R,t) & \!\!\text{for } x\in\R, & \!\! t\!>\!0 \\
  \!\!\!\tilde{u}_t(x,t)\!-\!D\tilde{u}_{xx}(x,t)\!=\!\nu v(x,-R,t)\!-\!\m \tilde{u}(x,t) & \!\!\text{for } x\in\R, & \!\! t\!>\!0 \\ 
  \!\!\!-dv_y(x,-R,t)=\m \tilde{u}(x,t)-\nu v(x,-R,t) & \!\!\text{for } x\in\R, & \!\! t\!>\!0.
  \end{array} \right.
\end{equation}
This kind of models goes back to \cite{BRR1}, where the authors studied the problem
\begin{equation}
\label{I4}
  \left\{ \begin{array}{lll}
  \!\!\!v_t(x,y,t)-d\D_{\R^{N+1}} v(x,y,t)=f(v(x,y,t)) & \!\!\text{for } \!(x,y)\!\in\!\R^{N}\!\times\!\R^+, & \!\!\! t>0  \\
  \!\!\!u_t(x,t)\!-\!D\D_{\R^{N}}u(x,t)\!=\!\nu v(x,0,t)\!-\!\m u(x,t) & \!\!\text{for } x\in\R^{N}, & \!\!\! t>0 \\ 
  \!\!\!d\p_n v(x,0,t)=\m u(x,t)-\nu v(x,0,t) & \!\!\text{for } x\in\R^{N}, & \!\!\! t>0
  \end{array} \right.
\end{equation}
for $N=1$, which is the counterpart of \eqref{I3} with just one road and the
field which is a half-plane. 
Some results about \eqref{I4}, such as the
well-posedness of the Cauchy problem, the comparison principle and a
Liouville-type result for stationary solutions, have been proved in \cite{BRR1}
in arbitrary dimension $N$. 

The main result of \cite{BRR1} (proved there for $N=1$, but 
the arguments extend to arbitrary dimension, as shown in Section \ref{section6} 
below) is that system
\eqref{I4} admits an asymptotic speed of
spreading $c^*_{\infty}$, along any parallel direction
to the hyperplane $\{y=0\}$. If $D\leq 2d$, $c^*_{\infty}$ coincides with
$c_{\KPP}$, where
\begin{equation*}
c_{\KPP}=2\sqrt{df'(0)}
\end{equation*}
is the spreading speed for the Fisher-KPP equation $v_t-d\D v=f(v)$ in the
whole space $\R^{N+1}$ (see \cite{AW}), while, if $D>2d$, $c^*_{\infty}$ is strictly
greater than $c_{\KPP}$. As a consequence, the results of \cite{BRR1} show that
a large diffusivity on the road enhances the speed of propagation of the species
in the whole domain.

The main results of the present paper concern
the asymptotic speed of
spreading for problem \eqref{I1}. The first one is related to its existence.

\begin{theorem}
\label{Th11}
There exists $c^*=c^*(D,d,\mu,\nu,R,N)>0$ such that, for any 
solution $(u, v)$ of
\eqref{I1} with
a continuous, nonnegative, initial datum $(u_0,
v_0)\not\equiv(0, 0)$, with bounded support, there holds
\begin{enumerate}[(i)]
\item \label{Th11i} for all $c > c^*$,
\begin{equation}
\label{asp1}
\lim_{t\to +\infty} \sup_{\substack{|x|\geq ct \\ \left|y\right|=R}} |u(x,y, t)|
= 0, \qquad \lim_{t\to +\infty} \sup_{\substack{|x|\geq ct \\
\left|y\right|\leq R}} |v(x,y, t)| = 0,
\end{equation}
\item \label{Th11ii} while, for all $0<c < c^*$,
\begin{equation}
\label{asp2}
\lim_{t\to +\infty}\sup_{\substack{|x|\leq ct \\ \left|y\right|=R}} \left|u(x,y,
t)-\frac{\nu}{\mu}\right|=0, \qquad \lim_{t\to +\infty}\sup_{\substack{|x|\leq
ct \\ \left|y\right|\leq R}} \left|v(x,y, t)-1\right|=0. 
\end{equation}
\end{enumerate}
\end{theorem}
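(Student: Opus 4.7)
The proof follows the now-classical Aronson--Weinberger scheme for KPP problems, adapted to the coupled bulk/boundary system \eqref{I1}. The velocity $c^*$ is characterized as the minimal speed of exponentially decaying planar solutions of the linearization of \eqref{I1} at $(0,0)$, and the two statements in \eqref{asp1}--\eqref{asp2} are established by sandwiching $(u,v)$ between suitable super- and subsolutions.

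First, I would linearize \eqref{I1} by replacing $f(v)$ with $f'(0)v$ and look for solutions of the form
\begin{equation*}
v(x,y,t)=e^{-\alpha(x-ct)}\varphi(y),\qquad u(x,y,t)=e^{-\alpha(x-ct)}\psi(y),
\end{equation*}
with $\alpha>0$, $\varphi$ defined on $\overline{B_N(0,R)}$ and $\psi$ on $\partial B_N(0,R)$. Substitution yields an elliptic eigenvalue problem on the cross-section, coupled through the Robin-type boundary condition of \eqref{I1}. By rotational symmetry the principal pair is radial ($\psi$ constant, $\varphi$ a radial function of $r=|y|$), so the problem reduces to a two-parameter ODE system on $[0,R]$ depending on $\alpha$ and $c$. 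A Krein--Rutman/positive operator argument produces, for each $\alpha>0$, a unique $c(\alpha)$ for which a positive eigenpair exists, and I would set
\begin{equation*}
c^*:=\inf_{\alpha>0}c(\alpha).
\end{equation*}
Smoothness of $\alpha\mapsto c(\alpha)$ together with its behaviour $c(\alpha)\to+\infty$ as $\alpha\to 0^+$ and as $\alpha\to+\infty$ would ensure that $c^*$ is attained at some $\alpha^*>0$ and is strictly positive.

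For part~\eqref{Th11i}, given $c>c^*$ I would choose $\alpha$ with $c(\alpha)<c$ and use the associated eigenpair $(\varphi,\psi)$ to build the supersolution $(Ae^{-\alpha(x-ct)}\psi,\,Ae^{-\alpha(x-ct)}\varphi)$, with $A$ chosen large enough to dominate the initial data on the support; the KPP condition $f(v)\le f'(0)v$ makes this admissible for \eqref{I1}. The comparison principle (a variant of the one from \cite{BRR1}, valid in our geometry) then yields exponential decay on $\{|x|\ge ct\}$ and hence \eqref{asp1}. For part~\eqref{Th11ii}, for any $c<c^*$ I would construct compactly supported subsolutions propagating at speed $c$, following the Aronson--Weinberger truncation procedure combined with the principal eigenfunction of the linearized operator on a large moving ``box'' with Dirichlet conditions at the $x$-ends; this is the same mechanism used in \cite{BRR1} and ensures that $v$ is bounded below by a positive constant on $\{|x|\le ct\}$. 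Combining this lower bound with the attractivity of the unique positive stationary state $(\nu/\mu,1)$ established in the preceding convergence result and with a compactness/translation argument upgrades the pointwise lower bound to the uniform convergence \eqref{asp2}.

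The main technical obstacle is the spectral analysis of the linearized coupled system: unlike in the scalar KPP equation, the dispersion relation mixing the interior equation, the Laplace--Beltrami equation on $\partial\Omega$ and the Robin-type coupling is implicit. Proving that $\alpha\mapsto c(\alpha)$ has the expected shape (continuity, coercivity at the endpoints, existence of an interior minimum) requires careful use of the implicit function theorem together with monotonicity of principal eigenvalues in the zeroth order term. The compactly supported subsolutions are also more delicate than in \cite{BRR1}, since they must simultaneously respect the interior KPP equation, the parabolic equation on $\partial\Omega$, and the exact Robin coupling; this forces the ansatz to be built directly from the principal eigenpair of the full coupled operator rather than from two independent scalar profiles.
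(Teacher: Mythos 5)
Your overall scheme is the same as the paper's: linearize at $(0,0)$, characterize $c^*$ through exponential plane waves, use the KPP inequality to turn the waves into supersolutions for part \eqref{Th11i}, and combine compactly supported subsolutions travelling at speed $c<c^*$ with the Liouville/attractivity results and an interpolation argument (Lemma \ref{Le44}) for part \eqref{Th11ii}. The packaging of the dispersion relation differs: you fix $\alpha$ and invoke Krein--Rutman on the cross-sectional coupled problem to get $c(\alpha)$ and set $c^*=\inf_\alpha c(\alpha)$, whereas the paper works with two explicit curves $\Si_D(c)$, $\Si_d(c)$ in a $(\beta,\alpha)$-plane, where $\beta$ encodes whether the transverse profile is the Dirichlet eigenfunction $\psi_1$ of the ball or the Bessel-type profile $\psi_2$, and defines $c^*$ as the first tangency. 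Your formulation is cleaner for existence, but the paper's explicit geometry is what later yields the monotonicity and limit statements of Theorem \ref{Th12}; the two definitions of $c^*$ coincide, though you do not verify this.

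The genuine gap is in the subsolution construction for part \eqref{Th11ii}, which is the heart of the proof. You propose the Aronson--Weinberger truncation, i.e.\ the Dirichlet principal eigenfunction of the linearized operator with drift on a large moving box, and assert that "this is the same mechanism used in \cite{BRR1}". It is not: \cite{BRR1}, and this paper (Proposition \ref{Pr44}), instead perturb the tangency point of the dispersion curves into the complex domain via a Rouch\'e-type argument (Theorem \ref{Thapp}), take the real part of the resulting complex exponential solution, and truncate its positivity set to obtain a \emph{generalized} subsolution in the sense of Proposition \ref{Pr24}. For the volume--surface coupled system your truncation route would require defining a principal eigenvalue for the coupled operator in a truncated cylinder with Dirichlet conditions at the $x$-ends, proving its monotone convergence as the truncation is removed, and showing it is negative precisely when $c<c^*$ with your definition of $c^*$; none of this is standard or supplied, and the compatibility of the cut-off with the Robin coupling and the boundary equation is exactly the obstruction you flag but do not resolve. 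A further (smaller) omission: the subtle case where the tangency occurs at $\beta=0$ (the transition between the two transverse profiles, where the dispersion curve is only piecewise analytic) needs separate treatment in the complex-root argument, and your eigenvalue formulation silently absorbs it without showing that the subsolution construction survives there.
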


Next, we derive the following qualitative properties for the speed 
of spreading.

\begin{theorem}
\label{Th12}
The quantity $c^*$ given by Theorem \eqref{Th11} satisfies the following 
properties:
\begin{enumerate}[(i)]
\item \label{Th12i} for fixed $d,\mu,\nu,R,N$, the function $D\mapsto
c^*(D)=c^*(D,d,\mu,\nu,R,N)$ is increasing and satisfies
\begin{equation}
\label{I7}
\lim_{D\da 0} c^*(D)=:c_0>0, \qquad \lim_{D\to+\infty}\frac{c^*(D)}{\sqrt{D}}\in(0,+\infty).
\end{equation}
In particular $c^*(D)\to+\infty$ as $D\to+\infty$;
\item \label{Th12ii} for fixed $D,d,\mu,\nu,N$, the function $R\mapsto
c^*(R)=c^*(D,d,\mu,\nu,R,N)$ satisfies
\begin{equation}
\label{I8}
\lim_{R\da 0} c^*(R)=0, \qquad \lim_{R\to+\infty}c^*(R)=c^*_{\infty},
\end{equation}
where $c^*_{\infty}$ is the asymptotic speed of spreading for problem
\eqref{I4}, which satisfies
\begin{equation*}
c^*_{\infty}\begin{cases}
=c_{\KPP} & \text{if $D\leq 2d$,} \\
>c_{\KPP} & \text{if $D> 2d$.}
\end{cases}
\end{equation*}
Moreover, if $D\leq 2d$, the function $R\mapsto
c^*(R)$ is always increasing, while, if $D>2d$, 
it is increasing up to the value 
\begin{equation}
\label{RM}
R_M:=\frac{N D \nu}{(D-2d) \mu},
\end{equation}
and decreasing for greater values. 
In addition,
\begin{equation}
\label{c*max}
c^*(R_M)=\max_{R>0}c^*(R)=\frac{D}{\sqrt{4d(D-d)}}\cKPP.
\end{equation}
\end{enumerate}
\end{theorem}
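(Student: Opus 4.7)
The plan is to build on the variational characterization of $c^*$ that the earlier sections should establish via the linearization of \eqref{I1} at the trivial state $(0,0)$: seeking exponential supersolutions of the form $v(x,y,t)=\psi(|y|)e^{-\lambda(x-ct)}$ and $u(x,y,t)=\eta\,e^{-\lambda(x-ct)}$ (with $\eta$ independent of the angular variable, by radial symmetry), one reduces the spreading-speed problem to the Helmholtz-type eigenvalue problem
\begin{equation*}
-d\Big(\psi''+\tfrac{N-1}{r}\psi'\Big)=\big(d\lambda^2-\lambda c+f'(0)\big)\psi \quad\text{on }(0,R),\qquad \psi'(0)=0,
\end{equation*}
coupled at $r=R$ with $d\psi'(R)=\mu\eta-\nu\psi(R)$ and $(\mu+\lambda c-D\lambda^2)\eta=\nu\psi(R)$. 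Eliminating $\eta$ yields an implicit equation $c=\Lambda(\lambda;D,R,N)$ (involving Bessel functions when $N\geq2$, and trigonometric/hyperbolic functions when $N=1$), whose minimum over $\lambda>0$ coincides with $c^*$.

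For part \eqref{Th12i}, strict monotonicity of $D\mapsto c^*(D)$ follows directly from the fact that $\Lambda(\lambda;D,R,N)$ is strictly increasing in $D$ for every fixed $\lambda$, since raising $D$ strengthens the outward boundary flux in the coupled system. The limit $D\downarrow 0$ is obtained by passing to the limit in $\Lambda$, where the coupling reduces to a Robin-type condition; the resulting spectral value $c_0$ is strictly positive because the internal reaction $f'(0)>0$ alone supports a wave. For $D\to+\infty$, the rescaling $\lambda=\hat\lambda/\sqrt D$ makes the boundary equation dominate, reducing the problem to a one-dimensional KPP-type problem on $\p\Omega$ with an effective reaction, which yields that $c^*/\sqrt D$ converges to a positive constant.

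For part \eqref{Th12ii}, the limit $R\downarrow 0$ follows because $\psi$ becomes asymptotically constant (the cross-section volume vanishes), and the boundary relation forces $d\lambda^2-\lambda c+f'(0)\to 0^+$ for every $\lambda$, hence $c\to 0$. The limit $R\to+\infty$ is obtained by sandwiching the cylinder solution between solutions of \eqref{I4} on the two half-spaces asymptotic to each component of $\p\Omega$ as the cross-section becomes flat near each boundary point. For the monotonicity in $R$, implicit differentiation of $\Lambda(\lambda;D,R,N)$ with respect to $R$ produces a sign condition controlled by the product of $D-2d$ and an explicit positive factor in $\lambda,R$; this yields strict monotonicity when $D\le 2d$ and a single internal maximum otherwise. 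Solving explicitly for the critical point where the derivative vanishes gives $R_M=ND\nu/((D-2d)\mu)$, and substituting back at the optimizing $\lambda$ yields \eqref{c*max} via a careful algebraic simplification.

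The main obstacle is the explicit identification of $R_M$ and the maximum value $c^*(R_M)$: the secular equation is transcendental, and for $N\geq2$ involves Bessel functions together with their derivatives. The key observation should be that at $R=R_M$ there exists an optimizing $\lambda^*$ for which the Bessel-derivative identity collapses the secular equation to an elementary algebraic relation among $D,\,d,\,f'(0)$; only then does the clean formula $c^*(R_M)=\frac{D}{\sqrt{4d(D-d)}}\cKPP$ emerge. Producing this collapse, and showing that the resulting $\lambda^*$ is indeed the minimizer rather than a critical point of wrong type, will be the delicate computational core of the argument.
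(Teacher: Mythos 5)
Your starting point (exponential ansatz, transverse Helmholtz problem in the cross-section, secular equation $c=\Lambda(\lambda;D,R,N)$ minimized over $\lambda$) is essentially the paper's dispersion-relation framework, recast as a min formula instead of a tangency between two curves in the $(\b,\a)$-plane; part \eqref{Th12i} is plausible along your lines (the paper obtains the $D\to\infty$ rate by the rescaling $\tilde c=c/\sqrt D$, $\tilde\a=\a\sqrt D$ together with asymptotics of $\psi_2/\psi_2'$, close to what you sketch). Part \eqref{Th12ii}, however, contains genuine gaps. Your mechanism for $\lim_{R\da0}c^*(R)=0$ is wrong as stated: if the transverse eigenvalue $d\lambda^2-\lambda c+f'(0)$ tended to $0^+$ at a real minimizer $\lambda$, this would force $c\geq\cKPP$, the opposite of the desired conclusion. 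What actually happens is that the boundary-coupling term (the paper's $\chi_1$) tends to $0$ as $R\da0$ while the range of admissible transverse frequencies becomes unbounded, so the tangency occurs near $\a\approx0$, $\b\approx\sqrt{f'(0)/d}$: the transverse eigenvalue tends to $f'(0)$, not to $0$, and it is $\a$ and $c$ that vanish.

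More importantly, the identification of $R_M$ and of \eqref{c*max} --- which you yourself flag as ``the delicate computational core'' --- is left open, and the route you propose (implicit differentiation of the transcendental secular equation in $R$, then locating a critical point) is not the one that works. The missing idea is a dichotomy in the sign of the transverse frequency at the tangency point: either the tangency occurs at $\b^*>0$ (oscillatory transverse profile, Dirichlet eigenfunction of the ball), in which case log-concavity of $\psi_1$ makes $R\mapsto c^*$ increasing, or at $\b^*<0$ (modified-Bessel profile $\psi_2$), in which case log-convexity of $\psi_2$ makes it decreasing. The switch can only happen at $\b^*=0$, and the two dispersion curves can meet at $\b=0$ only when $c=c_M=\frac{D}{\sqrt{4d(D-d)}}\,\cKPP$: there the transverse profile is constant, $\psi(0)=1$, $\psi'(0)=0$, all Bessel functions drop out, and the system collapses to $-D\a^2+c\a=0$, $-d\a^2+c\a=f'(0)$. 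Which regime occurs is decided by comparing the second $\b$-derivatives of the two curves at $\b=0$, $c=c_M$, yielding the algebraic condition $\frac{2d}{D}\gtrless1-\frac{N\nu}{\mu R}$ and hence \eqref{RM}; uniqueness of the tangency point (proved in the paper by a PDE comparison argument) is also needed to conclude that $\b^*(R)\to0$ as $R\to R_M$. Without this dichotomy your plan cannot produce \eqref{RM} or \eqref{c*max}. Finally, your $R\to\infty$ argument by ``sandwiching between half-space solutions on the two boundary components'' does not apply for $N\geq2$ (the boundary of the cylinder is connected) and is not a sub/supersolution comparison in any obvious sense even for $N=1$; the paper instead proves locally uniform convergence of the dispersion curve to the half-space curve via the asymptotics $\psi_2(r)/\psi_2'(r)\to-1$.
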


The paper is organized as follows: in Section \ref{section2} we 
present some preliminary results related to the existence and comparison 
properties of solutions of the Cauchy problem associated with \eqref{I1}. In 
Section \ref{section3} we prove the existence and uniqueness of the positive 
steady state and we show that it is a global attractor, in the sense of locally uniform 
convergence, for solutions of \eqref{I1} starting from a nonnegative, 
nontrivial initial datum. Section \ref{section4} is dedicated to the 
geometrical construction of $c^*$ and to the proof of Theorem 
\ref{Th11}. Finally,
the dependences of $c^*$ on $D$ and on $R$ are discussed in Sections 
\ref{section5} and \ref{section6} respectively.

\setcounter{equation}{0}
\section{Preliminary results and comparison principles}
\label{section2}

In this section we present some fundamental results that are extensively 
used in the rest of the paper. Some of them are contained in or
follow easily from \cite{BRR1,BRR2,BRR3}, in which cases we refer to 
such works and just outline the adaptations that need to be done. 
We start with a weak and strong comparison
principle for the following system
\begin{equation}
\label{pardrift}
  \left\{ \begin{array}{ll}
  v_t-d\D v+c v_x=f(v) & \text{in } \O\times(0,+\infty)  \\
  u_t-D\D_{\p\O}u+c u_x=\nu v-\m u & \text{in } \p\O\times(0,+\infty) \\ 
  d\p_n v=\m u-\nu v & \text{on } \p\O\times(0,+\infty),
  \end{array} \right.
\end{equation}
which is more general than \eqref{I1}, since we include the possibility of an additional drift term with velocity $c\in\R$ in the $x$-direction. As usual, by a \emph{supersolution} (resp.
\emph{subsolution}) of \eqref{pardrift} we mean a pair $(u,v)$ satisfying System
\eqref{pardrift} with ``$\geq$'' (resp. ``$\leq$'') instead of ``$=$''.
Below and in the sequel, the order between real vectors is understood 
componentwise.

\begin{proposition}
\label{Pr21}
Let $(\un{u}, \un{v})$ and $(\ov{u},\ov{v})$ be, respectively, a subsolution bounded from
above and a supersolution bounded from below of \eqref{pardrift}
satisfying 
$(\un{u},\un{v})\leq(\ov{u},\ov{v})$
at $t = 0$. 
Then $(\un{u},\un{v})\leq(\ov{u},\ov{v})$ for all $t>0$.

Moreover, if there exists $T>0$ and $(x,y)\in\overline\O$ such that either 
$\un{u}(x,T)=\ov{u}(x,T)$ or $\un{v}(x,y,T)=\ov{v}(x,y,T)$, then 
$(\un{u},\un{v})=(\ov{u},\ov{v})$ for $t\in[0,T]$.

\begin{proof}
The arguments of the proof of \cite[Proposition 3.2]{BRR1} easily extend to our 
case, the key point being the strong maximum principle, which holds for the 
Laplace-Beltrami 
operator $\D_{\p\O}$ (and the associated evolution operator) like for the 
operator $\partial_{xx}$.
\end{proof}
\end{proposition}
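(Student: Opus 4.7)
The plan is to reduce to a linear coupled comparison statement and then run a perturbation/maximum-principle argument carefully tailored to the boundary coupling. Setting $w := \ov v - \un v$ and $z := \ov u - \un u$, the boundedness hypotheses confine $\un v, \ov v$ to a compact interval on which $f$ is Lipschitz, with some constant $L_f$. Writing $f(\ov v) - f(\un v) = -b(x,y,t)\,w$ with $|b|\le L_f$ and subtracting the (in)equalities yields the linear coupled system
\begin{equation*}
w_t - d\D w + c w_x + b w \ge 0 \ \text{in}\ \O, \qquad d\p_n w + \n w - \m z \ge 0 \ \text{on}\ \p\O,
\end{equation*}
together with $z_t - D\D_{\p\O} z + c z_x + \m z - \n w \ge 0$ on $\p\O$, and $w(\cdot,0), z(\cdot,0) \ge 0$. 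The task is thus reduced to proving $w, z \ge 0$ for all $t>0$.

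Fix $T>0$ and introduce the growing spatial weight $\chi(x) := \cosh(\a x)$ with $\a > 0$, which satisfies $\chi''=\a^2\chi$, $|\chi'|\le\a\chi$, and $\p_n\chi = 0$ on $\p\O$. I consider the perturbations
\begin{equation*}
W := w + \e\m\chi(x) e^{\l t}, \qquad Z := z + \e\n\chi(x) e^{\l t},
\end{equation*}
for $\e>0$ small and $\l>0$ to be chosen. A direct computation shows that, for $\l$ large enough (depending only on $L_f, \a, d, D, |c|$), the pair $(W, Z)$ satisfies all three inequalities with strict sign. The key point in this choice is to place $\m$ in front of $\chi$ in the definition of $W$ and $\n$ in front of $\chi$ in that of $Z$, which is precisely the balance that preserves the Robin-type boundary condition (since $\n\m - \m\n = 0$). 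Moreover $W(\cdot,0), Z(\cdot,0) > 0$, and $W, Z \to +\infty$ as $|x|\to\infty$ because $w, z$ are bounded from below, so any nonpositive infimum of $W$ or $Z$ on $[0,T]\times\overline\O$ is attained at an actual point.

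The heart of the proof is then a contradiction argument: assume $\min(W, Z) \le 0$ somewhere, and let $t_0\in(0,T]$ be the smallest time at which one of them vanishes, at a point $p_0$. Three cases arise. If $W(p_0,t_0)=0$ with $p_0$ in the interior of $\O$, the standard minimum conditions $W_t\le 0$, $\D W\ge 0$, $W_x=0$ contradict the strict parabolic inequality. If $W(p_0,t_0)=0$ with $p_0\in\p\O$, Hopf's lemma gives $\p_n W(p_0,t_0) < 0$, yet the boundary inequality yields $d\p_n W\ge \m Z - \n W = \m Z \ge 0$, a contradiction. If $Z(p_0,t_0)=0$ with $p_0\in\p\O$, then at the surface minimum $Z_t \le 0$, $\D_{\p\O}Z\ge 0$, $Z_x=0$, whence $Z_t - D\D_{\p\O}Z + c Z_x + \m Z - \n W \le -\n W \le 0$, again contradicting the strict inequality. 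Letting $\e\da 0$ and using arbitrariness of $T$ yields the weak comparison; the strong comparison then follows from the same three-case dichotomy applied without perturbation to $(w,z)$ via the strong maximum principle for linear parabolic equations and the Hopf lemma on $\p\O$. The main obstacle throughout is reconciling Hopf's lemma at the cylinder wall with the Robin boundary condition, which is what forces the perturbation to track both $W$ and $Z$ simultaneously with the balanced coefficients $\m$ and $\n$.
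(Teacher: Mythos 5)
Your argument is correct and is essentially the written-out version of the proof the paper delegates to \cite[Proposition 3.2]{BRR1}: reduction to a linear coupled system, an $\e$-perturbation whose coefficients $\m$ and $\n$ are balanced to preserve the Robin coupling, a first-touching-time trichotomy using the Hopf lemma on $\p\O$ and the (strong) maximum principle for $\D_{\p\O}$, which is exactly the "key point" the authors highlight. The only imprecision is that $\un v,\ov v$ are confined to a compact interval (so that $f$ is Lipschitz and the coefficient $b$ bounded) only on the set where the ordering fails, i.e.\ where $w<0$; since your touching points and the neighborhoods where you invoke Hopf's lemma all lie in that set, this is harmless.
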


The following result establishes the existence and uniqueness for the 
solution of the Cauchy problem associated with \eqref{I1}. The proof of the 
existence part follows from \cite[Appendix A]{BRR1}, while the uniqueness 
follows from Proposition \ref{Pr21}.

\begin{proposition}
\label{Pr22} 
Let $(u_0(x,y),v_0(x,y))$ be a nonnegative, bounded and continuous pair of functions defined in $\p\O$ and $\O$ respectively. Then, there is a unique nonnegative, bounded solution $(u,v)$ of \eqref{pardrift} satisfying $(u,v)|_{t=0}=(u_0,v_0)$. 
\end{proposition}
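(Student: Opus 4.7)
The plan is to treat uniqueness and existence separately.

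Uniqueness is an immediate consequence of Proposition~\ref{Pr21}: if $(u_1,v_1)$ and $(u_2,v_2)$ are two nonnegative bounded solutions of \eqref{pardrift} sharing the initial datum $(u_0,v_0)$, then each is simultaneously a bounded subsolution and a bounded supersolution of \eqref{pardrift}, and two applications of Proposition~\ref{Pr21} (once with $(u_1,v_1)$ as subsolution of $(u_2,v_2)$, once with the roles reversed) force $(u_1,v_1)\equiv(u_2,v_2)$ for all $t\geq 0$.

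For existence I would adapt the scheme of \cite[Appendix A]{BRR1}. First truncate $\O$ to the bounded cylinder $\O_L:=(-L,L)\times B_N(0,R)$, imposing zero Dirichlet conditions on the two caps $\{\pm L\}\times\ov{B_N(0,R)}$ for $v$ and on $\{\pm L\}\times\p B_N(0,R)$ for $u$, together with the initial data $(u_0,v_0)$ suitably cut off near $x=\pm L$ to ensure compatibility. On each $\O_L$ a solution $(u_L,v_L)$ can be built by a fixed-point argument on a small time interval: given a candidate $u$, solve the oblique-derivative parabolic problem for $v$ with the Robin-type condition $d\p_n v+\nu v=\m u$, substitute its trace in the linear boundary equation to produce a new $u$, and invoke Banach's theorem in a suitable H\"older space; iteration extends the solution to any time horizon. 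To pass to the limit $L\to+\infty$, Proposition~\ref{Pr21} applied against $(0,0)$ yields nonnegativity and, applied against the constant pair $(M_u,M_v)$ with $\m M_u=\nu M_v$ chosen so that $M_v\geq\max\{1,\|v_0\|_\infty,\m\|u_0\|_\infty/\nu\}$, yields a uniform $L^\infty$ bound (indeed $f(M_v)\le 0$ by \eqref{I2} and the two boundary inequalities hold with equality). Interior parabolic regularity then gives equiboundedness in parabolic H\"older spaces on compact subsets of $\ov\O\times(0,+\infty)$, and a diagonal extraction produces the required solution on $\O\times(0,+\infty)$ attaining $(u_0,v_0)$ at $t=0$.

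The main technical obstacle is the circular coupling between the two equations, which live on manifolds of different dimension and interact through the Robin condition and the source term $\nu v$ on $\p\O$. It is resolved by a regularity bootstrap: starting from the uniform $L^\infty$ bound, $L^p$ estimates for oblique-derivative parabolic problems produce H\"older continuity of $v$ up to $\p\O$; feeding its trace into the scalar equation for $u$ and invoking parabolic Schauder theory on the smooth manifold $\p\O$ gives H\"older regularity for $u$, which in turn upgrades $v$ to the classical $C^{2+\a,1+\a/2}$ class. Since these steps appear essentially verbatim in \cite[Appendix A]{BRR1} for the half-space case, and the only change here is that $\p\O$ is a smooth cylinder endowed with the Laplace-Beltrami operator $\D_{\p\O}$ (which satisfies the same parabolic a priori estimates as the flat Laplacian), the adaptation is routine and no new difficulty arises.
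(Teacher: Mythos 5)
Your proposal is correct and follows essentially the same route as the paper, which simply derives uniqueness from Proposition~\ref{Pr21} and defers existence to the construction in \cite[Appendix A]{BRR1}; your constant supersolution $(\nu M_v/\mu,\,M_v)$ is in fact the same barrier the paper uses later in Theorem~\ref{Th31}. The only point to keep in mind is that on the truncated cylinders one needs the comparison principle for the mixed Dirichlet/Robin problem on $\O_L$ rather than Proposition~\ref{Pr21} verbatim, but this is the same routine adaptation carried out in \cite{BRR1}.
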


Once these properties established, we can show that the model exhibits
total mass conservation if no reproduction is present in the field.

\begin{proposition}
\label{Pr23}
Assume $f\equiv0$ and let $(u,v)$ be the solution of \eqref{I1}
with a nonnegative, bounded initial datum $(u_0,v_0)$ decaying at least
exponentially as $|x|\to\infty$. Then, for every $t>0$, we have
\begin{equation*}
\left\|v(\cdot,t)\right\|_{L^1(\O)}+\left\|u(\cdot,t)\right\|_{L^1(\p\O)}=\left\|v_0\right\|_{L^1(\O)}+\left\|u_0\right\|_{L^1(\p\O)}.
\end{equation*}

\begin{proof}
On the one hand, taking as subsolution the pair $(0,0)$ in Proposition 
\ref{Pr21} we see that $(u,v)$ is nonnegative for all times. 
On the other hand, anticipating on Section \ref{section4},
the system admit exponential supersolutions with arbitrary slow exponential 
decay (i.e., with the notation of Section \ref{section4}, $\alpha$ arbitrarily 
small, which is allowed provided $c$ is sufficiently large) and therefore, 
up to translation, above $(u_0,v_0)$ at $t=0$. Proposition \ref{Pr21} then 
implies that $(u,v)$ decays exponentially as $|x|\to\infty$ for all times.
As a consequence of standard parabolic estimates, the same is true for the 
derivatives of $u$ and $v$, up to the second order in space and first order in 
time.
Therefore, using the
equations of \eqref{I1} and applying Stokes' theorem, we obtain
\begin{align}
\frac{d}{dt}\!\left\|v(\cdot,t)\right\|_{L^1(\O)}\!&=\!\int_{\O}v_t(x,y,t)=d\int_{\O}\D v(x,y,t)=d\int_{\p\O}\p_n v(x,y,t) \notag \\
&=\!\int_{\p\O}\!\!\bigl(\mu u(x,y,t)\!-\! \nu v(x,y,t)\!\bigr)\!=\!D\!\int_{\p\O}\!\!\D_{\p\O} u(x,y,t)\!-\!\int_{\p\O}\!u_t(x,y,t) \notag \\
&=\!-\frac{d}{dt}\!\left\|u(\cdot,t)\right\|_{L^1(\p\O)},\notag
\end{align}
where, letting $\D_{S^{N-1}}$ denote the Laplace-Beltrami operator on
the $(N-1)$-di\-men\-sional unit sphere, we have used
$$
\int_{\p\O}\!\!\D_{\p\O}
u(x,y,t)\!=\!\int_{S^{N-1}}\!\int_{-\infty}^{+\infty}\!\p^2_{xx}u(x,y,t)+\!\int_
{-\infty}^{+\infty}\!\int_{S^{N-1}}\!\!\! \D_{S^{N-1}} u(x,y,t)=0,
$$
which holds because $\p_xu$ decays exponentially to $0$
and the second addend in the right hand side is $0$ by Stokes' theorem, since
$S^{N-1}$ is compact and has no boundary. We have eventually shown that
$\left\|v(\cdot,t)\right\|_{L^1(\O)}+\left\|u(\cdot,t)\right\|_{L^1(\p\O)}$ is
constant in time and this concludes the proof.
\end{proof}
\end{proposition}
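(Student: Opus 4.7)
The plan is a direct mass-balance computation based on testing the equations against the constant function $1$. First I would establish that the two $L^1$ norms in the statement are actually finite and smoothly differentiable in $t$, by propagating the exponential decay of the initial datum to positive times. Nonnegativity is immediate from Proposition \ref{Pr21} applied to the subsolution $(0,0)$. For the decay, I would use the fact, anticipated in the proof of Proposition \ref{Pr23} itself, that system \eqref{I1} admits exponentially decaying supersolutions with arbitrarily small exponential rate; placing such a supersolution above $(u_0,v_0)$ after a space translation and invoking Proposition \ref{Pr21} forces $(u,v)$ to decay at least exponentially in $x$ for all $t>0$. Standard interior parabolic estimates then transfer this decay to $v_t$, $u_t$ and to space derivatives up to second order, so that all integrals below make sense and one can differentiate under the integral sign.

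Next I would compute $\tfrac{d}{dt}\|v(\cdot,t)\|_{L^1(\Omega)}$. Since $f\equiv0$, the interior equation is $v_t=d\Delta v$, so by the divergence theorem
\begin{equation*}
\frac{d}{dt}\int_{\Omega}v=d\int_{\Omega}\Delta v=d\int_{\p\Omega}\p_n v.
\end{equation*}
Substituting the Robin-type transmission condition $d\p_n v=\mu u-\nu v$ converts this to a boundary integral of $\mu u-\nu v$. Using the boundary evolution equation $u_t-D\Delta_{\p\Omega}u=\nu v-\mu u$ rewrites this as
\begin{equation*}
\frac{d}{dt}\int_{\Omega}v=\int_{\p\Omega}\bigl(D\Delta_{\p\Omega}u-u_t\bigr).
\end{equation*}
It remains to show $\int_{\p\Omega}\Delta_{\p\Omega}u=0$, which will give $\tfrac{d}{dt}\|v\|_{L^1}=-\tfrac{d}{dt}\|u\|_{L^1}$, i.e. the conservation law.

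The step I expect to be the most delicate is precisely this vanishing of $\int_{\p\Omega}\Delta_{\p\Omega}u$, because $\p\Omega=\R\times S^{N-1}$ is noncompact. Here I would use the product structure: in cylindrical coordinates $(x,\omega)$ with $\omega\in S^{N-1}_R$, the Laplace-Beltrami operator splits as $\Delta_{\p\Omega}=\p_{xx}+\Delta_{S^{N-1}_R}$. The integral of $\p_{xx}u$ against $dx\,d\sigma(\omega)$ vanishes because $\p_xu$ decays exponentially as $|x|\to\infty$ (so that $\int_{\R}\p_{xx}u\,dx=0$ for each fixed $\omega$, and then one integrates over $S^{N-1}_R$). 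The integral of $\Delta_{S^{N-1}_R}u$ vanishes for the opposite reason: $S^{N-1}_R$ is a compact manifold without boundary, so Stokes' theorem gives $\int_{S^{N-1}_R}\Delta_{S^{N-1}_R}u\,d\sigma=0$ for each fixed $x$, and then one integrates in $x$. In the special case $N=1$ the ``sphere'' $S^0$ is just two points and there is no tangential part, so the conclusion is even easier. Combining all of this yields the claimed conservation of total mass.
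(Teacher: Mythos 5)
Your proposal is correct and follows essentially the same route as the paper's proof: nonnegativity from Proposition \ref{Pr21} with the zero subsolution, exponential decay in $x$ propagated to derivatives via slowly decaying exponential supersolutions and parabolic estimates, then the mass-balance computation using the divergence theorem, the transmission and boundary equations, and the splitting $\D_{\p\O}=\p_{xx}+\D_{S^{N-1}}$ to show the boundary Laplacian integrates to zero. No gaps to report.
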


We also need the following comparison principle involving an extended class of 
generalized subsolutions and which is a particular instance of \cite[Proposition 
2.2]{BRR3}. Actually the result of \cite{BRR3} holds for a more general class of 
subsolutions than the one we consider here, 
however we present it in the form needed in the sequel.

\begin{proposition}
\label{Pr24}
Let $(u_1, v_1)$ be a subsolution of
\eqref{pardrift}, bounded from above and such that $u_1$ and $v_1$ 
vanish on the boundary respectively of 
an open set $E$ of $\p\O\times[0,+\infty)$ and of an open set 
$F$ of $\ov\O\times[0,+\infty)$ (in the relative topologies). If the functions 
$\un u$,
$\un v$ defined by
\begin{equation*}
\un{u} :=\begin{cases}
\max\{u_1, 0\} & \text{in $\ov{E}$} \\
0 & \text{otherwise,}
\end{cases} 
\qquad
\un{v} :=\begin{cases}
\max\{v_1, 0\} & \text{in $\ov{F}$} \\
0 & \text{otherwise,}
\end{cases}
\end{equation*}
satisfy
\begin{equation}
\label{condcp2}
\begin{split}
\un{v}(x, y, t) &\geq v_1(x, y, t) \quad \text{ for all } (x,y,t)\in\p\O\times\R_+ \text{ such that } \un{u}(x,y, t) > 0, \\
\un{u}(x,y, t) &\geq u_1(x,y, t) \quad \text{ for all } (x,y,t)\in\p\O\times\R_+ \text{ such that } \un{v}(x, y, t) > 0,
\end{split}
\end{equation}
then, for any supersolution $(\ov{u},\ov{v})$ of \eqref{I1} bounded from 
below and such that $(\un{u},\un{v})\leq(\ov{u},\ov{v})$ at $t = 0$, we have 
$(\un{u},\un{v})\leq(\ov{u},\ov{v})$ for all $t > 0$.
\end{proposition}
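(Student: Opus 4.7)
My plan is to follow the strategy of \cite[Proposition 2.2]{BRR3}, making only the cosmetic adjustments needed to pass from the half-space geometry of \cite{BRR3} to the cylinder $\O=\R\times B_N(0,R)$. The key structural features used in \cite{BRR3}---the strong parabolic maximum principle for $\p_{xx}$ on the road and a Hopf-type argument at the road/field interface---have direct analogues here: by Proposition \ref{Pr21} the Laplace-Beltrami operator $\D_{\p\O}$ enjoys the same strong maximum principle, and the Robin-type transmission condition $d\p_n v=\mu u-\nu v$ is formally identical to the one treated in \cite{BRR3}.

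The conceptual core of the argument is to regard $(\un u,\un v)$ as a \emph{generalized subsolution}. Inside the open set $F$, at points where $v_1>0$, one has $\un v=v_1$ and the interior PDE inequality is inherited from $(u_1,v_1)$; analogously on $E$ for the boundary equation. Where $\un u$ or $\un v$ is identically zero, the inequalities are trivially satisfied since $f(0)=0$. At the interfaces $\p E\subset\p\O$ and $\p F\subset\ov\O$, the truncation $\max\{\cdot,0\}$ produces a convex ``upward kink'': in a distributional sense the second derivative across such an interface carries a nonnegative Dirac contribution, so the truncated objects still satisfy the parabolic inequalities in a suitable weak sense. The role of the compatibility condition \eqref{condcp2} is then precisely to make the Robin coupling $d\p_n v=\mu u-\nu v$ compatible with this pasting: wherever $\un u>0$ on $\p\O$ the value of $\un v$ must dominate $v_1$, and vice versa, so that the one-sided flux balance holds with ``$\leq$'' throughout $\p\O$ and no illegitimate source term is injected at the interfaces.

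With this in hand, I would introduce the differences $w:=\ov u-\un u$ and $z:=\ov v-\un v$; in the regions where $(\un u,\un v)$ is smooth these satisfy the standard linear parabolic inequalities associated to \eqref{pardrift} linearized around $(\un u,\un v)$, with $(w,z)\ge 0$ at $t=0$ by hypothesis. The conclusion then follows from Proposition \ref{Pr21} applied to $(w,z)$, modulo the contributions at $\p E$ and $\p F$, which, as explained, carry the right sign and only reinforce the inequality. The main obstacle is exactly this last step---justifying rigorously that the pasting does not destroy the parabolic inequality at the low-regularity interfaces. The standard remedy, and the one used in \cite{BRR3}, is to replace $(\un u,\un v)$ by a sup-convolution $(\un u_\varepsilon,\un v_\varepsilon)$ (or an appropriate mollification respecting the truncation), apply Proposition \ref{Pr21} to the smooth objects, and pass to the limit $\varepsilon\downarrow 0$; the compatibility condition \eqref{condcp2} survives the regularization, which is all that is needed.
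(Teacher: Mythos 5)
Your proposal is correct and follows essentially the same route as the paper, which in fact offers no independent argument at all: it simply declares Proposition \ref{Pr24} to be a particular instance of \cite[Proposition 2.2]{BRR3}, with the Laplace--Beltrami operator and the transmission condition on $\p\O$ playing the roles of $\p_{xx}$ and the road--field exchange. Your sketch of why the truncation and the compatibility condition \eqref{condcp2} preserve the subsolution property (so that $v_1$ and $u_1$ can be replaced by $\un v$ and $\un u$ in the coupling terms wherever the other component is positive) is exactly the content of the cited result, so there is nothing to compare beyond the level of detail you chose to supply.
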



Before concluding this section, we derive the strong maximum principle for 
two variants of system \eqref{I1}: 
\begin{equation}
\label{51}
  \left\{ \begin{array}{lll}
  \!\!\!v_t-d\D v=f(v) & \text{for } (x,y)\in\O, &  t>0  \\
  \!\!\!u_t=\nu v-\m u & \text{for } (x,y)\in\p\O, &  t>0 
\\ 
  \!\!\!d\p_n v=\m u-\nu v & \text{for } (x,y)\in\p\O, &  
t>0,
  \end{array} \right.
\end{equation}
\begin{equation}
\label{54}
  \left\{ \begin{array}{lll}
  \!\!v_t-d\D_y v=f(v) & \text{for } \!(x,y)\!\in\!\O, & \!\!\! 
t\!>\!0  \\
  \!\!u_t\!-\!\D_{\p\O}u=\nu v\!-\!\m u & \text{for 
} \!(x,y)\!\in\p\O, & \!\!\! t\!>\!0 \\ 
  \!\!d\p_n v=\m u-\nu v & \text{for } \!(x,y)\!\in\p\O, & 
\!\!\! t\!>\!0,
  \end{array} \right.
\end{equation}
where $\D_y$ denotes the Laplace operator with respect to the $y$-variables 
only.
Both systems are semi-degenerate, in the sense that 
exactly one of the two parabolic equations is degenerate: the second one for 
\eqref{51} and the first one for \eqref{54}.
Apart from being in themselves 
interesting, these results will be needed in Section \ref{section5} to 
characterize some singular limits related to the asymptotic speed of propagation 
for \eqref{I1}. Roughly speaking, even if one equation is 
degenerate, the presence of the other, which is not degenerate, still 
guarantees the system to be strongly monotone.

\begin{proposition}
\label{Pr25}
Let $(\un{u}, \un{v})$ and $(\ov{u},\ov{v})$ be a subsolution 
bounded from
above and a supersolution bounded from below, both of system \eqref{51} 
or of system~\eqref{54}, such that 
$(\un{u}, \un{v})\leq(\ov{u},\ov{v})$ in $\ov\O\times(0,+\infty)$. If there exist $T>0$ and $(\hat x,\hat 
y)\in\overline\O$ for 
which either 
$\un{u}(\hat x,T)=\ov{u}(\hat x,T)$ or $\un{v}(\hat x,\hat y,T)=\ov{v}(\hat 
x,\hat y,T)$, then 
$(\un{u},\un{v})=(\ov{u},\ov{v})$ for $t\in[0,T]$.
\begin{proof} The pair $(u,v):=(\ov{u}-\un{u},\ov{v}-\un{v})$ is a nonnegative supersolution 
of either system \eqref{51} or~\eqref{54}, with $f(v)$ replaced by 
$v(f(\ov{v})-f(\un{v}))/(\ov{v}-\un{v})$ in the first equation. This 
replacement will be understood throughout the proof.
We argue slightly differently according to which system is 
involved.

{\em Case 1}. System \eqref{51}.\\
Suppose first that $v(\hat x,\hat y,T)=0$. If $(\hat x,\hat y)\in\O$ 
then, applying the strong parabolic maximum principle to the first equation in 
\eqref{51}, we conclude that $v=0$ for $t\leq T$ and thus the same is true for 
$u$ by the third equation.
If $(\hat x,\hat y)\in\p\O$, the parabolic Hopf lemma implies that either
$\p_n v(\hat x,\hat y,T)<0$ or $v=0$ for $t\leq T$. The first situation being 
ruled out by the third equation in \eqref{51}, we can conclude as before.
Suppose now that $u(\hat x,\hat y,T)=0$. We 
claim that this yields $v(\hat x,\hat y,T)=0$, and thus the previous arguments 
apply. Indeed, if it were not the case, we would have 
from the second equation of \eqref{51} that $u_t(\hat x,\hat y,T)>0$ and, as a 
consequence, $u(\hat x,\hat y,t)<0$ for $t<T$, $t\sim 0$, which is impossible. 

{\em Case 2}. System \eqref{54}.\\
Suppose first that $u(\hat x,\hat y,T)=0$. By the second equation in 
\eqref{54} we see that $u$ is a supersolution of the uniformly parabolic linear 
operator 
$\p_t-\D_{\p\O}+\mu$. Hence, the parabolic strong  maximum 
principle yields $u=0$ in $\partial\Omega\times[0,T]$, and thus the 
same is true for $v$, always by the second equation in \eqref{54}. Moreover, 
$\partial_n v=0$ on $\partial\Omega\times[0,T]$ by the third 
equation in \eqref{54}.
Since, for fixed $x\in\R$, $(y,t)\mapsto v(x,y,t)$ is a 
supersolution of the first equation in \eqref{54}, which is uniformly 
parabolic in $(y,t)$, we deduce from the Hopf lemma that $v=0$ in 
$\overline\Omega\times[0,T]$.

Consider now the case where $v(\hat x,\hat y,T)=0$. If 
$(\hat x,\hat y)\in\p\O$, since $v\geq 0$, we have that 
$\p_n v(\hat x,\hat y,T)\leq 0$ and the third equation of~\eqref{54} implies 
$u(\hat x,\hat y,T)=0$. That is, we end up with the previous case. 
If instead $(\hat x,\hat y)\in\O$ then, applying the parabolic strong 
maximum principle to $(y,t)\mapsto v(\hat x,y,t)$, which is a 
supersolution of the first equation in \eqref{54}, we derive $v(\hat x,y,t)=0$ 
for $|y|\leq R$, $t\leq T$, i.e. $v$ vanishes on 
a point of $\partial\O$, which is a case that we have already treated.
\end{proof}
\end{proposition}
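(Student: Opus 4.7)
The plan is to reduce the statement to a strong parabolic maximum principle for the componentwise difference $(u,v):=(\ov u-\un u,\ov v-\un v)\ge 0$, which, using $f\in\mc{C}^1$, is a nonnegative supersolution of the same system with $f(v)$ replaced by $c(x,y,t)v$ for some bounded $c$. After passing to $e^{Kt}v$ with $K$ sufficiently large, one may assume that the zero-order coefficients have the appropriate sign for the classical parabolic comparison. The task then becomes: if $(u,v)\ge0$ is such a supersolution and one component vanishes at some $(\hat x,\hat y,T)$, then $(u,v)\equiv0$ on $\ov\O\times[0,T]$.

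For system \eqref{51} I would exploit that the $v$-equation is uniformly parabolic in $\ov\O$ while the $u$-equation is a pure ODE on $\p\O$. If $v(\hat x,\hat y,T)=0$ with $(\hat x,\hat y)\in\O$, the classical strong maximum principle immediately yields $v\equiv0$ for $t\le T$. If the vanishing point lies on $\p\O$, the parabolic Hopf lemma forces $\p_n v(\hat x,\hat y,T)<0$ unless $v\equiv0$, but the boundary inequality $d\p_n v\ge\m u-\n v=\m u\ge0$ rules out the strict sign, so $v\equiv0$ anyway. Once $v\equiv0$, the same BC gives $\m u\le 0$, hence $u\equiv0$ on $\p\O\times[0,T]$. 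If instead $u(\hat x,\hat y,T)=0$, the ODE supersolution inequality $u_t+\m u\ge\n v$, together with $u\ge0$ attaining a minimum in $t$ at $T$, yields $0\ge u_t(\hat x,\hat y,T)\ge\n v(\hat x,\hat y,T)$, forcing $v(\hat x,\hat y,T)=0$ and reducing to the previous case.

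For system \eqref{54} the roles are reversed: the $u$-equation is uniformly parabolic on $\p\O$, while the $v$-equation is degenerate in the longitudinal variable $x$ but uniformly parabolic in the transverse variables $(y,t)$. If $u(\hat x,\hat y,T)=0$, the strong maximum principle on $\p\O$ propagates $u\equiv0$ along the connected component of $\p\O$ through $(\hat x,\hat y)$; the $u$-equation then gives $\n v=0$ on that component and the BC forces $\p_n v=0$ there. For each such fixed $x$, I would then view $(y,t)\mapsto v(x,y,t)$ as a nonnegative supersolution of a uniformly parabolic equation on $B_N(0,R)$ with zero Dirichlet and Neumann traces on (a portion of) $\p B_N(0,R)$, and apply Hopf to conclude $v\equiv0$ on the whole cross-section; the BC on the remaining boundary component then transfers $u\equiv0$ across it. If $v(\hat x,\hat y,T)=0$ with $(\hat x,\hat y)\in\O$, applying the strong maximum principle to the cross-section $(y,t)\mapsto v(\hat x,y,t)$ produces a boundary zero, reducing to the boundary subcase; the subcase $(\hat x,\hat y)\in\p\O$ is handled exactly as in Case~1 via $u\ge0$ and the BC.

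The main subtlety is Case~2: the degeneracy in the longitudinal direction precludes applying a classical parabolic strong maximum principle to $v$ on $\O$ directly, and the propagation of zeros across cross-sections and across components of $\p\O$ must be mediated by the non-degenerate $u$-equation on $\p\O$. The Hopf lemma step, which requires both $v=0$ and $\p_n v=0$ on a portion of the transverse boundary, is where the uniform parabolicity in $(y,t)$ (preserved after freezing $x$) becomes essential.
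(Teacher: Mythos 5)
Your proposal is correct and follows essentially the same route as the paper's proof: the same reduction to the nonnegative difference supersolution, the same case split by which equation is uniformly parabolic, and the same interplay of the strong maximum principle, the Hopf lemma on frozen cross-sections, and the boundary coupling to propagate the zero between components. The only (welcome) refinements are the cleaner ODE minimum argument in Case 1 and your explicit handling of the disconnected boundary when $N=1$, which the paper passes over silently.
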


\begin{remark}
\label{Re26} By looking at the proof of Proposition \ref{Pr25}, it follows immediately that the comparison principles also hold if we add to \eqref{51} and \eqref{54} a constant drift term in the $x$-direction.
\end{remark}

\setcounter{equation}{0}
\section{Liouville-type result and long time behavior}
\label{section3}
In this section we discuss the asymptotic behavior as $t\to+\infty$ of
solutions of the Cauchy problem. First of all, for later purposes, 
we study the long time behavior of solutions of Problem \eqref{I1} with an 
additional drift term and which start from a specific class of initial data. 
Next, we show that
the limit in time of a solution of \eqref{I1} is always constrained
between two positive steady states that do not depend on the 
$x$-variable 
and are rotationally invariant in $y$. Finally, we classify
all the steady states with these symmetries, showing that there is a unique nontrivial 
one, which is a global attractor for our problem.

\begin{proposition}
\label{Pr31}
Let $(\un u,\un v)\not\equiv(0,0)$ be a nonnegative generalized stationary subsolution, in the sense of Proposition \ref{Pr24}, of \eqref{pardrift}
which has bounded support and is rotationally invariant in $y$. Then, the solution $(\tilde u, \tilde v)$ of \eqref{pardrift} with $(\un u,\un v)$ as initial datum satisfies
\begin{equation}
\label{ltb}
\lim_{t\to+\infty}(\tilde u, \tilde v)=(U,V),
\end{equation}
locally uniformly in $\overline\Omega$, where $(U,V)$ is a positive stationary solution of \eqref{pardrift}, which 
is rotationally invariant in $y$ and $x$-independent.

\begin{proof}
By the comparison principle of Proposition~\ref{Pr24}, $(\tilde u,\tilde
v)\geq(\un{u},\un{v})$ for all times. Hence, by
Proposition \ref{Pr21}, $(\tilde u,\tilde v)$ is nondecreasing in time, and it
is actually strictly increasing by the strong comparison result, because
otherwise $(\tilde u,\tilde v)=(\un{u},\un{v})$ for all times,
which is impossible since $(\un{u},\un{v})$ is not a solution of \eqref{I1}.
From this monotonicity in $t$, we obtain that $(\tilde u,\tilde v)$ tends, as
$t\to+\infty$, locally uniformly in $\overline\O$ to a nonnegative bounded
steady state $(U,V)$ of
\eqref{pardrift}, which proves \eqref{ltb}.

As for the symmetries of $(U,V)$, we have that it is rotationally invariant in $y$ since the Cauchy problem itself is rotationally invariant in $y$ and has a unique solution. To prove that it is also $x$-independent, we ``slide'' the initial datum $(\un{u},\un{v})$ as follows.

From the monotone convergence, we have that $(U,V)>(\un{u},\un{v})$ and, since the latter has compact
support, $(\un{u}(\cdot\pm h,\cdot),\un{v}(\cdot\pm h,\cdot))$ still lies
below $(U,V)$ for sufficiently small $h$. It follows from Propositions \ref{Pr24} and
\ref{Pr21} that the solution to \eqref{I1} emerging from $(\un{u}(\cdot\pm
h,\cdot),\un{v}(\cdot\pm h,\cdot))$ lies below $(U,V)$ for all times. But,
by the $x$-translation invariance of the system, such solution is simply the
translation in the $x$ variable by $\pm h$ of $(\tilde u,\tilde v)$ and
therefore it converges to the corresponding translation of 
$(U,V)$ as $t\to+\infty$. We have eventually shown that these
translations of $(U,V)$ lie, for $h$ small enough, below $(U,V)$ itself,
which immediately implies that $(U,V)$ does not depend on $x$.
\end{proof}
\end{proposition}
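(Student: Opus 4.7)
The plan is to exploit the fact that $(\un u,\un v)$ is a \emph{stationary} subsolution, so that the solution of \eqref{pardrift} starting from it is automatically nondecreasing in time; monotone convergence will then produce a steady state $(U,V)$, whose symmetries can be read off from the uniqueness of the Cauchy problem together with a sliding argument in~$x$.

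First, by Proposition~\ref{Pr24} applied with $(\un u,\un v)$ as the generalized subsolution and $(\tilde u,\tilde v)$ itself as supersolution, one gets $(\tilde u(\cdot,t),\tilde v(\cdot,t))\ge(\un u,\un v)$ for every $t>0$. In particular, for any $h>0$, $(\tilde u(\cdot,h),\tilde v(\cdot,h))\ge(\un u,\un v)=(\tilde u(\cdot,0),\tilde v(\cdot,0))$; the standard comparison principle of Proposition~\ref{Pr21}, applied to the solution $(\tilde u,\tilde v)$ and its time-shift by $h$, then yields monotonicity in $t$. This is actually strict by the strong part of Proposition~\ref{Pr21}: otherwise $(\tilde u,\tilde v)\equiv(\un u,\un v)$ for all $t$ by uniqueness (Proposition~\ref{Pr22}), contradicting the hypothesis that $(\un u,\un v)$ is not a stationary solution. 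Boundedness of the orbit is secured by comparison with the constant pair $(\nu M/\mu,M)$, which is a stationary supersolution of \eqref{pardrift} for every $M\ge1$ with $f(M)\le0$ (the compatibility $\nu M-\mu(\nu M/\mu)=0$ and the vanishing of the normal derivative make the boundary and second equations automatic), chosen large enough to dominate the initial datum.

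Interior parabolic Schauder estimates combined with this monotonicity upgrade the pointwise convergence to locally uniform convergence in $\ov\O$ and show that the limit $(U,V)$ is a bounded stationary solution of \eqref{pardrift}. Positivity follows from Proposition~\ref{Pr21}: if $U$ or $V$ vanished at some point, the strong comparison with the trivial solution would force $(U,V)\equiv(0,0)$, contrary to $(U,V)\ge(\un u,\un v)\not\equiv(0,0)$. Rotational invariance of $(U,V)$ in $y$ is inherited from that of the initial datum via Proposition~\ref{Pr22}, since the system is itself rotationally invariant in $y$. For the $x$-independence I would use the sliding method: strict monotonicity in $t$ gives $(U,V)>(\un u,\un v)$ pointwise, and since $(\un u,\un v)$ has compact support, the difference is bounded away from zero on a fixed compact neighborhood of this support; uniform continuity of $\un u$ and $\un v$ then yields $(\un u(\cdot\pm h,\cdot),\un v(\cdot\pm h,\cdot))\le(U,V)$ for $|h|$ small. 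Applying Proposition~\ref{Pr24} to the Cauchy problem with the translated datum, and using the $x$-translation invariance of \eqref{pardrift}, one obtains $(\tilde u(\cdot\pm h,\cdot,t),\tilde v(\cdot\pm h,\cdot,t))\le(U,V)$ for all $t>0$; letting $t\to+\infty$ gives $(U(\cdot\pm h,\cdot),V(\cdot\pm h,\cdot))\le(U,V)$ for small $|h|$, and combining the inequalities for $+h$ and $-h$ forces equality, whence $(U,V)$ is $x$-independent.

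The main difficulty is the sliding step, which must go through the \emph{generalized} comparison principle of Proposition~\ref{Pr24} rather than the classical Proposition~\ref{Pr21}: the translated datum is in general only a generalized subsolution (it jumps to zero outside its shifted support), and the compact-support assumption is precisely what allows one to fit it below $(U,V)$ and then propagate the inequality in time.
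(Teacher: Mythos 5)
Your proposal is correct and follows essentially the same route as the paper: comparison with the stationary generalized subsolution via Propositions \ref{Pr24} and \ref{Pr21} to get strict monotonicity in time, monotone convergence to a bounded steady state, rotational invariance from uniqueness of the Cauchy problem, and $x$-independence by sliding the compactly supported datum. The extra details you supply (the constant supersolution bounding the orbit, positivity of the limit, and the parabolic estimates behind the locally uniform convergence) are points the paper leaves implicit, but they do not change the argument.
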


\begin{theorem}
\label{Th31}
For every nonnegative, bounded $(u_0,v_0)\not\equiv(0,0)$, there exist two
positive steady states of \eqref{I1}, denoted by $(U_1,V_1)$ and $(U_2,V_2)$,
which are $x$-independent and rotationally invariant in $y$ and such that the
solution of \eqref{I1} starting from the initial datum $(u_0,v_0)$ satisfies
\begin{equation}
\label{31}
(U_1,V_1)\leq\liminf_{t\to+\infty}(u,v)
\leq\limsup_{t\to+\infty}(u,v)\leq (U_2,V_2),
\end{equation}
where the first inequality holds locally uniformly in $\overline\O$ and the last one uniformly.
\begin{proof}
It is easy so see that the pair $(\ov{u},\ov{v})$, with
\begin{equation*}
\ov{v}=\max\left\{1,\|v_0\|_{\infty},\frac{\mu}{\nu}\|u_0\|_{\infty}\right\}, \qquad \ov{u}=\frac{\nu}{\mu}\ov{v},
\end{equation*}
is a stationary supersolution to \eqref{I1} which is above $(u,v)$ at $t=0$. As
a consequence of Proposition \ref{Pr21},
we have that the solution of \eqref{I1} starting from $(\ov{u},\ov{v})$ lies
always above $(u,v)$ and tends non-increasingly, 
as $t\to+\infty$, to a positive bounded steady state $(U_2,V_2)$ of \eqref{I1}. 
It also follows from the symmetries of the system, that 
$(U_2,V_2)$ inherits from $(\ov{u},\ov{v})$ the
$x$-independence and the radial symmetry in $y$.
As a consequence, the convergence is uniform in $\ov\O$ and the last inequality in \eqref{31} is thereby proved.

To prove the first one, consider $R_1$ such that the eigenvalue problem
\begin{equation}\label{33}
  \left\{ \begin{array}{ll}
  -\D\phi=\phi & \text{in } B_{N}(0,R_1),   \\
  \phi=0 & \text{on } \p B_{N}(0,R_1),
  \end{array} \right.
\end{equation}
admits a positive solution $\phi$. Such eigenfunction is therefore the (unique 
up to a scalar multiple) principal eigenfunction of $-\Delta$ in 
$B_{N}(0,R_1)$ under Dirichlet boundary condition, and the 
associated eigenvalue is $1$.
%
It is well known that $\phi(y)=\psi(|y|)$ with $\psi:(0,R_1)\to\R$ is decreasing and satisfies $\psi'(0)=0$.
Consider now the pair $(\un{u},\un{v})$ defined by
\begin{equation*}
(\un{u},\un{v}):=\left\{\begin{array}{ll}
\cos(\a x)\left(1,\g\phi(\b y)\right) &\text{if $|x|\leq \frac{\pi}{2\a}$} \\
(0,0) &\text{otherwise}, 
\end{array}\right.
\end{equation*}
with $\a,\g>0$ and $0<\b<R_1/R$ to be chosen. Plugging $(\un{u},\un{v})$ 
into the third equation of \eqref{I1}, we obtain
\begin{equation}
\label{34}
\g=\frac{\mu}{d\b\psi'(\b R)+\nu \psi(\b R)}
\end{equation}
which is positive for $0<\b<\ov{\b}<R_1/R$, where $\ov{\b}$ is the first 
positive zero of the denominator. We now look for $\a,\b$ so that
$(\un{u},\un{v})$ is a generalized subsolution - in the sense of Proposition
\ref{Pr24} - to
\begin{equation*}
  \left\{ \begin{array}{l}
  v_t-d\D v=\frac{f'(0)}{2}v   \\
  u_t-D\D_{\p\O}u=\nu v|_{\{|y|=R\}}-\mu u.  \\ 
  \end{array} \right.
\end{equation*}
Due to \eqref{34}, this results in the system
\begin{equation}
\label{36}
  \left\{ \begin{array}{l}
  d\a^2+d\b^2\leq f'(0)/2   \\
  \displaystyle{D\a^2\leq\frac{-\mu d\b\psi'(\b R)}{d\b\psi'(\b R)+\nu\psi(\b R)}.}  \\ 
  \end{array} \right.
\end{equation}
For fixed $\b<\min\left\{\sqrt{\frac{f'(0)}{2d}},\ov{\b}\right\}$, since the 
right-hand 
side of the second inequality of \eqref{36} is positive, it is possible to take 
$\a\sim 0$ in such a way \eqref{36} is satisfied. Thanks to \eqref{I2} we have 
that, for $\e\sim 0$, $\e(\un{u},\un{v})$ is a
compactly 
supported generalized subsolution to \eqref{I3}.
Reducing $\e$ if need be, we can further assume 
that $\e(\un{u},\un{v})$ lies below the pair $(u,v)$ at time $1$, the latter
being strictly
positive by Proposition \ref{Pr21}. Again by Proposition \ref{Pr21},
the order is preserved between $(u,v)$ shifted by $1$ in time and the solution
$(\tilde u,\tilde v)$ to \eqref{I1} emerging from $\e(\un{u},\un{v})$. 
The proof is concluded applying Proposition \ref{Pr31}.
\end{proof}
\end{theorem}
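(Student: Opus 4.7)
The plan is to prove the two inequalities separately: the upper bound follows from comparison with a spatially constant stationary supersolution, while the lower bound requires constructing a compactly supported generalized stationary subsolution and invoking Proposition~\ref{Pr31}.

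For the upper bound I would take $\ov v := \max\{1,\|v_0\|_\infty,(\m/\n)\|u_0\|_\infty\}$ and $\ov u := (\n/\m)\ov v$. Since $\ov v\geq 1$, the assumption \eqref{I2} (together with the negative extension of $f$ outside $[0,1]$) yields $f(\ov v)\leq 0$, and the boundary coupling $d\p_n\ov v=\m\ov u-\n\ov v$ is satisfied with equality. Hence $(\ov u,\ov v)$ is a stationary supersolution of \eqref{I1} dominating $(u_0,v_0)$, and Proposition~\ref{Pr21} gives $(u,v)\leq(\ov u,\ov v)$ for all $t>0$. The solution $(u_*,v_*)$ with initial datum $(\ov u,\ov v)$ is nonincreasing in $t$ (by comparison with its own time translate) and is $x$-independent and rotationally invariant in $y$ (by the corresponding symmetries of the Cauchy problem and the uniqueness in Proposition~\ref{Pr22}), so it converges monotonically and uniformly to a stationary limit $(U_2,V_2)$ with the same symmetries. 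Positivity of $(U_2,V_2)$ comes from the fact that the constant pair $(\n/\m,1)$ is itself a stationary solution lying below $(\ov u,\ov v)$, hence also below $(u_*,v_*)$ at every time.

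For the lower bound I would build $(\un u,\un v)$ by separation of variables. Let $\phi(y)=\psi(|y|)$ be the first Dirichlet eigenfunction of $-\D$ on the ball $B_N(0,R_1)$, with $\psi$ radially decreasing and $\psi'(0)=0$, and set
\[
(\un u,\un v)(x,y) := \cos(\a x)\bigl(1,\g\psi(\b|y|)\bigr) \quad\text{for } |x|\leq \pi/(2\a),
\]
extended by zero elsewhere. I choose $0<\b<R_1/R$ so that $\psi(\b|y|)$ stays positive on the cross-section, and fix $\g>0$ by imposing equality in the third equation of \eqref{I1} (this is possible as long as $\b$ lies below the first positive zero of $d\b\psi'(\b R)+\n\psi(\b R)$). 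The remaining two equations, with $f(v)$ replaced by its linearization $\tfrac12 f'(0)v$ near $0$, reduce to two algebraic inequalities that can be satisfied by taking $\b$ small and $\a$ smaller still. Multiplying by a small $\e>0$, the KPP assumption \eqref{I2} upgrades $\e(\un u,\un v)$ to a generalized stationary subsolution of \eqref{I1} itself, in the sense of Proposition~\ref{Pr24}. Since $(u(\cdot,1),v(\cdot,1))$ is strictly positive in $\ov\O$ by the strong comparison of Proposition~\ref{Pr21}, a further reduction of $\e$ makes $\e(\un u,\un v)\leq (u(\cdot,1),v(\cdot,1))$. Proposition~\ref{Pr31} applied to the solution emerging from $\e(\un u,\un v)$ then yields a positive, $x$-independent, $y$-radial stationary solution $(U_1,V_1)$ as the large-time limit, and Proposition~\ref{Pr24} transfers the inequality to $(u,v)$, giving $\liminf_{t\to+\infty}(u,v)\geq(U_1,V_1)$ locally uniformly in $\ov\O$.

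I expect the main obstacle to be the simultaneous calibration of $\a,\b,\g$: positivity of $\g$, the bulk inequality of the form $d(\a^2+\b^2)\leq f'(0)/2$, and the boundary inequality controlling $D\a^2$ must all be achieved at once. The first two rely on the KPP hypothesis $f'(0)>0$ and on the freedom to shrink $\b$ below $R_1/R$; the third additionally requires $\psi'(\b R)<0$, which holds as soon as $\b R<R_1$ by the radial monotonicity of the Dirichlet eigenfunction. The compatibility of all three, together with the need to work in the generalized (compactly supported, not $C^1$) subsolution framework of Proposition~\ref{Pr24} rather than with classical subsolutions, is the technical core of the argument.
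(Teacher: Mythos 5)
Your proposal is correct and follows essentially the same route as the paper: the constant stationary supersolution $\bigl(\tfrac{\nu}{\mu}\ov v,\ov v\bigr)$ with $\ov v=\max\{1,\|v_0\|_\infty,\tfrac{\mu}{\nu}\|u_0\|_\infty\}$ for the upper bound, and the compactly supported generalized subsolution $\cos(\a x)(1,\g\phi(\b y))$ built from the Dirichlet eigenfunction, calibrated through the boundary condition and fed into Proposition~\ref{Pr31}, for the lower bound. The calibration issue you flag is handled exactly as you describe (fix $\b$ small and positive, then take $\a\sim0$), so there is nothing to add.
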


The previous result indicates that we have to focus the attention on the steady 
states of \eqref{I1} or, more generally of \eqref{pardrift}, with the symmetry 
properties specified there. In this sense we have the following Liouville-type 
result.

\begin{proposition}
\label{Pr32}
The unique nonnegative bounded stationary states of \eqref{pardrift}
which are $x$-independent and rotationally invariant in $y$, are $(0,0)$ and $\left(\frac{\nu}{\mu},1\right)$.

\begin{proof}
Let $(u,v)$ be a steady state as in the statement of the proposition. Then $u$ is constant and
there exists a nonnegative
function $\Psi\in\mc{C}^1([0,R])\cap\mc{C}^2((0,R])$, with $\Psi'(0)=0$, such 
that $v(x,y)=\Psi(|y|)$. It follows from the second equation in \eqref{pardrift} that 
$u\equiv\frac\nu\mu\Psi(R)$ and thus the other two equations read
\begin{equation}
\label{38}
\left\{ \begin{array}{l}
-d\Psi''(r)-\frac{d(N-1)}{r}\Psi'(r)=f(\Psi(r)) \qquad r\in(0,R), \\
\Psi'(R)=0=\Psi'(0).
\end{array} \right.
\end{equation}
To prove the result it is sufficient to show that $\Psi(R)=1$ or $\Psi(R)=0$.
Suppose that this is not the case. Then $\Psi''(R)\neq 0$ and $\Psi'$ does not
vanish in a left neighborhood of $R$, being positive if $0<\Psi(R)<1$ and
negative if $\Psi(R)>1$. Set 
\begin{equation*}
\r:=\max\{r\in[0,R): \Psi'(r)=0\}.
\end{equation*}
From this definition we have that $\Psi'$ has a fixed strict sign in $(\r,R)$, 
which is the same as $1-\Psi(R)$. If this sign is positive then, for $\r<r<R$, 
we have that $0<\Psi(r)<\Psi(R)<1$, whence $f(\Psi(r))>0$ and the first
equation in \eqref{38} eventually yields $\Psi''(r)<0$. This is impossible 
because $\Psi'(\r)=\Psi'(R)=0$. 
If instead $1-\Psi(R)<0$, then we obtain $\Psi'(r)<0$ for $\r<r<R$, which 
implies $\Psi(r)>\Psi(R)>1$ and thus $f(\Psi(r))<0$. Therefore, in such case, 
the first equation in \eqref{38} yields $\Psi''(r)>0$ for $\r<r<R$, which again contradicts 
$\Psi'(\r)=\Psi'(R)=0$.
%
%
\end{proof}
\end{proposition}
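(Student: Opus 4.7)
My plan is to reduce the stationary PDE system to a second-order ODE for a radial profile with a Neumann-type condition at $r=R$, show that this condition combined with the KPP sign structure of $f$ forces $\Psi(R)\in\{0,1\}$, and then invoke ODE uniqueness to conclude that $\Psi$ is identically $0$ or $1$. The reduction goes as follows: $x$-independence kills the drift and every $x$-derivative, while rotational invariance in $y$ gives $v(x,y)=\Psi(|y|)$ with $\Psi\in\mc{C}^1([0,R])\cap\mc{C}^2((0,R])$ and $\Psi'(0)=0$ (smoothness at the origin); on $\p\O$, rotational invariance on a sphere forces $u$ to be a constant. The second equation of \eqref{pardrift} then yields $u\equiv\frac{\n}{\m}\Psi(R)$, and inserting this into the third equation gives $d\Psi'(R)=\m u-\n\Psi(R)=0$; the first equation reduces to the radial ODE in \eqref{38} with the two Neumann data $\Psi'(0)=\Psi'(R)=0$.

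To prove $\Psi(R)\in\{0,1\}$ I would argue by contradiction, supposing $\Psi(R)=\alpha\notin\{0,1\}$. Evaluating the ODE at $r=R$ with $\Psi'(R)=0$ gives $d\Psi''(R)=-f(\alpha)\ne 0$, so $\Psi'$ is strictly positive in a left neighborhood of $R$ when $0<\alpha<1$ and strictly negative when $\alpha>1$. Since the zero set of $\Psi'$ inside $[0,R)$ is closed, contains $0$, and does not cluster at $R$, the quantity $\r:=\max\{r\in[0,R):\Psi'(r)=0\}$ is well-defined, and $\Psi'$ has a fixed strict sign on $(\r,R)$. If $0<\alpha<1$, then $\Psi$ is strictly increasing on $(\r,R)$, so $0\le\Psi(\r)<\Psi(r)<\alpha<1$, whence $f(\Psi(r))>0$; the ODE then yields $\Psi''(r)=-\frac{N-1}{r}\Psi'(r)-\frac{f(\Psi(r))}{d}<0$ on $(\r,R)$, i.e.\ $\Psi'$ is strictly decreasing from $0$ at $\r$ to $0$ at $R$, a contradiction. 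If instead $\alpha>1$, then $\Psi$ is strictly decreasing on $(\r,R)$, so $\Psi(r)>\alpha>1$ and $f(\Psi(r))<0$ by the negative extension of $f$; both terms in the above expression for $\Psi''(r)$ are then positive, so $\Psi'$ is strictly increasing from $0$ to $0$, again a contradiction. To finally pass from $\Psi(R)\in\{0,1\}$ to $\Psi$ being the corresponding constant, I would invoke Picard--Lindel\"of on the regular backward Cauchy problem on $(0,R]$ with data $(\Psi(R),\Psi'(R))=(0,0)$ or $(1,0)$, for which $\Psi\equiv 0$ and $\Psi\equiv 1$ are the unique solutions, extended to $r=0$ by continuity.

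I expect the main (albeit modest) technical obstacle to be the extraction of $\r$ and the verification that $\Psi'$ has a \emph{strict} sign on $(\r,R)$, which hinges on the cancellation $\Psi''(R)=-f(\alpha)/d\ne 0$ provided by the Neumann datum $\Psi'(R)=0$. This is precisely what reduces the question to a single interval on which the sign of $f(\Psi)$ is controlled by the value of $\alpha$, and the whole argument pivots on the compatibility between the boundary condition at $R$ and the monotonicity dictated by the ODE.
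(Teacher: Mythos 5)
Your proof is correct and follows essentially the same route as the paper: the same reduction to the radial ODE \eqref{38} with the double Neumann condition, the same extraction of $\r$, and the same sign/concavity contradiction forcing $\Psi(R)\in\{0,1\}$. The only difference is that you make explicit, via backward ODE uniqueness, the final step that the paper leaves implicit in the phrase ``it is sufficient to show that $\Psi(R)=1$ or $\Psi(R)=0$'', which is a welcome clarification rather than a deviation.
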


As an immediate consequence of Theorem \ref{Th31} and Proposition \ref{Pr32}, we characterize the long time behavior of 
solutions of the Cauchy problem associated with \eqref{I1}.

\begin{corollary}
\label{Co33}
Any solution $(u,v)$ of \eqref{I1} starting from a bounded, nonnegative initial
datum $(u_0,v_0)\not\equiv(0,0)$ satisfies
\begin{equation}
\label{39}
\begin{split}
\lim_{t\to+\infty}u(x,y,t)&=\frac{\nu}{\mu}, \qquad \text{locally uniformly in $\p\O$,} \\
\lim_{t\to+\infty}v(x,y,t)&=1, \qquad \text{ locally uniformly in 
$\overline\O$.}
\end{split}
\end{equation}
In particular, restricting to stationary solutions of \eqref{I1}, we obtain that the unique nonnegative bounded steady states are
$(0,0)$ and
$\left(\frac{\nu}{\mu},1\right)$.
\end{corollary}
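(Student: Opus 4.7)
The plan is to combine Theorem \ref{Th31} with the Liouville-type Proposition \ref{Pr32} in a direct sandwich argument, and then obtain the stationary classification as a by-product.

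First I would invoke Theorem \ref{Th31} to produce the two positive bounded steady states $(U_1,V_1)$ and $(U_2,V_2)$ of \eqref{I1}, both $x$-independent and rotationally invariant in $y$, satisfying
\begin{equation*}
(U_1,V_1)\le\liminf_{t\to+\infty}(u,v)\le\limsup_{t\to+\infty}(u,v)\le(U_2,V_2),
\end{equation*}
with the upper bound holding uniformly and the lower bound locally uniformly in $\overline\O$. Since both $(U_1,V_1)$ and $(U_2,V_2)$ are \emph{positive}, Proposition \ref{Pr32} (applied with no drift, i.e.\ $c=0$) forces each of them to coincide with $\bigl(\tfrac{\nu}{\mu},1\bigr)$, as the only other candidate $(0,0)$ is excluded by positivity. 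The two inequalities then collapse, yielding \eqref{39} with locally uniform convergence in $\overline\O$ (in fact uniform for $u$ on $\p\O$, though only the local statement is asserted).

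For the second assertion, let $(u,v)$ be any nonnegative bounded stationary solution of \eqref{I1} with $(u,v)\not\equiv(0,0)$. Regarding $(u,v)$ as the solution of the Cauchy problem with itself as initial datum, the first part of the corollary applies and gives
\begin{equation*}
\lim_{t\to+\infty}(u,v)=\Bigl(\tfrac{\nu}{\mu},1\Bigr)\quad\text{locally uniformly in }\overline\O.
\end{equation*}
But the left-hand side is simply $(u,v)$ itself, because the solution is time-independent. Hence $(u,v)\equiv\bigl(\tfrac{\nu}{\mu},1\bigr)$ pointwise, and together with the trivial steady state $(0,0)$ this exhausts the nonnegative bounded stationary solutions.

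There is no substantive obstacle here: the entire statement is a bookkeeping consequence of the sandwich in Theorem \ref{Th31} and the classification in Proposition \ref{Pr32}, the only mild subtlety being to notice that the Liouville result can be invoked for $(U_1,V_1),(U_2,V_2)$ without having to re-establish the required symmetries, since these are exactly the ones provided by Theorem \ref{Th31}.
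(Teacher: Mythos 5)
Your argument is correct and is exactly the paper's intended one: the corollary is stated there as an immediate consequence of Theorem \ref{Th31} and Proposition \ref{Pr32}, obtained by noting that the positive steady states $(U_1,V_1)$ and $(U_2,V_2)$ must both equal $\left(\frac{\nu}{\mu},1\right)$ so the sandwich collapses, and the stationary classification follows by viewing a steady state as a time-independent solution of the Cauchy problem. No gaps.
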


\begin{remark} 
\label{Re35}
\begin{enumerate}[(i)]
\item \label{Re35i} Observe that this last statement of Corollary \ref{Co33} is much stronger than
Proposition \ref{Pr32} with $c=0$, because it holds without knowing a priori 
the symmetry of solutions.
\item \label{Re35ii} In \eqref{39}, thanks to Theorem \ref{Th31}, the inequalities \lq\lq$\leq$\rq\rq related to the $\limsup$ hold uniformly in $\ov\O$.
\end{enumerate}
\end{remark}

\setcounter{equation}{0}
\section{Asymptotic speed of spreading}
\label{section4}
In this section we prove the existence of a value $c^*$ such that \eqref{I1}
admits a supersolution moving with speed $c^*$ and some generalized subsolutions 
moving with speed less than and arbitrarily close to $c^*$. This value $c^*$ will be identified 
as the asymptotic speed of spreading appearing in Theorem
\ref{Th11}. The construction of $c^*$ will also provide some key information
about its dependence on $D$ and $R$ that will be used in the following sections 
to derive Theorem \ref{Th12}.

The starting point to find $c^*$ is the analysis of {\em plane wave} solutions 
for the linearization of \eqref{I1} around $v=0$.
This is achieved in Section \ref{sec:waves} through 
a geometrical construction. The plane waves are supersolutions of \eqref{I1} by 
the KPP hypothesis and their existence immediately implies \eqref{asp1}.
In Section \ref{sec:sub} we construct the
generalized subsolutions and prove~\eqref{asp2}.

\subsection{Plane wave solutions}\label{sec:waves}

Consider the linearization of \eqref{I1} around $v=0$:
\begin{equation}
\label{41}
  \left\{ \begin{array}{lll}
  v_t-d\D v=f'(0)v & \text{ in } \O\times\R_+  \\
  u_t-D\D_{\p\O}u=\nu v-\m u & \text{ on } \p\O\times\R_+ \\ 
  d\p_n v=\m u-\nu v & \text{ on } \p\O\times\R_+.
  \end{array} \right.
\end{equation}
We look for plane wave solutions in the form
\begin{equation}
\label{42}
(\ov{u},\ov{v})=e^{\a(x+ct)}(1,\g\phi(\b,y)),
\end{equation}
with $\a,\g>0$, $\b\in\R$ and $\phi$ positive, i.e., moving leftward at a 
velocity $c$ and decaying exponentially as $x+ct\to-\infty$.
In contrast with \cite{BRR1}, but in analogy with \cite{T}, we need to consider 
two types of plane waves, corresponding to a dichotomy in the definition of 
$\phi$:
\begin{equation*}
\phi(\b,y):=\begin{cases}
             \phi_1(\b y) & \text{if }\beta\geq0\\
             \phi_2(\b y) & \text{if }\beta<0.\\
            \end{cases}
\end{equation*}
The functions $\phi_1$ and $\phi_2$ are related to the eigenvalue 
problem for the Laplace operator in a ball and in the whole space respectively.
We will show that $\phi$ is differentiable, reflecting a continuous transition from $\phi_1(\beta y)$ to $\phi_2(\beta 
y)$ at $\beta=0$, uniformly in $y$, due to the fact that the support of 
$\phi_1(\beta y)$ becomes the whole $\R^N$ as $\beta\da0$.
The construction of the subsolutions 
that will be carried out in Section \ref{sec:sub} makes use of one or the other 
type of plane wave depending on the values of the parameters of the problem. 
This dichotomy will give rise to the two different monotonicities with respect 
to 
$R$ stated in Theorem~\ref{Th12}\eqref{Th12ii}.

We start with $\beta\geq0$. In this case we take $\phi(\b,y):=\phi_1(\b y)$ in 
\eqref{42}, where $\phi_1$ is the positive 
eigenfunction $\phi_1$ of problem 
\eqref{33}, normalized by $\phi_1(0)=1$. We therefore impose $\beta\leq 
R_1/R$.
We recall that $\phi_1(z)=\psi_1(\left|z\right|)$ 
for a real
analytic decreasing function $\psi_1$ on $[0,R_1]$ such that $\psi_1'(0)=0$. 
Plugging this expression into \eqref{41} we are driven to the system
\begin{equation}
\label{43}
  \left\{ \begin{array}{l}
  -d\a^2+d\b^2+c\a=f'(0), \\
  -D\a^2+c\a=\nu\g\psi_1(\b R)-\mu, \\
  d\g\b\psi_1'(\b R)=\mu-\nu\g\psi_1(\b R).
  \end{array} \right.
\end{equation}
Solving the last equation for $\g$ yields
\begin{equation}
\label{44}
\g=\frac{\mu}{d\b\psi_1'(\b R)+\nu\psi_1(\b R)}.
\end{equation}
In order to have $\g>0$ we restrict to $0\leq\b<\ov{\b}$, where
$\ov{\b}\in(0,R_1/R)$ is the first positive
zero of the function $r\mapsto dr\psi_1'(rR)+\nu\psi_1(rR)$,
which is positive for $r=0$ and negative for
$r=R_1/R$. By plugging \eqref{44} into the second equation of \eqref{43} we
obtain 
\begin{equation*}
-D\a^2+c\a=\frac{-\mu d\b\psi_1'(\b R)}{d\b\psi_1'(\b R)+\nu\psi_1(\b R)}=:\chi_1(\b)
\end{equation*}
and, solving for $\a$,
\begin{equation}
\label{45}
\a_{D}^{\pm}(c,\b)=\frac{1}{2D}\left(c\pm\sqrt{c^2-4D\chi_1(\b)}\right).
\end{equation}
Observe that $\chi_1$ is positive in $(0,\ov{\b})$, satisfies
$\chi_1(0)=0=\chi_1'(0)$ and $\chi_1(\b)\to+\infty$ as $\b\ua\ov{\b}$. 
We now use a property of the function $\psi_1$ that will be crucial also in 
the sequel: $\log \psi_1$ is concave (see e.g.~\cite{BL}). It 
implies that the negative function
$\psi_1(r)/\psi_1'(r)$ is increasing and then the same is true for $r\mapsto \psi_1(r)/(r\psi_1'(r))$.
Reasoning on $1/\chi_1$, we find that $\chi_1$ is increasing too. Therefore, for every $c>0$ there exists a unique value of $\b\in(0,\ov{\b})$,
denoted by $\tilde{\b}(c)$, such that 
\begin{equation*}
c^2=4D\chi_1(\tilde{\b}(c)).
\end{equation*}
There holds
\begin{equation*}
\lim_{c\da 0}\tilde{\b}(c)=0, \qquad \lim_{c\to +\infty}\tilde{\b}(c)=\ov{\b}. 
\end{equation*}
The functions $\a_{D}^{\pm}$ are real-valued for
$\b\in(0,\tilde{\b}(c)]$, where they satisfy
\begin{equation*}
\b\mapsto\a_{D}^{+}(c,\b) \text{ is decreasing, } \qquad
\b\mapsto\a_{D}^{-}(c,\b) \text{ is increasing.} 
\end{equation*}
With regard to the monotonicity in $c$, we have that
\begin{equation*}
c\mapsto\a_{D}^{+}(c,\b) \text{ is increasing, } \qquad c\mapsto\a_{D}^{-}(c,\b) \text{ is decreasing }
\end{equation*}
and that the region delimited by $\a_D^{\pm}(c,\cdot)$ invades, increasingly, the strip $[0,\ov{\b}]\times(0,+\infty)$ in the
$(\b,\a)$-plane as $c\to+\infty$, while it shrinks to $(0,0)$ as $c\da 0$.


On the other hand the first equation in \eqref{43} represents the two branches of a hyperbola
\begin{equation*}
\a_{d}^{\pm}(c,\b):=\frac{1}{2d}\left(c\pm\sqrt{c^2-c_{\KPP}^2+4d^2\b^2}\right)
\end{equation*}
where $c_{\KPP}=2\sqrt{df'(0)}$. For
$c<c_{\KPP}$, the functions $\a_{d}^{\pm}(c,\b)$ are real-valued for $\b\geq\hat{\b}(c)$,
where $\hat{\b}(c)>0$ satisfies 
\begin{equation*}
c^2=c_{\KPP}^2-4d^2\hat{\b}(c)^2.
\end{equation*}
Observe that $\hat{\b}(c)\to\sqrt{f'(0)/d}$ as $c\da 0$. On the contrary, for
$c\geq c_{\KPP}$, the hyperbolas are defined for every $\b\geq 0$. In particular,
for $c=c_{\KPP}$, the hyperbolas degenerate into the straight lines
$\pm\b+c_{\KPP}/(2d)$. As $c$ increases to $+\infty$, the region lying in the
first quadrant between the curves $\a_{d}^{\pm}(c,\cdot)$
with $\b\geq \hat{\b}(c)$ if $c<\cKPP$ and $\b\geq 0$ if $c\geq\cKPP$, invades monotonically the whole quadrant. 

Passing to the construction of the second type of plane waves, we consider the 
positive radial eigenfunction $\phi_2$ of $\D\phi_2=\phi_2$ in $\R^N$, 
normalized by $\phi_2(0)=1$, and we look for solutions of \eqref{42} with 
$\phi(\b,y)=\phi_2(\b y)$
and this time $\beta\leq 0$. Let us recall the construction of $\phi_2$, 
because it provides some informations needed in the sequel. We look for
$\psi_2:\R_ -\to\R$ such that $\phi_2(z)=\psi_2(-\left| z\right|)$, $\psi_2'(0)=0$ and satisfying the
Bessel equation
\begin{equation}
\label{410}
\psi_2''(r)+\frac{N-1}{r}\psi_2'(r)-\psi_2(r)=0 \quad \text{ for } r<0.
\end{equation}
The desired solution of \eqref{410} is
\begin{equation}\label{psi2}
\psi_2(r)=\leftidx{_0}{F}{_1}(;\tau+1;\frac{r^2}{4}),
\end{equation}
where $\tau=N/2-1$ and $\leftidx{_0}{F}{_1}(;\tau+1;\frac{r^2}{4})$ is the generalized hypergeometric function defined as
\begin{equation}
\label{411}
\leftidx{_0}{F}{_1}(;\tau+1;z):=\sum_{n=0}^{\infty}\frac{\G(\tau+1)}{\G(\tau+1+n)} \frac{z^n}{n!}.
\end{equation}
Indeed $\psi_2(0)=1$ and, using the fundamental property of the Gamma function
\begin{equation*}
\G(\tau+2)=(\tau+1)\,\G(\tau+1) \quad \text{ for every $\tau>-1$},
\end{equation*}
we derive
\begin{equation}\label{psi2'}
\psi_2'(r)=\frac{r}{2(\tau+1)}\leftidx{_0}{F}{_1}(;\tau+2;\frac{r^2}{4})
\end{equation}
and, thus, $\psi_2'(0)=0$. Moreover, from \eqref{411} we have that $\psi_2(r)$ and $\psi_2'(r)$ are defined for all $r<0$, the former being positive and the latter negative.

By plugging $(\ov{u},\ov{v})=e^{\a(x+ct)}(1,\g \phi_2(\b y))$ into \eqref{41} 
and proceeding as in the 
previous case, we obtain the following system:
\begin{equation}
\label{412}
  \left\{ \begin{array}{l}
  -d\a^2-d\b^2+c\a=f'(0) \\
  -D\a^2+c\a=\displaystyle{\frac{-\mu d\b\psi_2'(\b R)}{d\b\psi_2'(\b R)+\nu\psi_2(\b R)}} \\
  \g=\displaystyle{\frac{\mu}{d\b\psi_2'(\b R)+\nu\psi_2(\b R)}}.
  \end{array} \right.
\end{equation}
Observe that, in this case, $\g>0$ without any restriction on $\b\leq0$. 
We now extend the previously defined functions $\a_{D}^{\pm}(c,\b)$ and 
$\a_{d}^{\pm}(c,\b)$ to negative values of $\b$. Solving the second equation of \eqref{412} 
for $\a$ gives 
\begin{equation}
\label{413}
\a_{D}^{\pm}(c,\b)=\frac{1}{2D}\left(c\pm\sqrt{c^2-4D\chi_2(\b)}\right),
\end{equation}
where
\begin{equation*}
\chi_2(\b)=\frac{-\mu d \b\psi_2'(\b R)}{d \b\psi_2'(\b R)+\nu\psi_2(\b R)}.
\end{equation*}
The function $\chi_2$ vanishes at $0$, is defined and negative for every 
$\b<0$. 
Moreover $\psi_2$ is log-convex (for the proof in the case $N>1$ see e.g. 
\cite{N}, while the case $N=1$ follows from direct computation, since in this 
case $\psi_2(r)=\cosh(r)$), which implies that $\chi_2$ is increasing for 
$\b<0$. 
As a consequence, $\b\mapsto\a_{D}^+(c,\b)$ is decreasing, for $\b<0$. Finally, it is easy to see that the region of the second quadrant delimited by this graph and the $\b$-axis invades monotonically such quadrant as $c\ua+\infty$.


\label{circle}
Now the first equation of \eqref{412} describes, for $c\geq c_{\KPP}$, 
a half-circle $(\b,\a_{d}^{\pm}(c,\b))$, $\beta\leq0$, which has center 
$(0,\frac{c}{2d})$ and radius
\begin{equation*}
\r(c)=\frac{\sqrt{c^2-c_{\KPP}^2}}{2d};
\end{equation*}
in particular it is defined for $0\geq \b\geq \hat{\b}(c):=-\r(c)$. 
The half-disk bounded by it shrinks to its center as $c\da c_{\KPP}$ and invades
monotonically the whole of the second quadrant as $c\to +\infty$.

In sum, we have defined the functions $\a_{D}^\pm$, $\a_{d}^\pm$ for $\beta$ 
ranging in the whole real line.
Direct computations show that, for every $c>0$, 
\begin{equation}
\label{prop1}
\lim_{\b\to 0}\a_{D}^+(c,\b)=\frac{c}{D}, \qquad \lim_{\b\to 
0}\p_\b\a_{D}^{+}(c,\b)=0,
\end{equation}
and, for $c>\cKPP$,
\begin{equation}
\label{prop2}
\lim_{\b\to 0}\a_{d}^{\pm}(c,\b)=\frac{c\pm\sqrt{c^2-\cKPP^2}}{2d}, \qquad \lim_{\b\to 0}\p_\b\a_{d}^{\pm}(c,\b)=0.
\end{equation}
As a consequence, the sets $\Si_D(c)$, $\Si_d(c)$ defined by
\begin{equation}
\label{curves}
\Si_D(c):=\{(\b,\a_D^{\pm}(c,\b)), \b\leq\tilde{\b}(c)\}, \qquad \Si_d(c):=\{(\b,\a_d^{\pm}(c,\b)), \b\geq\hat{\b}(c)\},
\end{equation}
are continuous curves in the $(\b,\a)$ upper half-plane, which are moreover 
differentiable if $c\neq\cKPP$. Thanks to the above mentioned monotonicity 
properties in $c$ of the functions $\a_{D}^{\pm}$ and $\a_{d}^{\pm}$, there 
exists $c^*$ such that the curves $\Si_D(c)$ and $\Si_d(c)$ do not intersect for 
$c<c^*$, touch for the first time, being tangent, for $c=c^*$ and are secant for 
$c>c^*$ (see Figure \ref{Fig41}). The $(\beta,\alpha)$ corresponding to the 
intersection points provide the desired plane waves, for any $c\geq 
c^*$.

Actually, we wish to know how many intersection points the curves $\Si_D$ and $\Si_d$ exactly have for $c\geq c^*$. The answer is given in the following result, which, apart from the importance that it will have in the rest of this work, is interesting because it provides information about the number of solutions of systems \eqref{43} and \eqref{412} using PDE tools, specifically the comparison principle. We think that this kind of technique may be crucial in the study of other reaction-diffusion systems.

\begin{proposition}
\label{Pr41}
The curves $\Si_D(c)$ and $\Si_d(c)$ defined in \eqref{curves} have, for $c\geq c^*$, at most two intersections. Moreover, for $c=c^*$, there is a unique intersection.

\begin{proof}
To prove the first part, assume by contradiction that there exist $(\b_i,\a_i)$, $i\in\{1,2,3\}$, with $\b_i\neq \b_j$ for $i\neq j$. By the monotonicity in $\b$ of the curves, which has been established above, we have that $\a_i\neq\a_j$ for $i\neq j$ and we can label the intersections so that
\begin{equation}
\label{order}
0<\a_1<\a_2<\a_3.
\end{equation}
As a consequence, problem \eqref{41}
admits three distinct solutions $(u_i,v_i)$, $i\in\{1,2,3\}$ of the form 
\eqref{42} with $(\a,\b,\g)=(\a_i,\b_i,\g_i)$, where $\g_i=\g(\b_i)$ is given 
by \eqref{44}. The function $(\tilde u,\tilde v):=(u_2-u_3,v_2-v_3)$ is also a 
solution to \eqref{41}. Thanks to 
\eqref{order}, and since $\g_1\phi(\b_1,y)>0$ in $\ov{B_N(0,R)}$, there exists 
$L>0$ such that, for every $y\in\ov{B_N(0,R)}$ and $t=0$,
\begin{equation*}
(u_1,v_1)>(\tilde u,\tilde v) \quad \text{ for $x\leq -L$}, \qquad\qquad(\tilde u, \tilde v)<(0,0) \quad \text{ for $x\geq L$}.
\end{equation*}
As a consequence, for large $k>0$, we have $k(u_1,v_1)(x,y,0)\geq(\tilde 
u,\tilde v)(x,y,0)$ in $\ov\O$. Setting
\begin{equation*}
k^*=\min_{k\geq0}\{k(u_1,v_1)(x,y,0)\geq(\tilde u,\tilde v)(x,y,0) \text{ for } 
(x,y)\in\ov\O\},
\end{equation*}
there exists $(x_0,y_0)\in\ov\O$ such that
\begin{equation*}
k^*u_1(x_0,y_0,0)=\tilde u(x_0,y_0,0) \qquad \text{ or } \qquad k^*v_1(x_0,y_0,0)=\tilde 
v(x_0,y_0,0).
\end{equation*}
Now, the conclusion of Proposition \ref{Pr21}, which obviously 
holds true for the linearized system \eqref{41}, ensures that 
$k^*(u_1,v_1)\equiv(\tilde u,\tilde v)$, which is impossible 
because $u_1$ is positive while $\tilde u$ is positive for large negative $x$ 
and negative for large positive $x$. 

Passing now to the second part of the statement, assume by contradiction that for $c=c^*$ the curves intersect in two distinct tangency points. By continuity and the monotonicity in $\b$ and $c$, each tangency point gives rise to two intersections for $c>c^*$, $c\sim c^*$, which is excluded by the first part of the Proposition.
\end{proof}
\end{proposition}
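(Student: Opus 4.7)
The plan is to establish both parts of the proposition by exploiting the correspondence between intersection points of $\Si_D(c)$ and $\Si_d(c)$ and plane wave solutions of the linearized system \eqref{41}, then leverage the comparison principle from Proposition \ref{Pr21}, which holds for the linearized system \eqref{41} as well (since it is obtained by linearizing around $v=0$).

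For the first statement, I would argue by contradiction, assuming there exist three distinct intersection points $(\b_i,\a_i)$, $i\in\{1,2,3\}$. The monotonicity in $\b$ of each branch of $\Si_D(c)$ and $\Si_d(c)$ established in Section \ref{sec:waves} forces the $\b_i$ and the $\a_i$ to be pairwise distinct, and I label them so that $0<\a_1<\a_2<\a_3$. Each $(\b_i,\a_i)$ produces a plane wave solution $(u_i,v_i)=e^{\a_i(x+ct)}(1,\g_i\phi(\b_i,y))$ of \eqref{41}, and by linearity the pair $(\tilde u,\tilde v):=(u_2-u_3,v_2-v_3)$ is another solution.

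The key observation is the asymptotic behavior in $x$: since $\a_1<\a_2<\a_3$, the strictly positive solution $(u_1,v_1)$ dominates $(\tilde u,\tilde v)$ as $x\to-\infty$, while $(\tilde u,\tilde v)$ becomes negative and large in absolute value as $x\to+\infty$, being dominated there by $-e^{\a_3(x+ct)}$. These asymptotics allow me to define
$$k^*:=\min\{k\geq 0:k(u_1,v_1)(x,y,0)\geq(\tilde u,\tilde v)(x,y,0)\text{ in }\ov\O\}$$
and to guarantee that $k^*>0$ and that the inequality is saturated at some point $(x_0,y_0)\in\ov\O$. Applying the strong comparison principle of Proposition \ref{Pr21} to the nonnegative solution $k^*(u_1,v_1)-(\tilde u,\tilde v)$ of \eqref{41} then forces the identity $k^*(u_1,v_1)\equiv(\tilde u,\tilde v)$, which contradicts the fact that $u_1$ is positive everywhere while $\tilde u$ changes sign in $x$.

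For the second statement I would again argue by contradiction, assuming that at $c=c^*$ there are two distinct tangency points. The curves depend continuously on $c$, and the monotonicity of $\a_D^{\pm}(c,\cdot)$ and $\a_d^{\pm}(c,\cdot)$ in $c$, together with the fact that for $c=c^*$ the curves are tangent (not transverse), imply that each tangency splits into two transverse intersections for $c$ slightly larger than $c^*$. This would produce at least four intersections for such $c$, contradicting the first part of the proposition. The main obstacle I anticipate is the precise bookkeeping of the $x$-asymptotics needed to ensure that $k^*$ is both positive and realized at an interior point of $\ov\O$ (rather than only in a limit at $|x|\to\infty$): once this quantitative control is in place, the strong comparison principle delivers the contradiction cleanly.
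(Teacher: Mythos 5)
Your proposal is correct and follows essentially the same route as the paper: three intersections yield three ordered exponents, the difference $(u_2-u_3,v_2-v_3)$ is compared against a multiple of the slowest-decaying plane wave via a sliding argument and the strong comparison principle of Proposition \ref{Pr21}, and the uniqueness of the tangency point follows by splitting each tangency into two transverse intersections for $c>c^*$. The $x$-asymptotics you flag as the main obstacle are handled in the paper exactly as you anticipate, by choosing $L$ so that $(u_1,v_1)>(\tilde u,\tilde v)$ for $x\leq -L$ and $(\tilde u,\tilde v)<(0,0)$ for $x\geq L$, which guarantees that the contact point for $k^*$ is attained at a finite $(x_0,y_0)$.
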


%

According to the position of the first intersection of the curves we give the 
following
\begin{definition}
\label{Def41}
Denoting by $(\b^*,\a^*)$ the 
tangency point 
between the curves $\Si_{D}(c^*)$ and $\Si_{d}(c^*)$, which is unique by Proposition \ref{Pr41}. We 
say that
\begin{equation*}
c^* \text{ is of } 
\left\{
\begin{array}{lll}
\text{\emph{type} $1$} & \text{if }\b^*>0; & \text{we write $c^*=c^*_1$
\quad (cf.~Figure \ref{Fig41}(B))} \\
\text{\emph{type} $2$} & \text{if }\b^*<0; & \text{we write $c^*=c^*_2$ 
\quad (cf.~Figure \ref{Fig41}(E))} \\
\text{\emph{mixed type}} & \text{if }\b^*=0; & \text{we write $c^*=c^*_m$.}
\end{array}
\right.
\end{equation*}
\end{definition}
\begin{figure}[ht]
\begin{center}
\begin{tabular}{ccc}
\includegraphics[scale=0.53]{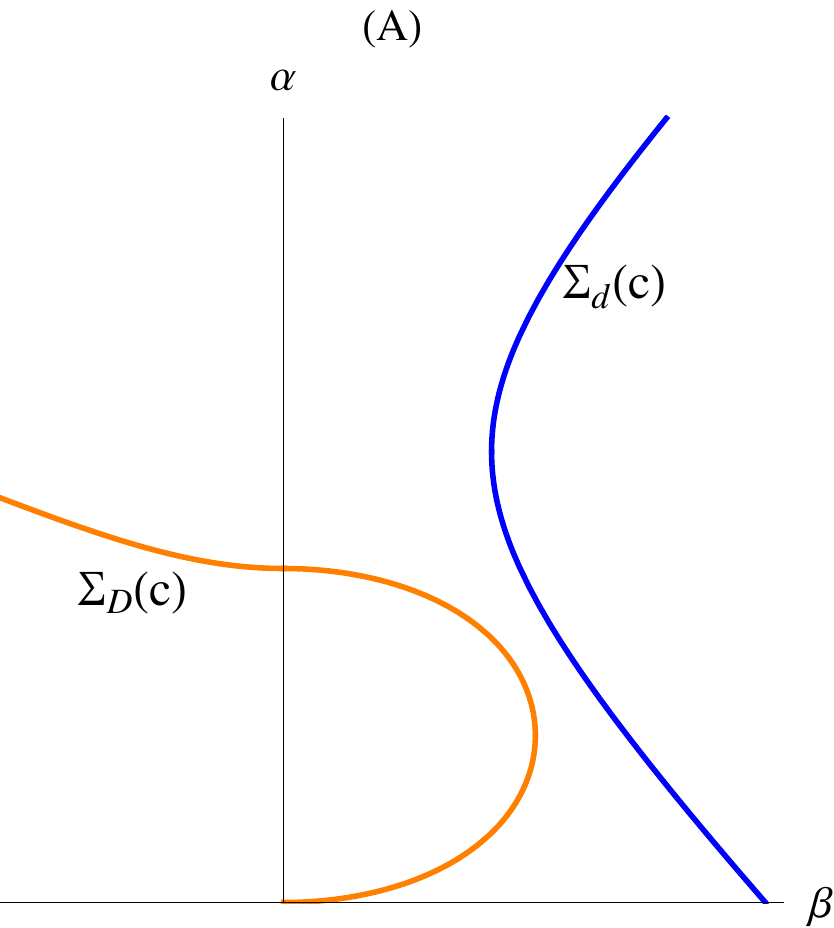} & \hspace{0.01cm}
\includegraphics[scale=0.53]{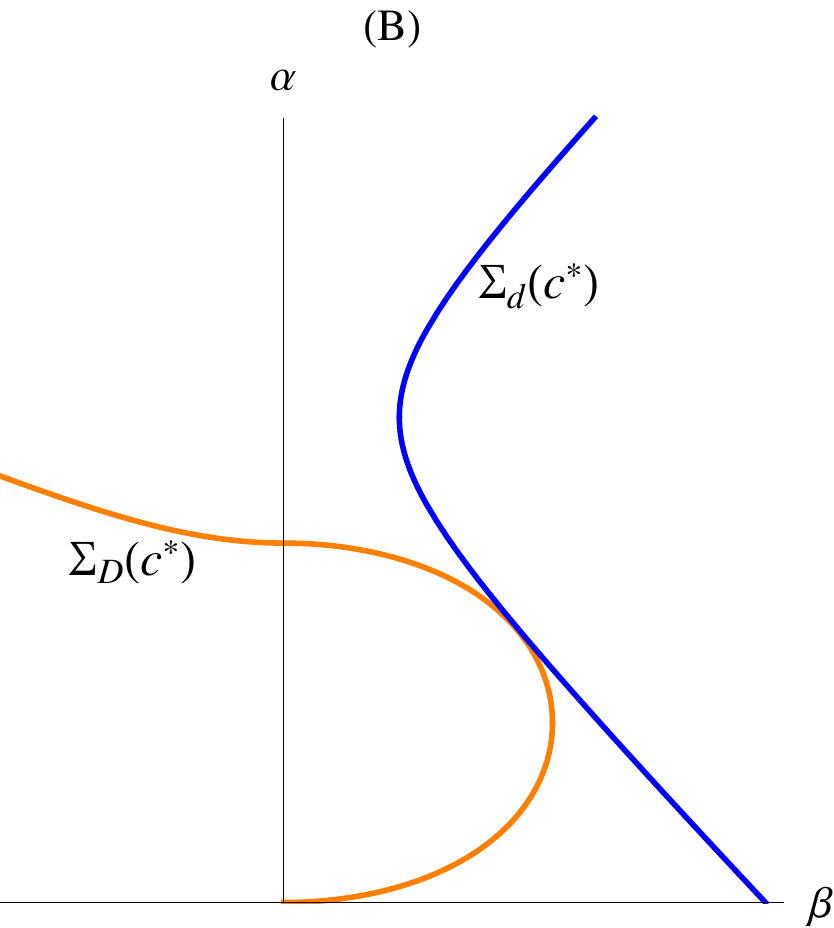} & \hspace{0.01cm} 
\includegraphics[scale=0.53]{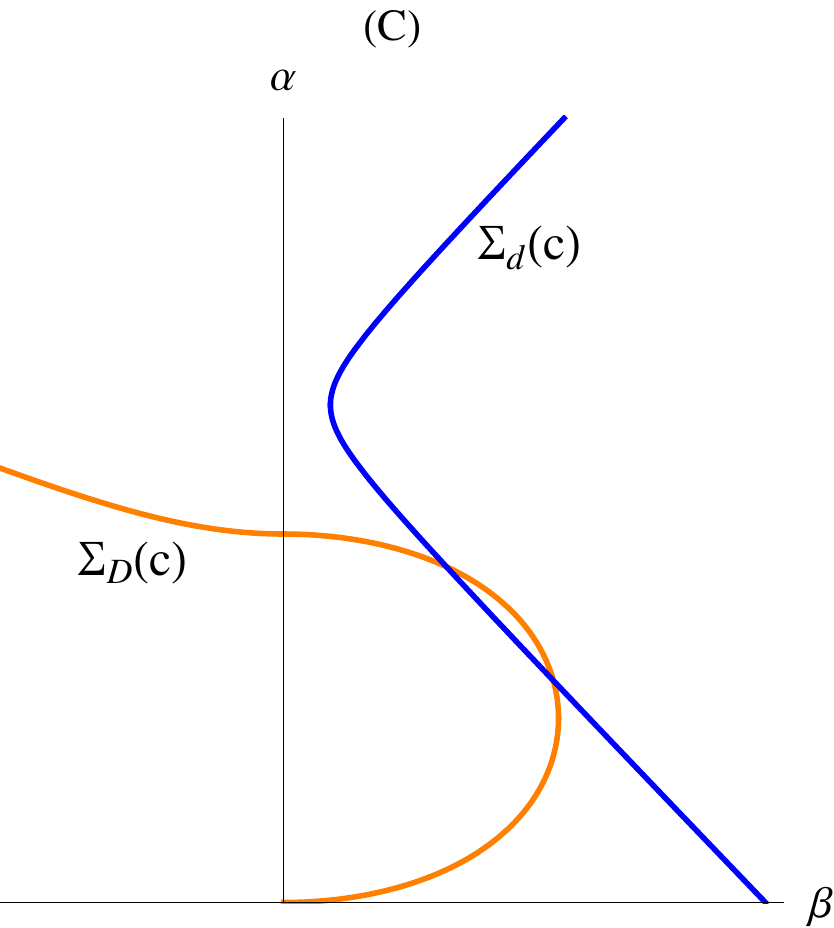} \\
\includegraphics[scale=0.53]{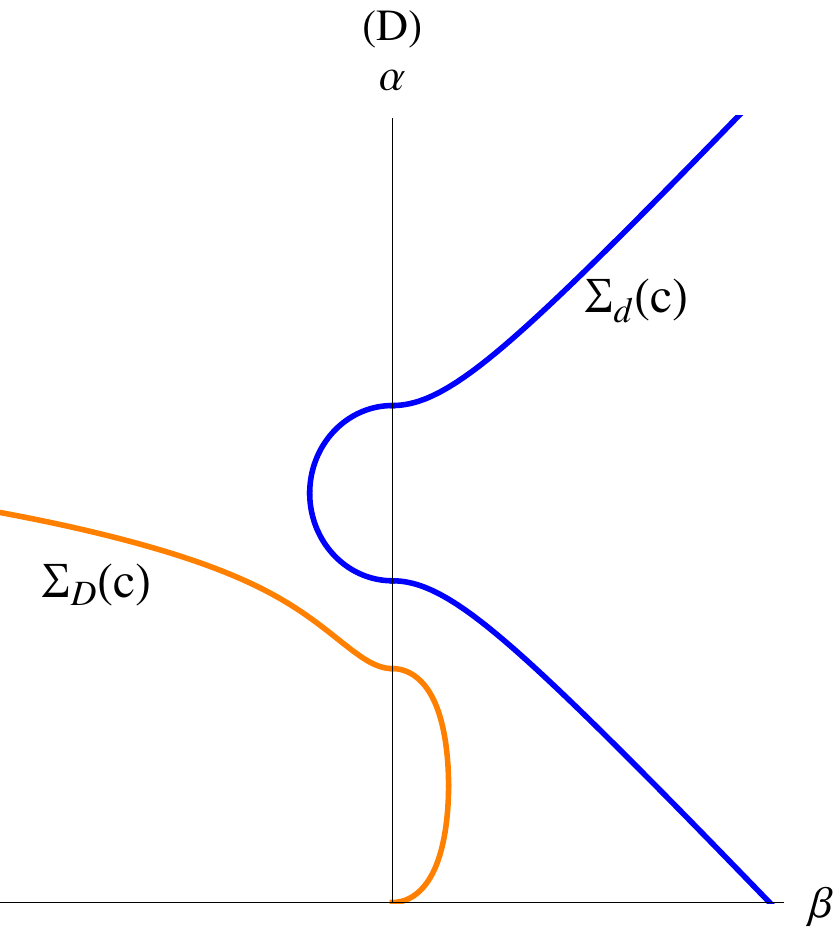} & \hspace{0.01cm}
\includegraphics[scale=0.53]{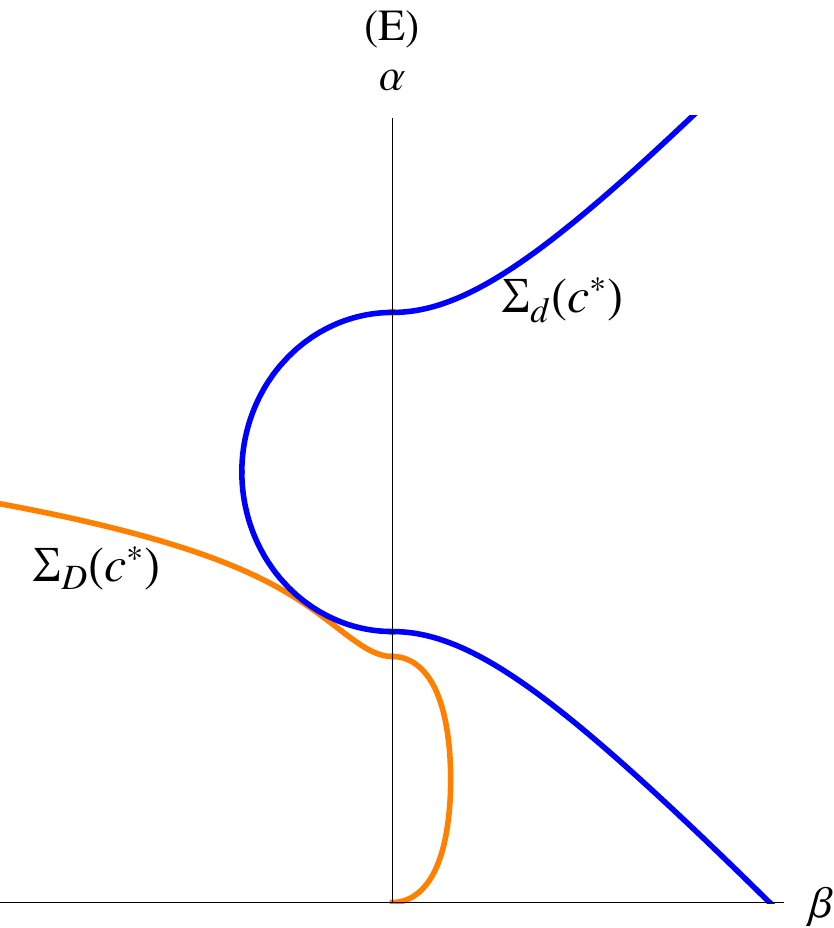} & \hspace{0.01cm}
\includegraphics[scale=0.53]{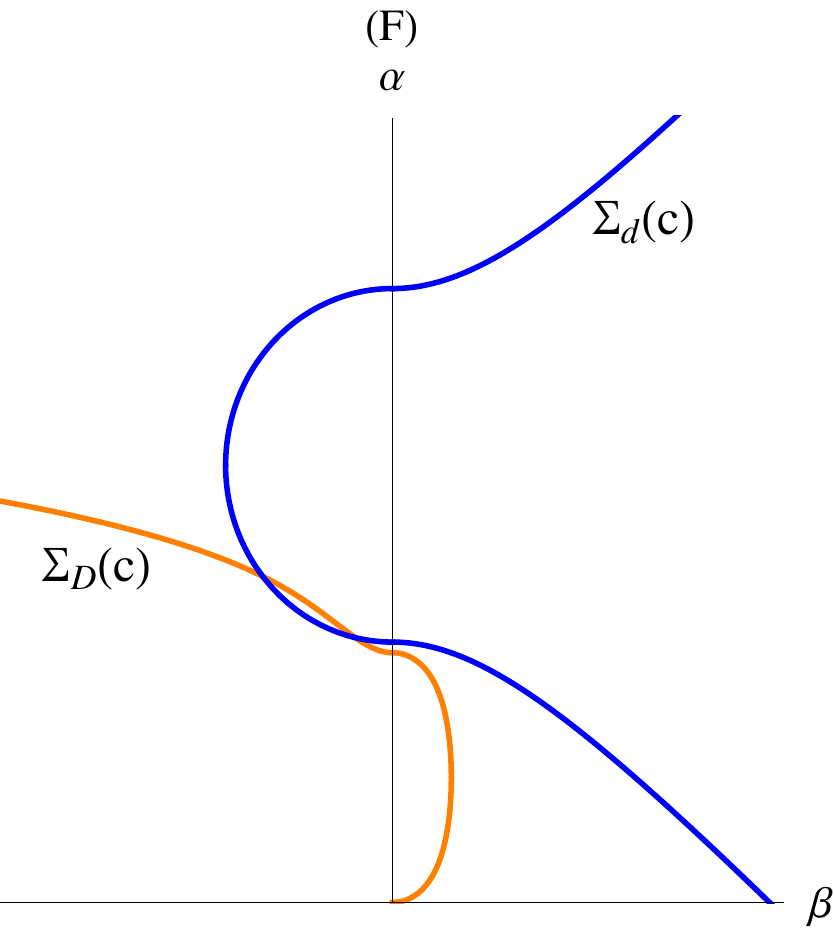} 
\end{tabular}
\caption{\slshape\small{Relative position of $\Si_{D}(c)$ and $\Si_{d}(c)$ as 
$c$ 
increases (from left to right): 
$c^*=c^*_1$ (first row) and $c^*=c^*_2$ 
(second row).}} \label{Fig41}
\end{center}
\end{figure}

As an important consequence of Proposition \ref{Pr41}, we can characterize the type of $c^*$ according to the different parameters of the problem. This will be used in Sections \ref{section5} and \ref{section6} to study some important properties of the asymptotic speed of propagation.

We start with the case $D\leq 2d$ and $c=c_{\KPP}$, for which the first relation of \eqref{prop1} and \eqref{prop2} gives
\begin{equation}
\label{D<2d}
\a_{D}^+(\cKPP,0)=\frac{\cKPP}{D}\geq\frac{\cKPP}{2d}=\a_{d}^{\pm}(\cKPP,0).
\end{equation}
This entails that $c^*<c_{\KPP}$ if $D<2d$ and, therefore, it is of type 1, 
since  $\Si_d(c^*)$ is defined for $\b\geq\hat{\b}(c^*)>0$. The same conclusions 
hold true in the case $D=2d$, where the curve $\Si_D(c_{\KPP})$ and the 
degenerate hyperbola $\Si_d(c_{\KPP})$ are secant, because 
$\a_{D}^+(\cKPP,0)=\a_{d}^{\pm}(\cKPP,0)$ and 
$\partial_\beta\a_{D}^+(\cKPP,0)=0$, while 
$\partial_\beta\a_{d}^-(\cKPP,\beta)$ is a negative constant for $\beta>0$.


For $D>2d$, instead, if we set
\begin{equation}
\label{cint}
c_M=\frac{D}{\sqrt{4d(D-d)}}c_{\KPP},
\end{equation}
thanks to the first relations of \eqref{prop1} and \eqref{prop2}, we have that
\begin{equation}
\label{iff}
\Si_D(c)\cap\Si_d(c)\cap\{\b=0\}\neq\emptyset \qquad \text{ if and only if } \qquad c=c_M.
\end{equation}
We will now show that the value of the second derivatives with respect to $\b$ of the left and right branches of $\Si_D$ and $\Si_d$, 
evaluated at $(c_M,0)$, characterizes the type of $c^*$. Namely, if 
\begin{equation*}
\frac{-2d\mu R}{N\nu c_M}=\lim_{\b\da 0}\frac{\p^2}{\p\b^2}\a_{D}^+(c_M,\b)>\lim_{\b\da 0}\frac{\p^2}{\p\b^2}\a_{d}^-(c_M,\b)=\frac{-2d}{\sqrt{c_M^2-\cKPP^2}},
\end{equation*}
then $\Si_D(c_M)$ and $\Si_d(c_M)$ intersect, apart from $\b=0$, for some $\b>0$. Proposition \ref{Pr41} now entails that the curves do not intersect for $\{\b\leq 0\}$ and the monotonicity in $c$ of the curves implies that $c^*$ is of type 1. Thanks to \eqref{cint}, the above relation 
reads as
\begin{equation*}
D(N\nu-\mu R)>-2d\mu R.
\end{equation*}
Similarly, if
\begin{equation*}
\frac{2d\mu R}{N\nu c_M}=\lim_{\b\ua 0}\frac{\p^2}{\p\b^2}\a_{D}^+(c_M,\b)>\lim_{\b\ua 0}\frac{\p^2}{\p\b^2}\a_{d}^-(c_M,\b)=\frac{2d}{\sqrt{c_M^2-\cKPP^2}},
\end{equation*}
which can be equivalently written as
\begin{equation*}
D(\mu R-N\nu)>2d\mu R,
\end{equation*}
then $\Si_D(c_M)$ and $\Si_d(c_M)$ intersect for $\b=0$ and for some $\b<0$ and $c^*$ is of type 2. 

From this discussion, we have
\begin{equation}
\label{cases}
c^*=\begin{cases}
c^*_1 & \text{ if }\ \displaystyle\frac{2d}D>1-\frac{N\nu}{\mu R} \\
\\
c^*_2 & \text{ if }\ \displaystyle\frac{2d}D<1-\frac{N\nu}{\mu R}
\end{cases}
\end{equation}
and that, when $\b^*=0$, necessarily  $c^*=c^*_m=c_M$ and
\begin{equation}
\label{derivataseconda}
\lim_{\b\da 0}\frac{\p^2}{\p\b^2}\a_{D}^+(c_M,\b)\!=\!\lim_{\b\da 0}\frac{\p^2}{\p\b^2}\a_{d}^-(c_M,\b)\!=\!-\lim_{\b\ua 0}\frac{\p^2}{\p\b^2}\a_{D}^+(c_M,\b)\!=\!-\lim_{\b\ua 0}\frac{\p^2}{\p\b^2}\a_{d}^-(c_M,\b).
\end{equation}
In the case $D\leq 2d$, we have seen in \eqref{D<2d} that $c^*=c^*_1$,  
and therefore \eqref{cases} is valid for every choice of the parameters.

\vspace{0.3cm}

With the plane wave solutions constructed in this section we are able 
to give the
\begin{proof}[Proof of Theorem \ref{Th11}\eqref{Th11i}]
Let $c^*$ be the quantity constructed above and $(\ov u, \ov v)$ the 
corresponding positive solution of the form \eqref{42} to the linearized problem 
\eqref{41}. By the Fisher-KPP hypothesis \eqref{I2}, for $k>0$,
$k(\ov{u},\ov{v})$ is a supersolution to 
\eqref{I1}. Moreover, since the initial datum has bounded support, $k$ can be 
chosen so that
\begin{equation*}
(u_0(x,y),v_0(x,y))<k(\ov{u}(x,y,0),\ov{v}(x,y,0))
\end{equation*}
and, since $(0,0)$ is a subsolution of \eqref{I1}, we have from the comparison 
principle of Proposition \ref{Pr21}, that
$$(0,0)<(u(x,y,t),v(x,y,t))<k(\ov{u}(x,y,t),\ov{v}(x,y,t)), \quad \text{ for all $t>0$}.$$
Pick $c>c^*$. Since 
the $\a$ in \eqref{42} is positive, we have that
\begin{equation*}
0<\sup_{\substack{-x\geq 
ct\\|y|=R}}u(x,y,t)<\sup_{-x\geq 
ct}ke^{\a(x+c^*t)}\leq ke^{\a(c^*-c)t}\longrightarrow 0 
\qquad \text{ as } t\to +\infty.
\end{equation*}
The limit for $v$ follows analogously. By the symmetry of the system, the same 
limits hold true in the case $x\geq ct$.
\end{proof}

\subsection{Generalized subsolutions and proof of \eqref{asp2}}
\label{sec:sub}
We construct now some generalized, in the
sense of Proposition \ref{Pr24}, subsolutions to \eqref{I1} which are
compactly supported and stationary in the moving frame 
with velocity $-c$ along the $x$-direction, that is, for system \eqref{pardrift}. 
To do so, we will make use of the 
construction of $c^*$ carried out in Section \ref{sec:waves} and of a variant of 
Rouch\'e's theorem from complex analysis, 
whose proof is given in Appendix~\ref{app}.

\begin{proposition}
\label{Pr44}
Let $c^*$ be the quantity constructed in Section \ref{sec:waves}. Then, for 
every $\underline c<c^*$, there exist $c\in(\un c, c^*)$ and two functions 
$\un{u}\in C(\partial\O)$, $\un{v}\in C(\overline\O)$  which are
nonnegative, compactly supported, invariant by rotations in $y$ and such that 
$\e(\un{u},\un{v})\not\equiv(0,0)$ is a generalized stationary
subsolution of \eqref{pardrift} for all $\e\in(0,1]$.

\begin{proof}
Consider the linearization around $v=0$ of the stationary version of 
\eqref{pardrift}, i.e. 
\begin{equation}
\label{linearizzato}
  \left\{ \begin{array}{ll}
  \!\!\!-d\D v(x,y)+c v_x(x,y)=f'(0)v(x,y) & \text{for } (x,y)\in\O  \\
  \!\!\!-D\D_{\p\O}u(x,y)+c u_x(x,y)\!=\!\nu v(x,y)\!-\!\m u(x,y) & \text{for } (x,y)\in\p\O \\ 
  \!\!\!d\p_n v(x,y)=\m u(x,y)-\nu v(x,y) & \text{for } (x,y)\in\p\O.
  \end{array} \right.
\end{equation}
Suppose first that $c^*$ is of either type 1 or 2, that is, $c^*=c^*_j$ with 
$j\in\{1,2\}$ (cf.~Definition \ref{Def41}). The study performed 
in Section \ref{sec:waves} concerns the existence of solutions to \eqref{linearizzato}
of the form
\begin{equation}
\label{421}
(u,v)=e^{\a x} (1,\g \phi_j(\b y)),
\end{equation}
with $\a,\g>0$, $\b\in\R$ and $\phi_j$ defined there.
Observe 
preliminarily that the function $\psi_j$ appearing in the 
definition of $\Si_D$  in \eqref{45} or \eqref{413} is analytic and that the 
radius 
of convergence of its Maclaurin series is $\infty$.
This is a consequence of \eqref{411} for both $\psi_2$ and 
$\psi_1$, because it can be easily seen, following 
analogous arguments as in Section \ref{sec:waves}, that 
$\psi_1(r)=\leftidx{_0}{F}{_1}(;\tau+1;-r^2/4)$. As a consequence, we 
can use the same power series to define $\phi_j(\beta 
y):=\psi_j(\beta|y|)$ for complex $\b$ too, and the search for plane waves in the form 
\eqref{42}, but now with $\a,\b,\g\in\C$, works exactly as in
Section \ref{sec:waves}, and leads to the systems \eqref{43} or \eqref{412} even 
in this case. Solving such systems amounts to finding intersections between
the curves $\Si_{D}(c)$ and $\Si_{d}(c)$ from \eqref{curves}, with now
$\a^\pm_{d}$ and $\a^{\pm}_{D}$ defined for $\beta\in\C$.
By the definition of $c^*$ given in Section \ref{sec:waves}, $\Si_{D}(c)$ and
$\Si_{d}(c)$ have real intersections if $c>c^*$, 
a real tangency point $(\b^*,\a^*)$ for $c=c^*$ satisfying $\b^*\neq 0$, and no real intersections if $c<c^*$.
Complex intersections are sought for $c<c^*$, $c\sim c^*$.
If the tangency is not vertical, 
then the function 
$$h(\xi,\tau):=\a_{d}^{k}(c^*+\xi,\b^*+\tau)-\a^{l}_{D}(c^*+\xi,\b^*+\tau),$$ (with 
$k,l\in\{+,-\}$ that have to be chosen accordingly to which branches are 
tangent for $c=c^*$) when restricted to $\R^2$, is analytic in a neighborhood of $(0,0)$ and satisfies the hypothesis of
Theorem \ref{Thapp}. Indeed, the strict monotonicity in $c$ of $\Si_D(c)$ and $\Si_d(c)$ yields that 
$\frac{\partial h}{\partial\xi}(0,0)$ has a sign; moreover, since $h(0,\cdot)$ does not change sign, the first nonzero derivative of $h$ with respect to 
$\tau$ at $(0,0)$ has even order and opposite sign to $\frac{\partial h}{\partial\xi}(0,0)$.
If the tangent at $(\b^*,\a^*)$ is vertical, which happens for $D=d$,  we can parameterize, 
thanks to the implicit function theorem, the curves $\Si_D(c)$ and $\Si_d(c)$ using $\a$ instead of $\beta$, 
obtaining two analytic functions whose difference $h$ satisfies the hypothesis of Theorem \ref{Thapp}.

Consider now the case $c^*=c^*_m$, i.e. when $\b^*=0$. Observe that, thanks to \eqref{iff}, we have $c^*=c_M$. In this case, $\Si_D$ and 
$\Si_d$ are constructed by attaching two different analytic curves at the tangency point
and therefore the above defined function $h$ is no longer analytic around $(0,0)$.
 We overcome this difficulty by considering just the curves $\a^+_{D}(c,\b)$ and 
$\a^-_{d}(c,\b)$ for $c\sim c^*$ and, say, $\b\geq0$, extending them analytically for $\b<0$ (which amounts to taking 
their even prolongation) and then defining $h$ as their difference. Observe that, in this case, Theorem \ref{Thapp} can be applied with $n\geq 2$, since \eqref{derivataseconda} implies that $\frac{\p^2}{\p\tau^2}h(0,0)=0$.

Summing up, in all the cases, we obtain a solution of $h(\xi,\tau)=0$ 
for any $\xi<0$, $\xi\sim0$, and $\tau=\tau(\xi)\in\C$.
Namely, for $c<c^*$ close enough to $c^*$, we have a
solution $\a,\b,\g\in\C$ of \eqref{43} or \eqref{412},
and moreover, by \eqref{properties} (and also \eqref{properties2} in the case $\b^*=0$), $\b$ (respectively $\a$ when $D=d$) has nonzero real and imaginary parts. Using the first equation of the systems,  we then get that $\impart\a\neq 0$ also if $D\neq d$.

In the end, we have a complex 
solution $(u,v)$ of  \eqref{linearizzato} in the form \eqref{421}.
By  
linearity we conclude that the pair
\begin{equation*}
(u_1,v_1):=\real\left[(u,v)\right]
\end{equation*}
is a real solution of \eqref{linearizzato}.

Of course, the above construction can still be achieved if $f'(0)$ is replaced by $f'(0)-\d$,
with $0<\d<f'(0)$. This penalization only affects the quantity $c^*$,
which is easily seen to converge to the original one as $\delta\da0$.  
Hence, for any $\underline c<c^*$, we can construct 
$(u_1,v_1)$ solution to \eqref{linearizzato} for some $c\in(\underline c,c^*)$
and $f'(0)$ penalized by $\d>0$, $\delta\sim0$.
Let us denote
$\a_r:=\real\a$, $\a_i:=\im\a$ and $\varphi(y):=\g\phi_j(\b y)=\g\psi_j(\beta|y|)$.
Direct computation shows that
\begin{equation*}
u_1(x,y)=e^{\a_r x}\cos(\a_i x), \qquad v_1(x,y)=e^{\a_r x}|\varphi(y)|\cos(\a_i x+\Arg\varphi(y))
\end{equation*}
where $\Arg$ denotes the argument of complex numbers.

It is then apparent that the regions $\{u_1>0\}$ and $\{v_1>0\}\cap\partial\Omega$
are just one the translation of the other. 
Moreover, fixing a connected component $\tilde E$ of $\{u_1>0\}$, there is at most 
one connected component $\tilde F$ of $\{v_1>0\}$ intersecting $\tilde E$; if there is none,
we take as $\tilde F$ any component of $\{v_1>0\}$. Then, calling 
$E:=\tilde E\times[0,+\infty)$ and $F:=\tilde F\times[0,+\infty)$,
for all $\tilde\varepsilon>0$, the pair $(\un{u},\un{v})$
defined by
\begin{equation*}
(\un{u}(x,y,t),\un{v}(x,y,t))\!:=\!\begin{cases}
\tilde\e(u_1(x,y),v_1(x,y)) & \!\!\text{in $ E\times F$} \\
(0,0) & \!\!\text{otherwise,}
\end{cases}
\end{equation*}
 satisfies condition \eqref{condcp2}. 
Moreover, if $\tilde\varepsilon$ is small enough, 
$f(\tilde\varepsilon v_1)\geq (f'(0)-\d)\tilde\varepsilon v_1$, and therefore
$(\un{u},\un{v})$ fulfills all the requirements of the proposition.
\end{proof}
\end{proposition}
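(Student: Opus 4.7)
The plan is to lift the plane-wave construction of Section \ref{sec:waves} from real to complex parameters and extract from it real, $x$-oscillating solutions of the linearized stationary system \eqref{linearizzato} whose positive parts furnish the compactly supported subsolutions required. The functions $\psi_1, \psi_2$ appearing in the characteristic systems \eqref{43} and \eqref{412} are entire, being given by $_0F_1$-series with infinite radius of convergence, so the curves $\Si_D(c), \Si_d(c)$ admit analytic continuations to $\mathbb{C}$-valued branches $\alpha_D^\pm(c,\beta), \alpha_d^\pm(c,\beta)$. Real intersections of these branches correspond to real plane waves and exist for $c\ge c^*$; I seek \emph{non-real} intersections for $c$ slightly below $c^*$, since these correspond to complex plane waves whose real parts will be oscillating real solutions of \eqref{linearizzato}.

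Near a nonvertical real tangency $(\beta^*,\alpha^*)$ with $\beta^*\neq 0$, I would define the analytic function $h(\xi,\tau):=\alpha_d^{k}(c^*+\xi,\beta^*+\tau)-\alpha_D^{l}(c^*+\xi,\beta^*+\tau)$, the branches $k,l\in\{+,-\}$ being chosen according to which branches touch at $c^*$. The strict monotonicity in $c$ of $\Si_D,\Si_d$ gives $\partial_\xi h(0,0)\neq 0$, and the fact that $h(0,\cdot)$ does not change sign at $\tau=0$ forces its first nonvanishing $\tau$-derivative at $0$ to be of even order and of opposite sign. These are exactly the hypotheses of the Rouché-type Theorem \ref{Thapp} in the appendix, which then produces a complex root $\tau(\xi)\in\mathbb{C}\setminus\mathbb{R}$ for every small $\xi<0$. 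Two special cases need separate care: when $D=d$ the tangency is vertical, which I would handle by swapping the roles of $\alpha$ and $\beta$ via the implicit function theorem and running the same argument; when $c^*=c^*_m$ the glued curves $\Si_D,\Si_d$ are only $C^1$ at the junction $\beta=0$, so $h$ built naively from them is not analytic, and I would instead replace each curve by the analytic even prolongation of its $\beta\ge 0$ branch, using identity \eqref{derivataseconda} to see that this prolongation has vanishing second $\beta$-derivative at $0$, so Theorem \ref{Thapp} still applies with $n\ge 2$. This mixed-type configuration is the main obstacle, as the gluing of the curves in that regime is exactly what necessitated the more delicate statement of the appendix result and the matching of analytic prolongations via \eqref{derivataseconda}.

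Having produced a complex triple $(\alpha,\beta,\gamma)$ solving \eqref{43} or \eqref{412} with $\im\alpha\neq 0$, taking real parts yields a real solution of \eqref{linearizzato} of the form $u_1(x,y)=e^{\alpha_r x}\cos(\alpha_i x)$, $v_1(x,y)=e^{\alpha_r x}|\varphi(y)|\cos(\alpha_i x+\Arg\varphi(y))$, with $\varphi(y):=\gamma\phi_j(\beta y)$. The positive sets $\{u_1>0\}$ and $\{v_1>0\}\cap\partial\Omega$ are disjoint unions of $x$-strips that are translates of one another and rotationally invariant in $y$. Selecting a connected component $\tilde E$ of $\{u_1>0\}$ and a component $\tilde F$ of $\{v_1>0\}$ meeting $\tilde E$ (or any component if none does), I would cut $(u_1,v_1)$ off outside $E\times F$, with $E:=\tilde E\times[0,+\infty)$, $F:=\tilde F\times[0,+\infty)$; by construction this truncation fulfills the sign condition \eqref{condcp2} of Proposition \ref{Pr24}. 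To pass from the linearized to the nonlinear problem I would run the whole argument with $f'(0)$ replaced by $f'(0)-\delta$, $\delta>0$ small, which only perturbs $c^*$ continuously so that a $c\in(\underline c,c^*)$ is still realized; picking $\tilde\varepsilon$ small enough that the KPP hypothesis \eqref{I2} gives $f(\tilde\varepsilon v_1)\ge(f'(0)-\delta)\tilde\varepsilon v_1$ pointwise on the support, and invoking the monotonicity of $s\mapsto f(s)/s$ to extend this to every rescaling by $\varepsilon\in(0,1]$, yields the desired compactly supported generalized stationary subsolution of \eqref{pardrift}.
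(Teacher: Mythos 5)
Your proposal reproduces the paper's own argument essentially step for step: the analytic continuation of the $\leftidx{_0}{F}{_1}$-based branches, the function $h$ and the Rouch\'e-type Theorem \ref{Thapp}, the separate treatments of the vertical tangency ($D=d$) and of the mixed type via even prolongation and \eqref{derivataseconda}, the real-part oscillating solution, the truncation satisfying \eqref{condcp2}, and the $\delta$-penalization of $f'(0)$. The only quibble is that the KPP hypothesis \eqref{I2} does not grant monotonicity of $s\mapsto f(s)/s$, but you do not need it: since $f\in\mc{C}^1$ with $f(0)=0$, choosing $\tilde\e$ small enough gives $f(s)\geq(f'(0)-\d)s$ on the whole range $[0,\tilde\e\max v_1]$, which already covers every $\e\in(0,1]$, exactly as in the paper.
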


We now have all the elements to give the proof of Theorem 
\ref{Th11}\eqref{Th11ii}, but, before, we recall a result 
from \cite{BRR2} that will be needed. Actually, the statement in 
\cite{BRR2} is related to the case of the half-space (i.e. problem \eqref{I4}) 
but the proof directly adapts to our case. Anyway, for the sake of completeness 
and in order to remedy some typos in \cite{BRR2}, we will provide it.

\begin{lemma}[\cite{BRR2}, Lemma 4.1]
\label{Le44}
Let $c_1\neq c_2$
be such that any nonnegative, bounded solution $(u,v)\not\equiv(0,0)$ of \eqref{I1} satisfies, for $i\in\{1,2\}$,
\begin{align*}
\lim_{t\to +\infty} u(x+c_i t,y,t)&=\frac{\nu}{\mu}, & &\!\!\!\!\!\!\!\!\!\!\!\!\!\!\!\! 
\text{locally uniformly in $(x,y)\in\p\O$,} \\
\lim_{t\to +\infty} v(x+c_i t,y,t)&=1, & &\!\!\!\!\!\!\!\!\!\!\!\!\!\!\!\! \text{locally uniformly in 
$(x,y)\in\overline\O$.}
\end{align*}
Then,
\begin{equation*}
\lim_{t\to +\infty}\sup_{\substack{c_1 t\leq x\leq c_2 t \\ \left|y\right|=R}} \left|u(x,y,
t)-\frac{\nu}{\mu}\right|=0, \qquad \lim_{t\to +\infty}\sup_{\substack{c_1 t\leq x\leq c_2 t \\ \left|y\right|\leq R}} \left|v(x,y, t)-1\right|=0. 
\end{equation*}

\begin{proof}
Let $(u,v)$ be as in the statement, fix $\e\in(0,\nu/\mu)$ and consider the 
solutions $(u^1,v^1)$ of \eqref{I1} starting from $(u^1_0,v^1_0):=(\sup u, \sup 
v)$ and $(u^2,v^2)$ starting from $(u^2_0,v^2_0)$, with $0\leq u^2_0\leq 
\nu/\mu-\e$, $u^2_0\neq 0$, $\supp u^2_0\subset[-1,1]\times\{|y|=R\}$ and 
$v^2_0\equiv0$.
By hypothesis, there exists $T>0$ such that, for $i\in\{1,2\}$, 
\begin{equation}
\label{eq:le1}
\sup_{\substack{\left|y\right|=R \\ t>T}}
\left|u^i(c_1 t,y,t)-\frac{\nu}{\mu}\right|<\frac{\e}{2}, \qquad 
\sup_{\substack{\left|y\right|\leq R \\ t>T}}\left|v^i(c_1 t,y,t)-1\right|<\frac{\e}{2}.
\end{equation}
For the same reason, calling $k:=\max(1,|c_2 -c_1|T)$, there exists $T'>0$ such that
\begin{equation}
\label{eq:le2}
\sup_{\substack{|x|\leq k,\ \left|y\right|=R \\ t>T'}}
\left|u(x+c_2 t,y,t)-\frac{\nu}{\mu}\right|<\e, \qquad 
\sup_{\substack{|x|\leq k,\ \left|y\right|\leq R \\ t>T'}}
\left|v(x+c_2 t,y,t)-1\right|<\e.
\end{equation}
Fix $\l\in[1/2,1]$ and consider $c=(1-\l)c_1 +\l c_2$ and $t>2T'$.

If $(1-\l)t\leq T$, then applying \eqref{eq:le2} with $x=(c-c_2)t\in[-k,k]$  yields
\begin{equation}
\label{eq:le3}
\sup_{\left|y\right|=R}
\left|u(ct,y,t)-\frac{\nu}{\mu}\right|<\e, \qquad 
\sup_{\left|y\right|\leq R}\left|v(ct,y,t)-1\right|<\e.
\end{equation}
If instead $(1-\l)t>T$, we have, by construction and \eqref{eq:le2} again,
\begin{gather*}
u^2_0(x,y)\leq\left(\frac{\nu}{\mu}-\e\right)\mathbbm{1}_{[-1,1]}(x)\leq 
u(x+c_2\l t,y,\l t)\leq u^1_0(x,y), \quad \text{ for all $(x,y)\in\partial\Omega,$} \\
v^2_0(x,y)\leq v(x+c_2\l t,\l t)\leq v^1_0(x,y),  \quad \text{ for all $(x,y)\in\overline\Omega.$}
\end{gather*}
By the comparison principle, considering the evolution by \eqref{I1} of the above data after time
$(1-\l)t$ and at $x=c_1(1-\l)t$, we derive 
$$
u^2(c_1(1-\l)t,y,(1-\l)t)\leq u(c t,y,t)\leq u^1(c_1(1-\l)t,y,(1-\l)t),$$
$$v^2(c_1(1-\l)t,y,(1-\l)t)\leq v(c t,y, t)\leq v^1(c_1(1-\l)t,y,(1-\l)t). 
$$
Property \eqref{eq:le1} then allows us to infer \eqref{eq:le3} also in this case.
Since $\lambda$ ranges in $[1/2,1]$, this shows that, as $t\to+\infty$, $(u,v)$ converges to $(\nu/\mu,1)$ uniformly in the 
section of the cylinder with $x$ between $\frac{c_1+c_2}2\,t$ and $c_2 t$. By exchanging the roles 
of $c_1$ and $c_2$, we obtain the uniform convergence in the whole section
between $c_1 t$ and $c_2 t$, as desired.
\end{proof}
\end{lemma}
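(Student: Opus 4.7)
The plan is to derive the uniform convergence on the intermediate region $c_1 t\le x\le c_2 t$ by sandwiching $(u,v)$ between two reference solutions that both inherit, by the hypothesis, the prescribed convergence along $x=c_1 t$ and $x=c_2 t$. The fact that $(u,v)$ itself has the correct behavior on the two ``edges'' of the strip will then be transported into its interior by the comparison principle of Proposition~\ref{Pr21}.

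Concretely, I would first construct an upper reference solution $(u^1,v^1)$ starting from the constants $(\sup u,\sup v)$, and a lower reference solution $(u^2,v^2)$ starting from a compactly supported bump $(u^2_0,v^2_0)$ with $0\le u^2_0\le\nu/\mu-\e$, $v^2_0\equiv 0$, and $\mathrm{supp}\,u^2_0\subset[-1,1]\times\{|y|=R\}$. By the hypothesis applied to each of these solutions, for any $\e>0$ there exists $T>0$ such that, along the moving frame $x=c_1 t$, both $(u^1,v^1)$ and $(u^2,v^2)$ lie within $\e/2$ of $(\nu/\mu,1)$ in the appropriate supremum sense, for all $t>T$. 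Applying the hypothesis to $(u,v)$ itself along $x=c_2 t$ also furnishes a time $T'$ after which $(u,v)$ is within $\e$ of $(\nu/\mu,1)$ on any fixed bounded $x$-neighborhood of $c_2 t$.

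Next, for any $c\in(c_1,c_2)$ I would write $c=(1-\l)c_1+\l c_2$ with $\l\in[0,1]$, and treat the case $\l\in[1/2,1]$, the complementary range following by the symmetric argument obtained after interchanging $c_1$ and $c_2$. The key dichotomy is on the size of $(1-\l)t$: if $(1-\l)t\le T$, then $ct$ differs from $c_2 t$ by the bounded amount $|c_2-c_1|(1-\l)t\le|c_2-c_1|T$, so the local uniform convergence of $(u,v)$ along $c_2$ applies directly and gives the estimate at $x=ct$. If instead $(1-\l)t>T$, then at the intermediate time $\l t$ and at positions within distance $1$ of $c_2\l t$ the solution $(u,v)$ is already squeezed between $(u^2_0,v^2_0)$ and $(u^1_0,v^1_0)$; applying the comparison principle to the shifted solutions evolved for the remaining time $(1-\l)t$ and evaluated at $x=c_1(1-\l)t$, so that the cumulative shift $c_2\l t+c_1(1-\l)t$ lands exactly on $ct$, I sandwich $(u,v)(ct,y,t)$ between $(u^2,v^2)\bigl(c_1(1-\l)t,y,(1-\l)t\bigr)$ and $(u^1,v^1)\bigl(c_1(1-\l)t,y,(1-\l)t\bigr)$, both of which are within $\e/2$ of $(\nu/\mu,1)$ by the choice of $T$.

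The main obstacle is ensuring that the resulting estimates are uniform in $\l\in[0,1]$, equivalently in $c\in[c_1,c_2]$, rather than merely pointwise in $c$. This is what the two-case split precisely accomplishes: the first case is uniform because the translation $|ct-c_2 t|$ is controlled by the $\l$-independent constant $|c_2-c_1|T$, so a single local convergence statement along $c_2$ covers all values of $\l$ in that range; and the second case is uniform because $T$ is fixed, so the elapsed time $(1-\l)t$ exceeds $T$ automatically whenever $t$ is large, independently of $\l$. Combining the two cases and exchanging the roles of $c_1$ and $c_2$ to cover $\l\in[0,1/2]$ then yields the claimed uniform limits on the entire strip $c_1 t\le x\le c_2 t$.
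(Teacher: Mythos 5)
Your proposal is correct and follows essentially the same route as the paper's proof: the same pair of reference solutions $(u^1,v^1)$, $(u^2,v^2)$, the same dichotomy on whether $(1-\l)t$ exceeds the fixed time $T$, the same comparison-principle sandwich at the intermediate time $\l t$ with cumulative shift $c_2\l t+c_1(1-\l)t=ct$, and the same exchange of $c_1$ and $c_2$ to cover $\l\in[0,1/2]$. The only slight imprecision is your remark that $(1-\l)t>T$ holds ``automatically'' for large $t$ independently of $\l$ (it does not, e.g.\ for $\l$ near $1$), but your case split already handles exactly that situation, so the argument stands.
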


\begin{proof}[Proof of \eqref{asp2}] 
We are going to prove that, for all $\underline c\in(0,c^*)$, there exists $c\in(\un c, c^*)$
such that any solution $(u,v)$ to \eqref{I1} with bounded nonnegative initial datum 
$(u_0,v_0)\not\equiv(0,0)$  satisfies
\begin{equation}
\label{422}
\begin{split}
\lim_{t\to+\infty}u(x\pm ct,y,t)&=\frac{\nu}{\mu}, \qquad \text{locally uniformly in $(x,y)\in\p\O$,} \\
\lim_{t\to+\infty}v(x\pm ct,y,t)&=1, \qquad \text{ locally uniformly in $(x,y)\in\overline\O$.}
\end{split}
\end{equation}
Then, \eqref{asp2} with $c$ arbitrarily close to $c^*$ will follow from Lemma \ref{Le44} applied with $c_1=- c=-c_2$. 
Of course, by the symmetry of the problem, it is sufficient to prove \eqref{422}
in the case of propagation towards left,
i.e., $x-ct$. To do this, we consider the pair
\begin{equation*}
(\tilde u(x,y,t),\tilde v(x,y,t)):=(u(x-ct,y,t),v(x-ct,y,t)).
\end{equation*}
This is a solution to \eqref{pardrift} with initial datum $(u_0,v_0)$.
From the comparison principle of Proposition \ref{Pr21} we have that at, say, 
$t=1$, $(\tilde u, \tilde v)>(0,0)$. 
By Proposition~\ref{Pr44}, there exists $c\in(\un c, c^*)$ and a generalized stationary subsolution 
$(\un{u},\un{v})$, in the sense of Proposition \ref{Pr24}, 
to \eqref{pardrift} 
which is rotationally invariant in $y$ and lies below $(\tilde u,\tilde v)$ at $t=1$. 
By Proposition \ref{Pr24}, this order is maintained for all later times,
and from Propositions \ref{Pr31}
and \ref{Pr32} we obtain that
\begin{equation*}
\left(\frac{\nu}{\mu},1\right)\leq\liminf_{t\to+\infty}(\tilde u(x,y,t),\tilde v(x,y,t))
\end{equation*}
locally uniformly in $\ov\O$. The proof of 
\eqref{422} in the case $x-ct$ is thereby achieved thanks to 
Corollary \ref{Co33} and Remark \ref{Re35}\eqref{Re35ii}. 
\end{proof}


\setcounter{equation}{0}
\section{Limits for small and large diffusions}
\label{section5}
In this section we will study how the speed of propagation behaves as a function of $D$, 
the diffusion on the boundary of the cylinder. For this reason we will denote it by $c^*(D)$. 
The next proposition yields the first relation in \eqref{I7}, 
as well as the characterization of the limit $c_0$
as the speed of propagation for the semi-degenerate problem \eqref{51}.
Notice that the latter is formally derived from the system \eqref{I1} by letting $D\da 0$.

\begin{proposition}
\label{Pr51}
The function $D\mapsto c^*(D)$ is increasing and, as $D\da 0$, tends to the 
(positive) asymptotic speed of spreading of the semi-degenerate problem \eqref{51}.
\begin{proof}
It is easily seen from \eqref{45} and \eqref{413} that the function $D\mapsto\a_{D}^-(c,\b)$ 
is increasing if $\b>0$, while $D\mapsto\a_{D}^+$ is decreasing for every $\b\in\R$. 
Hence, since the curves $\Si_{D}(c^*(D))$ and $\Si_d(c^*(D))$ are tangent, 
they will not touch for $c=c^*(D)$ and $D'> D$. As a consequence $c^*(D')>c^*(D)$, 
which gives the desired monotonicity.

Let $D<d$ and recall from \eqref{D<2d} that $c^*(D)=c^*_1(D)<\cKPP$ in such case. 
We further have that 
$$\min_\beta\a_{D}^+(c^*_1,\beta)=\frac{c^*_1}{2D}>\frac{c^*_1}{2d}=
\max_\beta\a_{d}^-(c^*_1,\beta).$$
It follows that the tangent point between $\Si_{D}(c^*_1)$ and 
$\Si_{d}(c^*_1)$ is actually between the graphs of $\a_{D}^-(c^*_1,\cdot)$
and $\a_{d}^+(c^*_1,\cdot)$.
As
$D\da 0$, $\a_{D}^-$ tends locally uniformly in $\b\in[0,\ov{\b})$ to the function 
\begin{equation*}
\a_{D,0}^-(c,\b):=\frac{\chi_1(\beta)}c=\frac{-\mu d\b\psi_1'(\b R)}{c\bigl(d\b\psi_1'(\b R)+\nu\psi_1(\b R)\bigr)},
\end{equation*}
which is increasing in $\b$ and satisfies
\begin{equation*}
\a_{D,0}^-(c,0)=0, \qquad \lim_{\b\ua\ov{\b}} \a_{D,0}^-(c,\b)=+\infty.
\end{equation*}
Moreover, the curve $\a_{D,0}^-(c,\cdot)$ increases to $+\infty$ as $c\da 0$ and
therefore it does not intersect $\a^+_{d}(c,\cdot)$ for $c\sim 0$. On the other hand, 
the curves are secant for $c=c_{\KPP}$ and, as a consequence, they are tangent for some
value $c_0\in(0,c_{\KPP})$. This is the desired limit of 
$c^*(D)$, owing to the locally uniform convergence of $\a_{D}^-$ to $\a_{D,0}^-$.

To see that $c_0$ coincides with the asymptotic speed of spreading for
the semi-degenerate problem \eqref{51}, it is sufficient to consider the plane waves
\begin{equation*}
(\ov{u},\ov{v})=e^{\a(x+ct)}(1,\g\psi_1(\b\left|y\right|))
\end{equation*}
and repeat the arguments of Section \ref{section4} to construct supersolutions and
generalized subsolutions for the system. 
Indeed, plugging $(\ov{u},\ov{v})$ in \eqref{51} one is led to find respectively
real and complex intersections
between the curves $\a^+_{d}$ and $\a_{D,0}^-$, the limit of 
$\a_{D}^-$ as $D\da 0$ introduced above. This is precisely the way in which $c_0$ has been defined.
With the super and subsolutions in hand, in order to conclude one 
has only to ensure that the comparison principles hold true for \eqref{51} and the analogous version with an additional transport term in the $x$-direction. 
The weak comparison principle can be derived using the 
standard technique of reducing to strict sub- and supersolutions and then 
repeating the arguments of 
\cite{BRR1,BRR3}, while the strong one is given by Proposition~\ref{Pr25} and Remark \ref{Re26}. 
\end{proof}
\end{proposition}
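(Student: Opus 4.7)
The plan is to exploit the geometric characterization of $c^*(D)$ from Section~\ref{section4}: it is the smallest $c$ at which the curves $\Si_D(c)$ and $\Si_d(c)$ in the $(\b,\a)$-plane become tangent. Both the monotonicity in $D$ and the limit as $D\da 0$ reduce to tracking how $\Si_D(c)$ depends on $D$, while $\Si_d(c)$ is independent of it.

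For the monotonicity, I would implicitly differentiate the defining relation $-D\a^2+c\a=\chi_j(\b)$ (with $j=1$ or $2$ according to the sign of $\b$) with respect to $D$, obtaining $\p_D\a=\a^2/(c-2D\a)$. Since $c-2D\a_D^{\pm}=\mp\sqrt{c^2-4D\chi_j(\b)}$, the upper branch $\a_D^+$ is strictly decreasing in $D$ on its whole domain, while $\a_D^-$ is strictly increasing in $D$ at the points where $\chi_j(\b)>0$. Hence, as $D$ grows, every point of $\Si_D(c)$ is pushed towards the central value $c/(2D)$ and the outermost envelope of $\Si_D$ moves strictly away from $\Si_d$. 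In particular, the tangency at $c=c^*(D)$ is broken when $D$ is slightly increased, and since $\Si_D(c)$ expands monotonically as $c$ grows, tangency can be restored only at some $c^*(D')>c^*(D)$.

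For the limit as $D\da 0$, Taylor-expanding the square root in~\eqref{45} yields the locally uniform convergence on $\b\in[0,\ov\b)$,
\begin{equation*}
\a_D^-(c,\b)\xrightarrow[D\da 0]{}\frac{\chi_1(\b)}{c}=:\a_{D,0}^-(c,\b),
\end{equation*}
whereas $\a_D^+(c,\b)\to+\infty$. Thus in the limit $\Si_D(c)$ degenerates to the graph of $\a_{D,0}^-(c,\cdot)$. This limit curve is increasing in $\b$, vanishes at $\b=0$, blows up as $\b\ua\ov\b$, and is strictly decreasing in $c$. For $c$ sufficiently small it lies entirely above the upper branch $\a_d^+(c,\cdot)$ of $\Si_d(c)$, whereas at $c=\cKPP$ the two curves are transverse. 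By continuity in $c$, there is a first value $c_0\in(0,\cKPP)$ at which they become tangent, and the locally uniform convergence $\a_D^-\to\a_{D,0}^-$ then yields $c^*(D)\to c_0$.

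Finally, to identify $c_0$ with the asymptotic speed of spreading of~\eqref{51}, I would repeat the scheme of Section~\ref{section4} for this semi-degenerate system. Plane waves of the form $e^{\a(x+ct)}(1,\g\psi_1(\b|y|))$ for the linearization of~\eqref{51} lead to the reduced algebraic system in which the $u$-equation contributes $c\a=\chi_1(\b)$, i.e.\ $\a=\a_{D,0}^-(c,\b)$. Tangency at $c_0$ is therefore exactly the threshold for the existence of such plane wave supersolutions, giving~\eqref{asp1} for~\eqref{51}; complex perturbations at the tangency yield generalized subsolutions in the spirit of Proposition~\ref{Pr44} and hence~\eqref{asp2}. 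The required comparison principles are provided by Proposition~\ref{Pr25} and Remark~\ref{Re26}. The main obstacle I expect lies precisely in this last step: the arguments of Sections~\ref{section3}--\ref{section4} (Liouville-type classification, sliding, strong monotonicity, and the Rouch\'e-type construction of complex subsolutions) implicitly use parabolicity of \emph{both} equations, so one must check that Proposition~\ref{Pr25}, together with the coupling through the boundary condition, is a sufficient substitute for diffusion on $u$.
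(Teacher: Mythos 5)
Your proposal follows essentially the same route as the paper: the same monotonicity argument via the $D$-dependence of $\a_D^{\pm}$ and the tangency of $\Si_D(c^*(D))$ with $\Si_d(c^*(D))$, the same locally uniform limit $\a_D^-\to\chi_1(\b)/c$ identifying $c_0$ geometrically, and the same identification of $c_0$ with the spreading speed of \eqref{51} by rerunning the Section~\ref{section4} construction. The one point you flag as an obstacle is handled in the paper exactly as you suggest for the strong comparison principle (Proposition~\ref{Pr25} and Remark~\ref{Re26}), while the weak one — which your sketch leaves implicit — is obtained by the standard reduction to strict sub- and supersolutions following \cite{BRR1,BRR3}.
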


We complete this section by studying the behavior of $c^*(D)$ as $D$ goes to $+\infty$, giving the proof of the second relation in \eqref{I7}.

\begin{proposition}
\label{Pr52}
As $D$ increases to $+\infty$, $c^*(D)$ increases to $+\infty$ too, and
\begin{equation}
\label{53}
0<\lim_{D\to+\infty}\frac{c^*(D)}{\sqrt{D}}<\infty.
\end{equation}
More precisely, the limit in \eqref{53} coincides with the asymptotic speed of
spreading of the semi-degenerate problem \eqref{54}.
\begin{proof}
We will prove that both $c^*_1(D)$ and $c^*_2(D)$ increase to $+\infty$ as $D\to+\infty$ and both satisfy \eqref{53}. 
Then, the conclusion of the proposition will follow from \eqref{cases}.

The case of $c^*_1(D)$ follows as in the proof of \cite[Proposition 7.2]{T}, with little modifications.

The case of $c^*_2$ is more involved, since it requires some asymptotic expansions of the generalized hypergeometric function defining $\psi_2$; for this reason we give the details. From the monotonicities in $\b$ of $\a_{D}^+$ and $\a_{d}^-$ we have (see Figure \ref{Fig41}(E))
\begin{equation}
\label{55}
\a_{d}^-(c^*_2(D),0)<\lim_{\b\to-\infty}\a_{D}^+(c^*_2(D),\b).
\end{equation}
To calculate the limit in \eqref{55}, we recall the well known 
relation between the 
hypergeometric function $\leftidx{_0}{F}{_1}$ and the Bessel $J$ function 
(\cite[page 100]{W})
\begin{equation*}
J_\tau(z)=\frac{1}{\G(\tau+1)}\left(\frac{z}{2}\right)^\tau \leftidx{_0}{F}{_1}(;\tau+1;-\frac{z^2}{4}),
\end{equation*}
which, owing to \eqref{psi2} and \eqref{psi2'}, entails
\begin{equation*}
\frac{\psi_2(r)}{\psi_2'(r)}=
\frac{2(\tau+1)}r\,\frac{\leftidx{_0}{F}{_1}(;\tau+1;\frac{r^2}{4})}
{\leftidx{_0}{F}{_1}(;\tau+2;\frac{r^2}{4})}=
i\,\frac{J_\tau(i r)}{J_{\tau+1}(i r)}.
\end{equation*}
This, together with the following asymptotic expansion for $J_\tau(z)$ in a neighborhood of $|z|=\infty$ (see \cite[$\S$ 7.21]{W})
\begin{equation*}
J_\tau(z)=\left(\sqrt{\frac{2}{\pi z}}+o\left(\frac{1}{z}\right)\right)\cos\left(z-(2\tau+1)\frac{\pi}{4}\right)+o\left(\frac{1}{z}\right)\sin\left(z-(2\tau+1)\frac{\pi}{4}\right),
\end{equation*}
leads to
\begin{equation}
\label{510}
\lim_{r\to-\infty}\frac{\psi_2(r)}{\psi_2'(r)}=
i\lim_{r\to-\infty}\frac{\cos\left(ir-(2\tau+1)\frac{\pi}{4}\right)}
{\sin\left(ir-(2\tau+1)\frac{\pi}{4}\right)}=
-\lim_{r\to-\infty}\frac{1+e^{2r+i(2\tau+1)\frac{\pi}{2}}}
{1-e^{2r+i(2\tau+1)\frac{\pi}{2}}}=-1.
\end{equation}
As a consequence, the function $\chi_2(\beta)$ in the definition \eqref{413} of $\a_{D}^+$ 
tends to $-\mu$ as $\beta\to-\infty$, whence \eqref{55} reads as
\begin{equation}
\label{511}
\frac{1}{2d}\left(1-\sqrt{1-\frac{c^2_{\KPP}}{c^{*2}_2(D)}}\right)<\frac{1}{2D}\left(1+\sqrt{1+4\mu\frac{D}{c^{*2}_2(D)}}\right).
\end{equation}
If $c^*_2(D)$ was bounded as $D\to+\infty$, \eqref{511} would give a contradiction
(recall that $D\mapsto c^*_2(D)$ is increasing and larger than $c_{\KPP}$),
 so $c^*_2(D)$ tends to $+\infty$ and \eqref{511} can be written as
\begin{equation}
\label{512}
\left(\frac{c_{\KPP}^2}{2d}+o(1)\right)<\frac{c^{*2}_2(D)}{D}\left(1+\sqrt{1+4\mu\frac{D}{c^{*2}_2(D)}}\right).
\end{equation}
On the other hand, we have (see Figure \ref{Fig41}(E)) 
$\a_{D,2}^+(c^*_2(D),0)<\a_{d,2}^-(c^*_2(D),0)$, which gives
\begin{equation*}
\frac{1}{D}<\frac{1}{2d}\left(1-\sqrt{1-\frac{c_{\KPP}^2}{c^{*2}_2(D)}}\right),
\end{equation*}
that is,
\begin{equation*}
\frac{c^{*2}_2(D)}{D}<\frac{c_{\KPP}^2}{4d}+o(1).
\end{equation*}
Therefore, taking the limsup as $D\to+\infty$ in this 
relation and the liminf in \eqref{512}, we obtain
\begin{equation*}
0<\liminf_{D\to+\infty}\frac{c^{*2}_2(D)}{D}\leq\limsup_{D\to+\infty}\frac{c^{*2}_2(D)}{D}<\infty.
\end{equation*}
It is then natural to perform the change of variables 
\begin{equation*}
\tilde{c}=\frac{c}{\sqrt{D}}, \qquad \tilde{\a}=\a\sqrt{D}
\end{equation*}
in \eqref{412}, obtaining  
\begin{equation*}
  \left\{ \begin{array}{l}
  -d\displaystyle{\frac{\tilde{\a}^2}{D}}-d\b^2+\tilde{c}\tilde{\a}=f'(0), \\
  \displaystyle{-\tilde{\a}^2+\tilde{c}\tilde{\a}=\frac{-\mu d \b \psi_2'(\b R)}{d \b \psi_2'(\b R)+\nu\psi_2(\b R)}}.
  \end{array} \right.
\end{equation*}
For $\b<0$, the second equation describes the curve $\Si_{D}(\tilde{c})$ introduced in
\eqref{curves} with $D=1$, therefore the $\tilde{\a}$ solutions of the system 
are bounded independently of $D$. Thus, taking the limit as $D\to+\infty$ 
the system becomes
\begin{equation}
\label{514}
  \left\{ \begin{array}{l}
  -d\b^2+\tilde{c}\tilde{\a}=f'(0), \\
  \displaystyle{-\tilde{\a}^2+\tilde{c}\tilde{\a}=\frac{-\mu d \b \psi_2'(\b R)}{d \b \psi_2'(\b R)+\nu\psi_2(\b R)}},
  \end{array} \right.
\end{equation}
whose first equation describes a parabola. The first value
of $\tilde{c}$, denoted by $\tilde{c}^*_2$, for which the two curves of \eqref{514}
intersect, being tangent, provides us with the limit in~\eqref{53}.

To see that the limit in \eqref{53} coincides with
the asymptotic speed of spreading for the semi-degenerate problem \eqref{54}, one
repeats the construction of plane wave solutions of Section \ref{section4}, this time 
for \eqref{54}. This leads exactly to the system
\eqref{514} and the analogous one with $\psi_2$ replaced by $\psi_1$.  
Finally, the validity of the needed weak comparison principle follows as in 
the discussion at the end of the proof of 
Proposition \ref{Pr51}, while the strong one is again given by Proposition \ref{Pr25} and Remark \ref{Re26}.
\end{proof}
\end{proposition}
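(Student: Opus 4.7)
The plan is to exploit the geometric characterization of $c^*(D)$ as the tangency value of the curves $\Sigma_D(c)$ and $\Sigma_d(c)$ from Section \ref{sec:waves}, together with the monotonicity $D \mapsto c^*(D)$ already shown in Proposition \ref{Pr51}. By monotonicity, $c^*(D)$ admits a limit, possibly infinite, as $D \to +\infty$. To rule out a finite limit, one observes that both branches $\alpha_D^\pm(c,\beta)$ defined by \eqref{45} and \eqref{413} collapse to $0$ as $D \to +\infty$ for fixed $c$, hence the curve $\Sigma_D(c)$ degenerates to a neighborhood of the $\beta$-axis, whereas $\Sigma_d(c)$ is independent of $D$ and stays at positive distance from the axis outside a vicinity of $\beta = 0$. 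For tangency to persist for every $D$, the speed $c^*(D)$ must then diverge.

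To determine the growth rate, I perform the rescaling $\tilde c := c/\sqrt{D}$, $\tilde\alpha := \alpha\sqrt{D}$, which when substituted in \eqref{412} produces
\begin{equation*}
  \left\{ \begin{array}{l}
  -d\tilde\alpha^2/D - d\beta^2 + \tilde c\tilde\alpha = f'(0), \\
  -\tilde\alpha^2 + \tilde c\tilde\alpha = \chi_2(\beta).
  \end{array} \right.
\end{equation*}
Formally letting $D \to +\infty$ removes the $\tilde\alpha^2/D$ term and leaves a reduced system whose first equation traces a parabola in the $(\beta,\tilde\alpha)$-plane and whose second equation is exactly the $D=1$ version of the curve $\Sigma_D(\tilde c)$. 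The smallest value $\tilde c^*$ at which these two limiting curves touch tangentially is the natural candidate for $\lim_{D \to +\infty} c^*(D)/\sqrt{D}$. A parallel rescaling applies in the type $1$ case (using $\psi_1$ rather than $\psi_2$) and yields the analogous limit.

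To make this rigorous I need two-sided bounds $0 < \liminf c^*(D)/\sqrt{D} \le \limsup c^*(D)/\sqrt{D} < +\infty$. An upper bound follows from the inequality $\alpha_D^+(c^*(D),0) \le \alpha_d^-(c^*(D),0)$ that is forced by the relative position of the curves at the tangency; in the rescaled variables this translates into an upper estimate on $\tilde c^2$. A lower bound comes from a tangency inequality evaluated as $\beta \to -\infty$: computing $\lim_{\beta \to -\infty}\chi_2(\beta)$ requires the asymptotic $\psi_2(r)/\psi_2'(r) \to -1$ as $r \to -\infty$, which I would derive through the classical relation between the generalized hypergeometric function ${}_0F_1$ and the Bessel function $J_\tau$, combined with its large-argument expansion. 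Once $c^*(D)/\sqrt{D}$ is trapped in a bounded interval, continuity and strict monotonicity of the rescaled curves in $\tilde c$ allow the extraction of a subsequential limit and its identification with $\tilde c^*$.

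Finally, to match $\tilde c^*$ with the asymptotic spreading speed of the semi-degenerate problem \eqref{54}, I redo the plane-wave construction of Section \ref{section4} for \eqref{54}: the plane-wave ansatz $e^{\tilde\alpha x + \tilde c t}(1,\gamma\phi_j(\beta y))$ inserted into \eqref{54} leads precisely to the reduced system above. The super- and generalized subsolution arguments of Propositions \ref{Pr44} and \ref{Pr31} transfer with only notational changes, and the required strong comparison principle is supplied by Proposition \ref{Pr25} together with Remark \ref{Re26}. The main obstacle is the clean separation of the type $1$ versus type $2$ cases of Definition \ref{Def41}, since the asymptotic identities feeding the a priori bounds on $c^*(D)/\sqrt{D}$ involve $\psi_1$ in one case and $\psi_2$ in the other; carrying both cases simultaneously while keeping track of which branches of $\Sigma_D$ and $\Sigma_d$ realize the tangency in the limit is the delicate part of the argument.
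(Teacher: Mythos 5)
Your proposal follows essentially the same route as the paper: the same rescaling $\tilde c = c/\sqrt{D}$, $\tilde\alpha = \alpha\sqrt{D}$, the same two-sided bounds obtained from $\alpha_D^+(c^*,0)\le\alpha_d^-(c^*,0)$ and from the $\beta\to-\infty$ limit of $\chi_2$ via the ${}_0F_1$--Bessel asymptotics, and the same identification of the limit with the spreading speed of the semi-degenerate problem \eqref{54} through plane waves and Proposition \ref{Pr25} with Remark \ref{Re26}. The only cosmetic difference is that you deduce $c^*(D)\to+\infty$ from the collapse of $\Sigma_D(c)$ onto the $\beta$-axis rather than from inequality \eqref{511}, and the paper disposes of the type~$1$ case by citation to \cite{T} rather than carrying it in parallel.
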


\setcounter{equation}{0}
\section{Limits for small and large radii}
\label{section6}

This section is devoted to the proof of Theorem \ref{Th12}\eqref{Th12ii}, i.e.~to 
the study the behavior of $c^*$ as a function of $R$. For this reason, the 
dependence on $R$ will be pointed out in all the quantities introduced in the 
previous sections. 
We begin with the limit~$R\to 0$.

\begin{proof}[Proof of the first limit in \eqref{I8}.]
Observe preliminarily that, thanks to \eqref{cases}, $c^*(R)=c^*_1(R)$ for 
$R\sim 0$.
The function $\chi_1=\chi_1(\beta,R)$ appearing in the definition \eqref{45} of $\alpha_D^\pm$
converges locally uniformly to $0$ as $R\da0$. Hence, the functions $\alpha_D^\pm$ converge locally uniformly
to the constants $c/D$ and $0$ respectively. It then follows from the geometrical construction 
of $c^*_1$, see Figure \ref{Fig41}, and in particular from the fact that 
$\Sigma_d(c)$ (which does not depend on $R$) 
intersects the $\beta$-axis, that $c^*_1(R)\to 0$ as $R\da 0$.
%
%
\end{proof}

We study now the limit of $c^*(R)$ as $R\to+\infty$, and its identification with 
the  asymptotic speed of spreading $c^*_{\infty}$ in the half-space, i.e., for 
problem \eqref{I4}. 
This speed is derived in \cite{BRR1} in the case $N=1$, but the 
arguments easily adapt to 
higher dimensions, providing the same speed $c^*_{\infty}$ (independent of 
$N$) along any direction 
parallel to the hyperplane $y=0$.
For the sake of completeness, we give some details on how this can be performed.

\bigskip

\noindent
\emph{Derivation of the spreading speed for \eqref{I4}.}
Let us focus on the first component, denoting $x=(z,x')\in\R\times\R^{N-1}$.
Plane waves supersolutions can be constructed ``ignoring'' the extra $x'$ variable, i.e.,
in the form $(\ov{u},\ov{v})=(\ov{u}(z,t),\ov{v}(z,y,t))$, reducing in this way to the situation $N=1$  
considered in \cite{BRR1}. This shows that the asymptotic speed of spreading in the
$z$-direction cannot be larger than the spreading speed 
obtained in \cite{BRR1}, that we denote by $c^*_{\infty}$.

The same argument cannot be used to obtain the lower bound, because the
$x'$-independent subsolution would have an unbounded support, and this will not allow us to conclude.
To obtain a compactly supported subsolution one can proceed as follows.

In the case $D>2d$, consider plane waves of the form 
\begin{equation*}
(\ov{u},\ov{v})=e^{\a (z+ct)}\Phi(x')\left(1,\g e^{\b y}-\g_L e^{-\b y}\right),
\end{equation*}
where $\beta<0$, $L>0$ and $\Phi$ is the Dirichlet principal eigenfunction of 
$-d \D$ in the $(N-1)$-dimensional
ball of radius $L$, i.e.,
\begin{equation*}
  \left\{ \begin{array}{ll}
  -d \D\Phi=\l(L)\Phi & \text{in } B_{N-1}(0,L),   \\ 	
  \Phi=0 & \text{on } \p B_{N-1}(0,L).
  \end{array} \right.
\end{equation*}
The quantity $\g_L$ is chosen in such a way that $\ov{v}$ vanishes at 
$y=L$, that is, $\g_L=\g e^{2\b L}$, which tends to $0$ as $L\to+\infty$.
It is well known that $\l(L)\to 0$ as $L\to+\infty$. 
As a consequence, if we plug $(\ov{u},\ov{v})$ into the linearization of 
\eqref{I4} around $(0,0)$, we are reduced to finding intersections in the $(\beta,\alpha)$-plane between 
two curves approaching locally uniformly, as $L\to+\infty$, respectively
\begin{equation}
\label{521}
\a_D^{\infty}(c,\b):=\frac{1}{2D}\left(c+\sqrt{c^2-4D\chi_{\infty}(\b)}\right),
\qquad\text{with}\quad
\chi_{\infty}(\b):=\frac{\mu d\b}{\nu-d\b},
\end{equation}
and the half-circle $\R_+\ni\b\mapsto\a^{\pm}_{d}(c,\b)$, defined in Section 
\ref{sec:waves} above (see page \pageref{circle}).
These are the same curves obtained in \cite{BRR1}, which eventually lead 
to the definition of $c^*_{\infty}$.
In particular, exactly as in \cite{BRR1}, if $D>2d$, we find complex solutions $\alpha,\beta$ of the system 
for $c<c^*_{\infty}$, $c\sim c^*_{\infty}$, and $L$ large enough.
The set where the real part of the associated $(\ov{u},\ov{v})$ is positive
has connected components which are bounded in $z$, $y$, and also in $x'$ because 
$\Phi$ vanishes on $\p B_{N-1}(0,L)$. From this, one eventually gets the 
compactly supported shifting generalized subsolution.

In the case $D\leq2d$, we need to construct a compactly supported subsolution moving with any
given speed $0<c<c^*_{\infty}=\cKPP$.
This is achieved, in a standard way, by simply taking a support which does not 
intersect the boundary with fast diffusion.
Namely, call $\lambda_c(L)$ the Dirichlet principal eigenvalue of the operator 
$-d\Delta+c\partial_z$ in the ball $B_{N+1}(0,L)$, and $\Phi_c$ the associated positive 
eigenfunction.  This operator can be reduced to the self-adjoint one $-d\D+c^2/(4d)$
by multiplying the functions on which it acts by 
$e^{cz/(2d)}$. This reveals that
$\lambda_c(L)-c^2/(4d)=\lambda_0(L)$, which tends to $0$ as $L\to+\infty$. 
Hence, since $0<c<c_{\KPP}=2\sqrt{df'(0)}$,
$$\lim_{L\to\infty}\lambda_c(L)=\frac{c^2}{4d}<f'(0).$$
As a consequence, for $L$ large enough, 
after suitably normalizing $\Phi_c$ and extending it by $0$ outside $B_{N+1}(0,L)$,
we have that
\begin{equation}\label{PhiL}
(\un{u}(x,t),\un{v}(x,y,t))=(0,\Phi_c(z+ct,x',y-L-1))
\end{equation}
is a generalized subsolution to \eqref{I4}.

The existence of these subsolutions immediately implies that solutions to 
\eqref{I4} cannot spread with a speed slower than $c^*_{\infty}$. It actually implies more: reasoning as in Section \ref{section3}, one can obtain the Liouville-type result for \eqref{I4} and eventually infers that
$c^*_{\infty}$ is actually the spreading speed for such problem.

%
\vspace{0.3cm}

We now go back to our problem, showing the convergence
of the propagation speed for \eqref{I1} to the one for \eqref{I4} as $R\to+\infty$.

\begin{proof}[Proof of the second limit in \eqref{I8}.]
We start with the case $D\leq 2d$. We know from \eqref{cases} 
that  $c^*(R)=c^*_1(R)$ in this case.
We need to show that $\lim_{R\to+\infty}c^*_1(R)=c_{\KPP}$. Using the 
log-concavity of $\phi_1$ as in Section \ref{sec:waves}, we find that the 
function $R\mapsto\a_{D}^+(c,\b)$ is decreasing in its domain of definition and 
that $R\mapsto\a_{D}^-(c,\b)$ 
is increasing. Since $\a_{d}^\pm$ do not depend on $R$, this implies that 
$c^*_1(R)$ is increasing and that the limit for $R\to+\infty$ 
exists.

Fix $0<c<c_{\KPP}$. 
For $L$ large enough, the pair $(\un{u},\un{v})$ defined before by \eqref{PhiL}
is a generalized subsolution to \eqref{I4}, and its support is contained in a 
closed cylinder with radius $L$ which does not intersect the
boundary with fast diffusion. Therefore, if $R>L$, after a suitable translation,
we can have this support contained
in the interior of the cylinder $\Omega$ and then get a subsolution to 
\eqref{I1} as well.
As usual, fitting this subsolution below a given solution to \eqref{I1} at time $1$
and using the comparison principle, one derives $c^*_1(R)\geq c$.
Since this holds for any $0<c<c_{\KPP}$, the proof in the case $D\leq 2d$ is achieved.

Passing to the case $D>2d$, we know from \eqref{cases} 
that $c^*=c^*_2$ for large $R$. Moreover, \eqref{510} entails that the curve 
$\a_D^+$ introduced in \eqref{413} converges locally uniformly to the curve 
$\a_D^\infty$ from \eqref{521}. 
By continuity, for any given $c>c^*_{\infty}$, 
the curve $\a_D^\infty$ is strictly secant to the half-circle $\a_d^\pm$, and hence the same is true for $\a_D^+$, provided $R$ is 
large enough. We deduce from the construction of $c^*$ performed in Section 
\ref{sec:waves} that
\begin{equation*}
\limsup_{R\to+\infty}c^*(R)\leq c^*_\infty.
\end{equation*}
On the other hand, if $c<c^*_\infty$, $c\sim c^*_\infty$, since the curve $\a_D^\infty$ lies at a positive distance from $\a_d^\pm$, so does $\a_D^+$ for large $R$ and, therefore, no intersection can occur, which gives
\begin{equation*}
\liminf_{R\to+\infty}c^*(R)\geq c^*_\infty
\end{equation*}
and completes the proof.
\end{proof}

We conclude this section with the proof of the monotonicities and the rest of Theorem \ref{Th12}\eqref{Th12ii}. 

\begin{proof}[Conclusion of the Proof of Theorem \ref{Th12}\eqref{Th12ii}]
We have seen in the proof of the second limit of \eqref{I8} 
that $R\mapsto c^*_1(R)$ is increasing. Conversely, the log-convexity of 
$\psi_2$ implies that $R\mapsto\a_{D}^+$, 
considered for $\b<0$, is increasing and therefore $R\mapsto c^*_2(R)$ 
decreases. With these informations in hand, to conclude the proof we only need 
to determinate whether $c^*=c^*_1$ (type 1) or $c^*=c^*_2$ (type 2) 
using the conditions \eqref{cases}. 

The first condition in \eqref{cases} implies that
$c^*(R)=c^*_1(R)$ if $D\leq 2d$, and thus $c^*(R)$ is increasing in such case.
In the case $D>2d$, we deduce from \eqref{cases} that $c^*(R)=c^*_1(R)$ 
(increasing) if $R<R_M$ and $c^*(R)=c^*_2(R)$ (decreasing) if $R>R_M$, with 
$R_M$ given by \eqref{RM}.

It only remains to prove \eqref{c*max}, which is equivalent to $c^*(R_M)=c_M$. Thanks to \eqref{iff}, it is sufficient to show that $\b^*(R_M)=0$. From \eqref{cases} we have that $\b^*(R)<0$ for $R>R_M$ and $\b^*(R)>0$ for $R<R_M$. As a consequence, up to subsequences (recall that the region of the $(\b,\a)$ plane where the tangency is possible is bounded), we have that there exist $\b^+\leq 0$ and $\b^-\geq 0$ such that
\begin{equation*}
\lim_{R\to R_M^{\pm}}\b^*(R)=\b^\pm.
\end{equation*}
We deduce that tangency occur at both $\b^+$ and $\b^-$ and, therefore, Proposition \ref{Pr41} entails that $\b^+=\b^-=0$, which completes the proof.
\end{proof}

\setcounter{equation}{0}
\begin{appendices}
\renewcommand{\theequation}{A.\arabic{equation}}

\section{Existence of high-order complex zeros for a class of holomorphic functions}
\label{app}

In this section we generalize the theorem given in \cite[Appendix B]{BRR1}, which proves the existence of complex roots for functions which are, in a neighborhood of a fixed real root, perturbations of polynomials of degree $2$ which depend on a parameter. Here we consider functions which behave like polynomials of higher (even) degree and our result is used in the construction of generalized subsolutions in Section \ref{sec:sub}. The proof is based on a parametric version of Rouch\'e's theorem and is similar to the one in \cite[Proof of Proposition 5.1]{T}, however we present it here for the sake of completeness and since we use some slightly different estimates. The result reads like follows.

\begin{theorem}
\label{Thapp}
Assume $h:\R^2\to\R$ is analytic in a neighborhood of $(\xi,\tau)=(0,0)$ 
and that there exists $n\in\N\setminus\{0\}$ such that
\begin{gather}
\label{hyp1}
h(0,0)=0, \qquad \frac{\p^j h}{\p\tau^j}(0,0)=0 \quad \text{ for $1\leq j\leq
2n-1$}, \\
\label{hyp2}
\left(\frac{\p^{2n} h}{\p\tau^{2n}}(0,0)\right)\left(\frac{\p 
h}{\p\xi}(0,0)\right)<0.
\end{gather}
Then, the holomorphic extension of $h$ 
is such that, for $\xi<0$, $\xi\sim0$, the equation
\begin{equation}
\label{eqn}
h(\xi,\cdot)=0
\end{equation}
admits a complex root $\tau=\tau(\xi)$ which satisfies, for some $k>1$ 
independent of $\xi$, 
\begin{equation}
\label{properties}
|\repart\tau(\xi)|\leq k|\xi|^{\frac{1}{2n}}, \qquad 
k^{-1}|\xi|^{\frac{1}{2n}}\leq\impart\tau(\xi)\leq k|\xi|^{\frac{1}{2n}}.
\end{equation}
Moreover, if $n\geq 2$, $k>1$ can be chosen so that 
\begin{equation}
\label{properties2}
k^{-1}|\xi|^{\frac{1}{2n}}<\repart\tau(\xi).
\end{equation}
\end{theorem}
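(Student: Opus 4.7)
The plan is to isolate the two leading monomials in the Taylor expansion of $h$ at the origin and to find complex roots of \eqref{eqn} by a suitable rescaling of the $\tau$-variable. By \eqref{hyp1}, the Taylor series of $h$ at $(0,0)$ contains neither a constant term nor the monomials $\tau, \tau^2,\dots,\tau^{2n-1}$, and by \eqref{hyp2} the coefficients $a:=\partial_\xi h(0,0)$ and $b:=\frac{1}{(2n)!}\partial_\tau^{2n}h(0,0)$ are both nonzero and satisfy $ab<0$. I would therefore write
\[
h(\xi,\tau)=a\xi+b\tau^{2n}+r(\xi,\tau),
\]
where the remainder $r$ only collects monomials $c_{ij}\xi^i\tau^j$ which are either of degree $\ge 2$ in $\xi$ (with $j=0$), or contain at least one factor of $\xi$ together with at least one factor of $\tau$, or are pure powers $\tau^j$ with $j\ge 2n+1$.

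For $\xi<0$, I would introduce the scaling $\sigma:=\tau/|\xi|^{1/(2n)}$ and work with
\[
H(\xi,\sigma):=\frac{h(\xi,|\xi|^{1/(2n)}\sigma)}{|\xi|}=-a+b\sigma^{2n}+\frac{r(\xi,|\xi|^{1/(2n)}\sigma)}{|\xi|}.
\]
A term-by-term inspection, using that $\xi^i/|\xi|=(-1)^i|\xi|^{i-1}$ for $\xi<0$, shows that every monomial in the rescaled remainder has the form $(\mathrm{const})\cdot|\xi|^{i-1+j/(2n)}\sigma^j$ with exponent $i-1+j/(2n)>0$ in each of the three classes above. Exploiting the absolute convergence of the Taylor series of $h$ in a polydisk around $(0,0)$, I would sum and bound these terms uniformly for $\sigma$ ranging over any compact subset of $\C$, concluding that
\[
H(\xi,\cdot)\xrightarrow[\xi\to 0^-]{} H_0(\sigma):=-a+b\sigma^{2n}\qquad\text{locally uniformly in }\C.
\]

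Since $a/b<0$, the limit polynomial $H_0$ has $2n$ \emph{distinct} complex roots
\[
\sigma_k=\Big|\tfrac{a}{b}\Big|^{1/(2n)}\exp\!\Big(i\tfrac{(2k+1)\pi}{2n}\Big),\qquad k=0,\dots,2n-1.
\]
I would single out $\sigma^*:=\sigma_0$, whose argument $\pi/(2n)$ lies in $(0,\pi/2]$, so that $\impart\sigma^*>0$ for every $n\ge 1$ and $\repart\sigma^*>0$ exactly when $n\ge 2$. Since $\sigma^*$ is a simple zero of $H_0$ and $\sigma\mapsto H(\xi,\sigma)$ is holomorphic in $\sigma$, a parametric version of Rouché's theorem applied on a small disk $\{|\sigma-\sigma^*|<\eta\}$ yields, for every $\xi<0$ small enough, a unique zero $\sigma(\xi)$ of $H(\xi,\cdot)$ in that disk, and moreover $\sigma(\xi)\to\sigma^*$ as $\xi\to 0^-$. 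Setting $\tau(\xi):=|\xi|^{1/(2n)}\sigma(\xi)$ then produces a solution of \eqref{eqn}.

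The quantitative bounds \eqref{properties} and \eqref{properties2} should follow directly from the expansion $\tau(\xi)=|\xi|^{1/(2n)}(\sigma^*+o(1))$ as $\xi\to 0^-$: separating real and imaginary parts, one reads off both the upper estimate $|\tau(\xi)|\le k|\xi|^{1/(2n)}$ and the matching lower estimates for $\impart\tau(\xi)$ always, and for $\repart\tau(\xi)$ whenever $n\ge 2$, using $\sin(\pi/(2n))>0$ and $\cos(\pi/(2n))>0$ respectively. The main technical obstacle I anticipate is the uniform control of the rescaled remainder $r(\xi,|\xi|^{1/(2n)}\sigma)/|\xi|$ on a compact disk around $\sigma^*$: this requires the majorant method for convergent power series and is the only point that really distinguishes the present case $n\ge 2$ from the quadratic case treated in \cite[Appendix B]{BRR1}.
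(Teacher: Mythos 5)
Your argument is correct and follows essentially the same route as the paper's proof: the same decomposition $h=a\xi+b\tau^{2n}+r$, the same explicit roots of the model polynomial, and the same application of a parametric Rouch\'e theorem near the root of argument $\pi/(2n)$, from which \eqref{properties} and \eqref{properties2} are read off. The only difference is cosmetic: you rescale $\sigma=\tau/|\xi|^{1/(2n)}$ and work on a fixed disk around $\sigma^*$, whereas the paper stays in the $\tau$-variable and applies Rouch\'e on the shrinking ball $B(\tau_0,|\xi|^\delta)$ with $\delta\in\bigl(\tfrac1{2n},\tfrac1n\bigr)$; the remainder estimate you flag as the main obstacle is exactly the bound $O\bigl(|\xi|^{1+\frac{1}{2n}}\bigr)$ versus $k|\xi|^{\delta+1-\frac1{2n}}$ carried out there.
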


\begin{proof}
Thanks to the analyticity and \eqref{hyp1} we have that, in a (complex) neighborhood of $(0,0)$, $h$ admits a Taylor expansion of the type
\begin{equation*}
h(\xi,\tau)=a_{2n}\tau^{2n}+a_1\xi+h_1(\xi,\tau)\xi+O(\tau^{2n+1})
\end{equation*}
where
\begin{equation}
\label{eq5}
a_1=\frac{\p h}{\p\xi}(0,0), \qquad a_{2n}=\frac{1}{(2n)!}\frac{\p^{2n} 
h}{\p\tau^{2n}}(0,0), 
\qquad h_1=O(|\xi|+|\tau|)
\end{equation}
As a consequence, equation \eqref{eqn} is equivalent to
\begin{equation*}
h_2(\xi,\tau):=a_{2n}\tau^{2n}+a_1\xi=-h_1(\xi,\tau)\xi+O(\tau^{2n+1}).
\end{equation*}
Thanks to \eqref{hyp2}, $h_2(\xi,\cdot)=0$ admits, for $\xi<0$,  
$2n$ complex solutions of the form $\tau_j=\tau_j(\xi)=K(\xi)e^{i	
\frac{(2j+1)}{2n}\pi}$, $j\in\{0,\dots,2n-1\}$, with 
$K(\xi):=\left(\frac{a_1\xi}{a_{2n}}\right)^{\frac{1}{2n}}$. 
The distance between two roots $\tau_j$, $\tau_l$, $j\neq l$, satisfies 
$|\tau_j-\tau_l|\geq\vartheta K(\xi)$, for some constant $\vartheta$ only 
depending on $n$. 
Take $\d>\frac{1}{2n}$.
It follows that, for $|\xi|$ small enough, the unique 
solution of $h_2(\xi,\tau)=0$ 
in the ball of center $\tau_0$ and radius $|\xi|^{\d}$, denoted by $B(\tau_0,|\xi|^\delta)$, is $\tau=\tau_0$.
Moreover, for $\tau\in\p B(\tau_0,|\xi|^\delta)$ we have
\begin{equation*}
\label{rouche1}
|h_2(\xi,\tau)|=a_{2n}|\tau-\tau_0|\prod_{j=1}^{2n-1}|\tau-\tau_{j}|\geq
a_{2n}|\xi|^{\d}(\vartheta 
K(\xi)-|\xi|^\delta)^{2n-1}\geq k|\xi|^{\d+1-\frac{1}{2n}}(1+o(1))
\end{equation*}
as $\xi\to0$, for some constant $k>0$. On the other hand, again for $\tau\in 
\partial B(\tau_0,|\xi|^\delta)$, recalling the last relation of \eqref{eq5},
\begin{equation*}
\label{rouche2}
|h_1(\xi,\tau)\xi+O(\tau^{2n+1})|\!\leq\!
O\big(|\xi|+|\xi|^{\frac1{2n}}\big)|\xi|+O\big(|\xi|^{1+\frac1{2n}}
\big)=O\big(|\xi|^{1+\frac1{2n}}\big)
\end{equation*}
as $\xi\to0$.
As a consequence, taking $(\frac{1}{2n}<)\;\d<\frac{1}{n}$, we have that,
for $|\xi|$ small enough, 
$|h_1(\xi,\tau)\xi+O(\tau^{2n+1})|<|h_2(\xi,\tau)|$ on $\p B(\tau_0,|\xi|^\delta)$. 
Applying Rouch\'e's theorem, we obtain that $h(\xi,\cdot)$ has the same number 
of roots inside $B(\tau_0,|\xi|^\delta)$ as $h_2(\xi,\cdot)$, i.e., one.
We call it $\tau(\xi)$. Relations \eqref{properties} and \eqref{properties2} immediately follow 
since
$$
\repart \tau(\xi)\in[\repart \tau_0-|\xi|^\delta,\repart \tau_0+|\xi|^\delta)], 
\qquad \impart \tau(\xi)\in[\impart \tau_0-|\xi|^\delta,\impart \tau_0+|\xi|^\delta] 
$$
and $\repart \tau_0=K(\xi)\cos(\frac{\pi}{2n})$, $\impart \tau_0=K(\xi)\sin(\frac{\pi}{2n})$.
\end{proof}


\end{appendices}

\section*{Acknowlegments}
This work has been supported by the spanish Ministry of Economy and Competitiveness under project MTM2012-30669 and grant BES-2010-039030, by the European Research
Council under the European Union's Seventh Framework Programme (FP/2007-2013) /
ERC Grant Agreement n.321186 - ReaDi \lq\lq Reaction-Diffusion E\-qua\-tions, Propagation and
Modelling", by the Institute of Complex Systems of Paris \^Ile-de-France under DIM 2014 \lq\lq Diffusion heterogeneities in different spatial dimensions and applications to ecology and medicine", by the Research Project \lq\lq Stabilità  asintotica di fronti per equazioni paraboliche" of
the University of Padova (2011),  ERC Grant 277749 EPSILON \lq\lq Elliptic
Pde's and Symmetry of Interfaces and Layers for Odd Nonlinearities" and PRIN Grant 201274FYK7
\lq\lq Aspetti variazionali e perturbativi nei problemi differenziali 
nonlineari".


\begin{thebibliography}{10}

\bibitem{AW}
D.G. Aronson, H.F. Weinberger, \emph{Multidimensional nonlinear diffusion arising in population
              genetics}, Adv. in Math. \textbf{30} (1978), 33--76.
              
\bibitem{Bao} T.Q. Bao, K. Fellner and E. Latos, \emph{Well-posedness and exponential equilibration of a volume-surface reaction-diffusion system with nonlinear boundary coupling}, Preprint, {\tt arXiv:1404.2809v1 [math.AP]}.

\bibitem{BRR1}
H. Berestycki, J.-M. Roquejoffre, L. Rossi, \emph{The influence of a line with fast diffusion in Fisher-KPP propagation}, J. Math. Biology \textbf{66} (2013), 743--766.

\bibitem{BRR2}
H. Berestycki, J.-M. Roquejoffre, L. Rossi, \emph{Fisher-KPP propagation in the presence of a line: further effects}, Nonlinearity \textbf{29} (2013), 2623--2640.

\bibitem{BRR3}
H. Berestycki, J.-M. Roquejoffre, L. Rossi, \emph{The shape of expansion induced by a line with fast diffusion in Fisher-KPP equations}, Preprint {\tt arXiv:1402.1441 [math.AP]}.

\bibitem{BL}
H.J. Brascamp, E.H. Lieb, \emph{On extensions of the {B}runn-{M}inkowski and
              {P}r\'ekopa-{L}eindler theorems, including inequalities for
              log concave functions, and with an application to the
              diffusion equation}, J. Functional Analysis \textbf{22} (1976), 366--389.

\bibitem{Faria}
N.R. Faria et al., \emph{The early spread and epidemic ignition of HIV-1 in human populations}, Science \textbf{346} (2014), 56--61.

\bibitem{F}
R.A. Fisher, \emph{The wave of advantage of advantageous genes}, Annals of Eugenics \textbf{7} (1937) 355--369.

\bibitem{KPP}
A.N. Kolmogorov, I.G. Petrovskii, N.S. Piskunov, \emph{\'Etude de l'\'equation de la diffusion avec croissance de la quantit\'e de matière et son application à  un problème biologique}, Bull. Univ. \'Etat Moscou, S\'er. Intern. A \textbf{1} (1937), 1--26.


\bibitem{MCV}
A. Madzvamuse, A.H.W. Chung and C. Venkataraman, \emph{Stability analysis and simulations of coupled bulk-surface
reaction-diffusion systems}, Preprint {\tt arXiv:1501.05769v1 [math.AP]}.

\bibitem{McK}
H.W. McKenzie, E.H. Merrill, R.J. Spiteri, M.A. Lewis, \emph{How linear features alter predator movementand the functional response}, Interface focus, \textbf{2} (2012), 205--216.
   
\bibitem{N}
E. Neuman, \emph{Inequalities involving modified {B}essel functions of the first kind}, J. Math. Anal. Appl. \textbf{171} (1992), 532--536.        

\bibitem{T}
A. Tellini, \emph{Propagation speed in a strip bounded by a line with different diffusion}, Preprint {\tt arXiv:1412.2611 [math.AP]}.

\bibitem{W}
G.N. Watson, A Treatise on the Theory of Bessel Functions, Cambridge University Press, Cambridge, 1944.
\end{thebibliography}
\end{document}